\newcommand{%
    \fontsize{7pt}{9pt}\selectfont
    \def\svgwidth{1\columnwidth}
    \import{./figure/}{.pdf_tex}
}[2][1]{%
    \fontsize{7pt}{9pt}\selectfont
    \def\svgwidth{#1\columnwidth}
    \import{./figure/}{#2.pdf_tex}
}
\newtheorem{theorem}{Theorem}[section]
\newtheorem{corollary}[theorem]{Corollary}
\newtheorem{lemma}[theorem]{Lemma}
\newtheorem{proposition}[theorem]{Proposition}
\newtheorem{definition}[theorem]{Definition}
\newtheorem{example}[theorem]{Example}
\newtheorem*{xrem}{Remark}
\title[The asymptoticity of pairs of Teichm\"uller rays] {The asymptoticity of pairs of Teichm\"uller rays}
\author[G. Hu]{Guangming Hu}
\address [Guangming Hu] {College of Science, Nanjing University of Posts and Telecommunications, Nanjing, 210003, P.R. China}
\email{20230210@njupt.edu.cn}
\author[Z. Lyu]{Zhiyang Lyu}
\address[Zhiyang Lyu] {School of Mathematical Sciences, University of Science and Technology of China, 96 Jinzhai Road, 230026, Hefei, Anhui, P. R. China}
\email{lyuzhiyang@ustc.edu.cn}
\author[H. Miyachi]{Hideki Miyachi}
\address [Hideki Miyachi] {School of Mathematics and Physics, College of Science and Engineering, Kanazawa University, Kakuma-machi, Kanazawa, Ishikawa, 920-1192, Japan}
\email{miyachi@se.kanazawa-u.ac.jp}
\author[Y. Qi]{Yi Qi}
\address [Yi Qi] {School of Mathematics and Systems Science, Beihang University, Beijing, 100191, P. R. China}
\email{yiqi@buaa.edu.cn}
\subjclass[2020]{30F60, 32G15, 57K20, 57M15.}
\keywords{Teichm\"uller space, Teichm\"uller distance, Teichm\"uller ray.}
\thanks{This work is partially supported by NSFC Grant Numbers 12101275, 12271017, the China Scholarship Council (CSC) Grant Number 202306020157 and JSPS KAKENHI Grant Numbers 20H01800, 20K20519, 22H01125.}
\begin{document}

\baselineskip=16pt
\parskip=2pt

\begin{abstract}
	  
    In this paper, we study the limit of Teichm\"uller distance between two points along a pair of Teichm\"uller rays. We obtain an explicit formula for the limiting Teichm\"uller distance when the vertical measured foliations of the quadratic differentials are finite sums of weighted simple closed curves and uniquely ergodic measures. The limit is expressed in terms of ratios of the corresponding moduli and the Teichm\"uller distance between the limit surfaces when the vertical measured foliations are absolutely continuous. Consequently, two Teichm\"uller rays are asymptotic if and only if their vertical measured foliations are modularly equivalent and their limit surfaces coincide, which implies a main result of Masur on the asymptoticity of Teichm\"uller rays determined by uniquely ergodic quadratic differentials.  Furthermore, we prove that the infimum of the limiting Teichm\"uller distances can be represented in terms of the distance between the limit surfaces of the Teichm\"uller rays and the detour metric of their endpoints on the Gardiner-Masur boundary, when the initial points of the rays vary along the Teichm\"uller geodesics.
    
\end{abstract}

\maketitle




\section{Introduction}\label{Introduction}

Let $S$ be a Riemann surface of genus $g$ with $n$ punctures ($3g-3+n\geq 1$). The Teichm\"uller space $\mathcal{T}(S)$ of $S$ is the space of all marked Riemann surfaces up to Teichm\"uller equivalence. There is a natural metric $d_{\mathcal{T}}(\cdot,\cdot)$ on $\mathcal{T}(S)$, called Teichm\"uller metric. It is an important problem in history that whether the Teichm\"uller metric is of negative curvature or not. In 1975,  Masur \cite{Masur1975} showed that Teichm\"uller metric does not have negative curvature in the sense of Busemann. Moreover, Masur and  Wolf \cite{MW1995} proved in 1994 that Teichm\"uller space, equipped with Teichm\"uller metric is not Gromov hyperbolic. 

The proof of Masur in \cite{Masur1975} is based on a key result discovered by him that the Teichm\"uller distance between two Teichm\"uller geodesic rays is bounded if the corresponding vertical measured foliations are Jenkins-Strebel and topologically equivalent. This result drew out the study on the  asymptotic behavior of Teichm\"uller geodesic rays.

In 1980, Masur \cite{Mas1980} further showed that two Teichm\"uller rays are asymptotic (and therefore bounded) if the corresponding vertical measured foliations are uniquely ergodic and topologically equivalent without simple closed curve formed by  saddle connections. In 2001, Ivanov \cite{Iva2001} showed that two Teichm\"uller rays are bounded if the vertical measured foliations are absolutely continuous, and divergent if the geometric intersection of the vertical measured foliations is nonzero. In 2010, Lenzhen and Masur \cite{LM2010} proved that two Teichm\"uller rays are divergent if the vertical measured foliations are not topologically equivalent or topologically equivalent but not absolutely continuous. 

In the studying of EDM rays and the Deligne-Mundford Compactification in 2010, Farb and Masur \cite{FM2010} showed that the limit of Teichm\"uller distance  between points along two EDM (Jenkins-Strebel) rays in the moduli space exists and equals to the distance between their endpoints on the boundary of augmented moduli space. Consequently, the rays are asymptotic if their endpoints coincide. 
In 2014, Amano \cite{Ama2014, Ama2014-TheAsym} investigated the limit of Teichm\"uller distance  between points along two Jenkins-Strebel rays in Teichm\"uller space, simply called limiting Teichm\"uller distance below, and obtained an explicit formula of the limiting distance. By the formula of the limiting Teichm\"uller distance, he further showed that two Jenkins-Strebel rays are asymptotic if and only if the measured foliations are modularly equivalent and the endpoints of these rays are the same in the augmented Teichm\"uller space.

Based on the work of Amano \cite{Ama2014, Ama2014-TheAsym}, Lenzhen and Mausur \cite{LM2010} and Ivanov \cite{Iva2001}, One may naturally propose the following problem.

\medskip
\noindent{\bf  Problem:} {\em For any two absolutely continuous  Teichm\"uller geodesic rays in the Teichm\"uller space, does the limit of Teichm\"uller distance between points along these rays exist? Is there also an explicit formula for the limiting Teichm\"uller distance? }
\medskip

The main goal of this paper is to study this problem. We give an affirmative answer for pairs of Teichm\"uller gedesic rays whose corresponding vertical measured foliations can be expressed as finite sums of weighted simple closed curves and uniquely ergodic measures. 

To introduce our main results, we need some simple preparations and notions. It is known that the  end point or limit surface of a Jenkins-Strebel ray is a noded Riemann surface in the augmented Teichm\"uller space. The limit surface of a general Teichm\"uller ray was studied and referred to as the conformal limit of the Teichm\"uller ray by Gupta \cite{Gup2019} recently. For a Teichm\"uller ray $\mathcal{R}_{q,X}(t)$ induced by a unit norm holomorphic quadratic differential $q$ on $X\in\mathcal{T}(S)$, the conformal limit is a disjoint union of punctured Riemann surfaces. Each of these surfaces is associated with a connected component of the finite critical graph $\Gamma_q$ and is formed by attaching half planes and semi-infinite cylinders to the critical graph of $q$. These surfaces have infinite area under the singular flat metrics determined by $q$, which are referred to as half-plane structures (see \cite{Gup2014} and \cite{GW2016}). The limit surface of a Teichm\"uller ray can be understood as the pointed Gromov-Hausdorff limit by choosing a set of singularities from each connected component of $\Gamma_q$ as  basepoints. 
In this paper, we provide a detail construction of the limit surface and prove that the Teichm\"uller ray convergents to it in the pointed Gromov-Hausdorff sense. Furthermore we define the Teichm\"uller distance $d_{\overline{\mathcal{T}}}$ between the limit surfaces (See \S\ref{limit surfaces} for details).  

Now we can state our main result as follows precisely.

\begin{theorem}\label{TheLimitDistance}
Let $\mathcal{R}_{q,X}(t)$ and $\mathcal{R}_{q^\prime,Y}(t)$ be two Teichm\"uller rays, and let $V(q)$ and $V(q^\prime)$ $(H(q)$ and $H(q^\prime))$ denote the vertical $($horizontal$)$ measured foliations induced by quadratic differentials $q$ and $q^\prime$, respectively. Suppose $V(q)$ can be expressed as $V(q)=\sum_{j=1}^{N}a_jG_j$, where $a_j$ is a positive number and $G_j$ is either a simple closed curve or a uniquely ergodic measure.
\begin{itemize}
\item[(i)] If  $V(q)$ and $V(q^\prime)$ are absolutely continuous $($i.e. $V(q^\prime)=\sum_{j=1}^{N}b_{j}G_{j}$ with $b_j>0$$)$, then the limiting Teichm\"uller distance exists and
$$
\lim_{t\to\infty}d_{\mathcal{T}}(X_t,Y_t)=\max\left\{\frac{1}{2}\log\max_{1\leq j\leq N}\left\{\frac{m^\prime_j}{m_j},\frac{m_j}{m^\prime_j}\right\},d_{\overline{\mathcal{T}}}(X_\infty,Y_\infty)\right\},
$$
where $m_j=\frac{a_j}{i(G_j,H(q))}$ and $m^\prime_j=\frac{b_j}{i(G_j,H(q^\prime))}$ are the modulus of $G_j$ on $X$ and $Y$, $X_\infty$ and $Y_\infty$ are the limit surfaces of $\mathcal{R}_{q,X}(t)$ and $\mathcal{R}_{q^\prime,Y}(t)$, respectively.
\item[(ii)] Otherwise, 
$$
\lim_{t\to\infty}d_{\mathcal{T}}(X_t,Y_t)=+\infty.
$$
\end{itemize}
\end{theorem}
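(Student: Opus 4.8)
The plan is to base everything on Kerckhoff's formula
$$
d_{\mathcal{T}}(X_t,Y_t)=\frac{1}{2}\log\sup_{\gamma}\frac{\operatorname{Ext}_{Y_t}(\gamma)}{\operatorname{Ext}_{X_t}(\gamma)},
$$
the supremum being over simple closed curves $\gamma$, and to analyse the asymptotics of the individual ratios $\operatorname{Ext}_{Y_t}(\gamma)/\operatorname{Ext}_{X_t}(\gamma)$ as $t\to\infty$. The guiding principle is that, since both rays are absolutely continuous, $X_t$ splits into a \emph{thin part} (the cylinders swept out by the simple-closed-curve components $G_j$ of $V(q)$, together with the subsurfaces filled by the uniquely ergodic components) and a \emph{thick part} that converges, in the pointed Gromov--Hausdorff sense constructed in \S\ref{limit surfaces}, to the limit surface $X_\infty$; likewise for $Y_t$. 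Correspondingly I would single out three families of test curves: curves crossing a single component $G_j$, the cores $G_j$ themselves, and curves supported in the thick part. I expect the first two families to produce the modulus term and the third to produce $d_{\overline{\mathcal{T}}}(X_\infty,Y_\infty)$.

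For the lower bound I would compute the three asymptotics explicitly, extending Amano's Jenkins--Strebel analysis \cite{Ama2014}. Along $\mathcal{R}_{q,X}(t)$ the modulus of the cylinder around $G_j$ (or the corresponding extended modulus of the uniquely ergodic subsurface) grows like $e^{2t}m_j$; hence $\operatorname{Ext}_{X_t}(G_j)\sim e^{-2t}/m_j$, while a curve $\gamma$ crossing $G_j$ has $\operatorname{Ext}_{X_t}(\gamma)\sim i(\gamma,G_j)^2 e^{2t}m_j$, the uniquely ergodic case being exactly where Masur's unique ergodicity is needed to pin down the constant. Evaluating the same curves on $Y_t$, the cores give ratios $m_j/m_j'$ and the crossing curves give $m_j'/m_j$, so optimizing over $j$ yields
$$
\liminf_{t\to\infty}d_{\mathcal{T}}(X_t,Y_t)\ge \frac{1}{2}\log\max_{1\le j\le N}\Bigl\{\frac{m_j'}{m_j},\frac{m_j}{m_j'}\Bigr\}.
$$
For a curve $\gamma$ contained in the thick part I would use $\operatorname{Ext}_{X_t}(\gamma)\to\operatorname{Ext}_{X_\infty}(\gamma)$ and $\operatorname{Ext}_{Y_t}(\gamma)\to\operatorname{Ext}_{Y_\infty}(\gamma)$, so that Kerckhoff's formula on the limit surfaces gives $\liminf_t d_{\mathcal{T}}(X_t,Y_t)\ge d_{\overline{\mathcal{T}}}(X_\infty,Y_\infty)$; the larger of the two lower bounds is the claimed quantity.

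The essential difficulty is the matching upper bound, namely showing that no curve does asymptotically better and that $\sup_\gamma$ may be interchanged with $\lim_t$. I would decompose an arbitrary $\gamma$ along the thin/thick splitting and bound $\operatorname{Ext}_{X_t}(\gamma)$ below by the sum of the crossing contributions $\sum_j i(\gamma,G_j)^2 e^{2t}m_j$, the thick-part contribution comparable to $\operatorname{Ext}_{X_\infty}(\gamma)$, and the twisting contributions around each cylinder; the key elementary lemma is that a quotient of sums of nonnegative terms is dominated by the maximum of the quotients of corresponding terms, so that $\operatorname{Ext}_{Y_t}(\gamma)/\operatorname{Ext}_{X_t}(\gamma)$ is controlled term by term by the two competing quantities above. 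The delicate point is uniformity in $\gamma$: for a fixed curve the twisting around each $G_j$ is bounded and its contribution $\sim i(\gamma,G_j)^2(\text{twist})^2 e^{-2t}/m_j$ is of lower order, but to exchange $\sup_\gamma$ with $\lim_t$ I would show the supremum is asymptotically realized by curves of uniformly bounded combinatorial type, reducing to finitely many comparisons at each large $t$. This is the step I expect to be the main obstacle, and where the pointed Gromov--Hausdorff convergence of the thick parts together with continuity of extremal length must be used most carefully.

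Finally, for part (ii) I would argue that the limit is infinite precisely when absolute continuity fails. If $V(q)$ and $V(q')$ are not topologically equivalent, or topologically equivalent but not absolutely continuous, divergence follows from Lenzhen--Masur \cite{LM2010}, while if $i(V(q),V(q'))\neq 0$ it follows from Ivanov \cite{Iva2001}; under the standing hypothesis that each $G_j$ is a simple closed curve or a uniquely ergodic measure, unique ergodicity forces topological equivalence to coincide with absolute continuity, so these cases exhaust ``otherwise''. Alternatively I would argue directly: when $V(q')$ carries an essential component absent from $V(q)$, a curve crossing it but disjoint from the support of $V(q)$ has $\operatorname{Ext}_{Y_t}(\gamma)\to\infty$ while $\operatorname{Ext}_{X_t}(\gamma)$ stays bounded, forcing $d_{\mathcal{T}}(X_t,Y_t)\to+\infty$.
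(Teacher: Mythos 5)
Your lower bound is essentially sound and, for the modulus term, coincides with the paper's route: Lemma \ref{LowerEstimate-Modulus} applies Kerckhoff's formula (Theorem \ref{Kerckhoff'sFormula}) together with Walsh's asymptotics $e^{-2t}\mathrm{Ext}_{X_t}(\mathcal{F})\to\mathcal{E}_{q,X}(\mathcal{F})^2$ (Theorem \ref{limitofExt}, Lemma \ref{ratioofE}), which is exactly your ``cores and crossing curves'' computation packaged as a single statement. For the limit-surface term you diverge from the paper: you want $\mathrm{Ext}_{X_t}(\gamma)\to\mathrm{Ext}_{X_\infty}(\gamma)$ for thick-part curves and then Kerckhoff on each $X_{\infty,i}$, whereas Lemma \ref{LowerEstimate-LimitingSurfaces} instead extracts a limiting quasiconformal map $f_\infty:X_\infty\to Y_\infty$ from the Teichm\"uller maps $f_t$ by a normal-families/diagonalization argument, using the pointed Gromov--Hausdorff convergence and an extremal-length argument to show $f_t(X_{t_0,i})$ stays inside $Y_{t,i}$. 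Your version is workable but you must actually prove the one-sided inequality $\mathrm{Ext}_{Y_t}(\gamma)\geq\mathrm{Ext}_{Y_\infty}(\gamma)-o(1)$ (monotonicity under the exhaustion only gives the other direction), which requires showing the optimal cylinder for $\gamma$ cannot shortcut through the pieces of $Y_t$ separated from $Y_{t,i}$ by annuli of growing modulus.

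The genuine gap is the upper bound, and it is precisely the step you flag as ``the main obstacle.'' A term-by-term comparison of extremal lengths via the quotient-of-sums lemma requires an additive decomposition of $\mathrm{Ext}_{X_t}(\gamma)$ and $\mathrm{Ext}_{Y_t}(\gamma)$ into matching pieces with multiplicative error $1+o(1)$, \emph{uniformly over all} $\gamma$ (or all $\mathcal{F}\in\mathcal{MF}(S)$, since Kerckhoff's supremum is over measured foliations, not just curves of bounded combinatorial type). Minsky-style product-region estimates give such decompositions only up to uniform multiplicative or additive constants, which is enough for coarse statements but not for the exact limit claimed in Theorem \ref{TheLimitDistance}; and your proposed reduction to ``curves of uniformly bounded combinatorial type'' is asserted without a mechanism. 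The paper avoids this entirely by constructing an explicit quasiconformal map: Lemma \ref{TheGluingMapping} uses the first-return-map rectangle decomposition and Birkhoff's ergodic theorem (this is where unique ergodicity enters) to build affine maps $R^{\sigma}_{t,j}\to R^{\sigma'}_{t,j}$ with dilatation $\max_j\{m_j'/m_j,\,m_j/m_j'\}+O(\varepsilon)$ that match along the critical graph, and Lemma \ref{UpperEstimate} then glues these to the Teichm\"uller maps $f_{\infty,i}$ between the limit surfaces near each puncture, controlling the dilatation of the interpolating annulus map by the frame mapping theorem (Theorem \ref{FrameMappingThm}) and the Reich--Strebel main inequality (Theorem \ref{MainInequality}). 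Without either that construction or a genuinely uniform extremal-length decomposition, your argument establishes only the lower bound. Part (ii) as you state it is fine and matches the paper's citation of Ivanov and Lenzhen--Masur.
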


Theorem \ref{TheLimitDistance} is a generalization of the main result in \cite{Ama2014-TheAsym}, since the Teichm\"uller distance between the limit surfaces defined here is equal to that in \cite{Ama2014-TheAsym} for Jenkins-Strebel rays (see \S\ref{Teichmuller rays} and \S\ref{limit surfaces} for details).

Furthermore, we obtain a necessary and sufficient condition for the asymptoticity of two Teichm\"uller rays.

\begin{corollary}\label{TheAsymptoticCondition}
    Under the assumption of Theorem \ref{TheLimitDistance}, the Teichm\"uller rays $\mathcal{R}_{q,X}(t)$ and $\mathcal{R}_{q^\prime,Y}(t)$ are asymptotic if and only if the vertical measured foliations $V(q)$ and $V(q^\prime)$ are modularly equivalent and $X_\infty=Y_\infty$. 
\end{corollary}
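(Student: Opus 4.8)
The plan is to read the corollary off directly from the explicit formula in Theorem~\ref{TheLimitDistance}, since the two rays are asymptotic precisely when $\lim_{t\to\infty}d_{\mathcal{T}}(X_t,Y_t)=0$. So the whole statement is an unpacking of the question ``when does the limiting Teichm\"uller distance vanish?''. First I would dispose of the non-absolutely-continuous case: by part (ii) of Theorem~\ref{TheLimitDistance}, if $V(q)$ and $V(q')$ fail to be absolutely continuous then the limiting distance is $+\infty$, so the rays are not asymptotic; since modular equivalence presupposes absolute continuity (the two foliations must be supported on the same $G_j$), neither side of the claimed equivalence holds in this case. This reduces both directions to the situation $V(q')=\sum_{j=1}^{N}b_jG_j$ with $b_j>0$, where the formula of part (i) applies.

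Next I would analyze when the maximum in part (i) vanishes. Set $A=\tfrac12\log\max_{1\le j\le N}\{m'_j/m_j,\,m_j/m'_j\}$ and $B=d_{\overline{\mathcal{T}}}(X_\infty,Y_\infty)$; both quantities are nonnegative, so $\max\{A,B\}=0$ if and only if $A=0$ and $B=0$. Because $\max\{x,1/x\}\ge 1$ with equality exactly when $x=1$, the condition $A=0$ is equivalent to $m_j=m'_j$ for every $j$, which together with absolute continuity is precisely the statement that $V(q)$ and $V(q')$ are modularly equivalent (cf.\ \S\ref{limit surfaces}). The condition $B=0$ is equivalent to $X_\infty=Y_\infty$. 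Combining these, the limiting distance is zero if and only if $V(q)$ and $V(q')$ are modularly equivalent and $X_\infty=Y_\infty$, which is exactly the asserted characterization.

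The step requiring genuine care is the equivalence $B=0\iff X_\infty=Y_\infty$, since this uses that $d_{\overline{\mathcal{T}}}$ is a true metric on limit surfaces rather than merely a pseudometric; in particular it must separate distinct limit surfaces. I would justify this non-degeneracy from the construction of $d_{\overline{\mathcal{T}}}$ in \S\ref{limit surfaces}. Apart from this point there is no real obstacle: once the definition of modular equivalence is identified with ``absolutely continuous with $m_j=m'_j$ for all $j$'', the corollary follows from Theorem~\ref{TheLimitDistance} with no further computation.
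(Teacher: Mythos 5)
Your proposal establishes the wrong equivalence because it silently replaces two of the paper's definitions with stronger ones, and the two replacements happen to be consistent with each other, which masks the problem.

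First, in this paper ``asymptotic'' does \emph{not} mean $\lim_{t\to\infty}d_{\mathcal{T}}(X_t,Y_t)=0$. It means
$\lim_{t\to\infty}\inf_{Y'\in\mathcal{R}_{q',Y}(t)}d_{\mathcal{T}}(X_t,Y')=0$, equivalently that there exists a shift $\sigma\in\mathbb{R}$ with $d_{\mathcal{T}}(X_t,Y_{t+\sigma})\to 0$ (see \S\ref{Teichmuller rays}). Second, ``modularly equivalent'' does not mean $m_j=m'_j$ for all $j$; it means $m_j=C\,m'_j$ for some positive constant $C$ independent of $j$. Your argument proves the statement ``$d_{\mathcal{T}}(X_t,Y_t)\to 0$ if and only if $m_j=m'_j$ for all $j$ and $X_\infty=Y_\infty$,'' which is a genuinely different claim: for instance, two rays with $m'_j=2m_j$ for every $j$ and the same limit surface are asymptotic and modularly equivalent, but your criterion would reject them since $A=\tfrac12\log 2>0$.

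The repair is exactly the extra ingredient the paper uses: shifting the starting point of $\mathcal{R}_{q',Y}$ by $\sigma$ multiplies every modulus $m'_j$ by $e^{2\sigma}$ while leaving the limit surface $Y_\infty$ and the topological/absolute-continuity data unchanged. Applying Theorem \ref{TheLimitDistance} to the shifted ray, the rays are asymptotic if and only if there exists $\sigma$ with $e^{2\sigma}m'_j=m_j$ for all $j$ and $d_{\overline{\mathcal{T}}}(X_\infty,Y_\infty)=0$; the existence of such a $\sigma$ is precisely the statement that the ratio $m_j/m'_j$ is independent of $j$, i.e.\ modular equivalence, and conversely $\sigma=-\tfrac12\log C$ works. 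Your dispatch of the non-absolutely-continuous case via part (ii) is fine (it survives shifting, since the vertical foliations of a shifted ray are unchanged up to scale), and your remark that one must know $d_{\overline{\mathcal{T}}}$ separates points is a fair observation, though the paper itself treats that point no more carefully than you do.
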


Moreover, we recover the main result of Masur in \cite{Mas1980} by Corollary \ref{TheAsymptoticCondition}. Masur showed the asymptotic behavior of Teichm\"uller rays determined by uniquely ergodic measured foliations, under the notable condition that there are no simply closed curves consisting of saddle connections. From our construction of the limit surface for a Teichm\"uller ray, this condition implies that the limit surface is a disjoint union of punctured spheres. Thus, their asymptoticity follows directly from Corollary \ref{TheAsymptoticCondition}.

\begin{corollary}[Masur \cite{Mas1980}]\label{Masur'sTheorem}
    Let $\mathcal{R}_{q,X}(t)$ be a Teichm\"uller ray in Teichm\"uller space of genus $g>1$. The vertical measured foliation $V(q)$ is uniformly ergodic on $X$ and the finite critical graph $\Gamma_q$ contains no simple closed curves. Then for any $Y$ not on $\mathcal{R}_{q,X}(t)$, there is a Teichm\"uller ray through $Y$ asymptotic to $\mathcal{R}_{q,X}(t)$.
\end{corollary}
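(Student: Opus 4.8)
The plan is to produce the desired asymptotic ray by applying Corollary \ref{TheAsymptoticCondition}, so that the whole statement reduces to two verifications: that the vertical foliations are modularly equivalent, and that the two limit surfaces coincide. Since $V(q)$ is uniquely ergodic, in the notation of Theorem \ref{TheLimitDistance} we are in the case $N=1$ with $V(q)=a_1G_1$ and $G_1$ a single uniquely ergodic measure. Given a surface $Y$ not lying on $\mathcal{R}_{q,X}(t)$, I would invoke the Hubbard--Masur theorem to choose the holomorphic quadratic differential $q'$ on $Y$ whose vertical measured foliation $V(q')$ is proportional to $G_1$; this is the natural candidate, and the induced ray $\mathcal{R}_{q',Y}(t)$ depends on $q'$ only up to the positive scaling that I will fix below.

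With this choice, modular equivalence is immediate: $V(q)$ and $V(q')$ are both supported on the single uniquely ergodic class $G_1$, hence absolutely continuous, and with only one component there is a single modulus ratio $m_1/m_1'$, which is trivially constant; rescaling $q'$ I can even arrange $m_1=m_1'$. Thus the first entry $\tfrac12\log\max\{m_1'/m_1,m_1/m_1'\}$ in the formula of Theorem \ref{TheLimitDistance}(i) vanishes, and the limiting Teichm\"uller distance equals $d_{\overline{\mathcal{T}}}(X_\infty,Y_\infty)$.

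The heart of the argument, and the step I expect to be the main obstacle, is to show $X_\infty=Y_\infty$. Here I would return to the explicit construction of the limit surface in \S\ref{limit surfaces}. The semi-infinite cylinders of the half-plane structure arise from closed leaves of the foliation, that is, from simple closed curves in $\Gamma_q$, and it is the moduli of these cylinders that record the dependence of the limit surface on the initial conformal structure. The hypothesis that $\Gamma_q$ contains no simple closed curves removes them entirely, so each connected component of $X_\infty$ is assembled only from half-planes glued along a component of the finite critical graph, which is now a forest. I would argue from the construction that such a purely half-plane component is a punctured sphere whose conformal type is governed by the combinatorial and transverse data of $G_1$ alone, with no surviving continuous modulus (the edge lengths of the critical graph being fixed by the natural coordinate $\int\sqrt{q}$ up to an overall scaling). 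Because $V(q)$ and $V(q')$ present the same foliation $G_1$, the same description applies to every component of $Y_\infty$, and the two disjoint unions of punctured spheres are identified, giving $d_{\overline{\mathcal{T}}}(X_\infty,Y_\infty)=0$.

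Finally I would record the conclusion: modular equivalence together with $X_\infty=Y_\infty$ is exactly the criterion of Corollary \ref{TheAsymptoticCondition}, so $\mathcal{R}_{q',Y}(t)$ is asymptotic to $\mathcal{R}_{q,X}(t)$, while the hypothesis that $Y$ does not lie on the original ray guarantees that this produces a genuinely distinct ray through $Y$. The most delicate point to make rigorous is the rigidity claim above --- that in the absence of closed-curve saddle connections the conformal type of the limit surface is a function of the vertical foliation only --- which is precisely where the full strength of the limit-surface construction, rather than just Theorem \ref{TheLimitDistance}, enters.
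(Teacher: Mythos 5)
Your proposal is correct and follows essentially the same route as the paper: invoke Hubbard--Masur to produce $q'$ on $Y$ with vertical foliation in the class of $G_1$, note that modular equivalence is automatic since $N=1$, observe that the absence of simple closed curves in $\Gamma_q$ forces every component of the limit surface to be a punctured sphere, and conclude via Corollary \ref{TheAsymptoticCondition}. The ``delicate rigidity point'' you flag at the end is actually vacuous: since each $\Gamma_{q,i}$ is a tree, $\chi(X_{\infty,i})=\chi(\Gamma_{q,i})=1$, so each component is a once-punctured sphere (simply connected with one puncture), its Teichm\"uller space is a single point, and $d_{\overline{\mathcal{T}}}(X_\infty,Y_\infty)=0$ by definition --- no argument about the conformal type being determined by the transverse data of $G_1$ is needed.
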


Since the limiting distance depends on ratios of the moduli determined by the holomorphic quadratic differentials on the initial points, we can consider the infimum of the limiting distances when the initial points shift along the Teichm\"uller rays. It is shown that the infimum is represented by the detour metric $\delta$ between the end points of the Theich\"uller rays on the Gardiner-Masur boundary of $\mathcal{T}(S)$ and the distance between their limit surfaces.

\begin{proposition}\label{InfimumofLimitDistance}
    Under the assumption of Theorem \ref{TheLimitDistance}, if the vertical measured foliations $V(q)$ and $V(q^\prime)$ are absolutely continuous, then by shifting the starting points of $\mathcal{R}_{q,X}(t)$ and $\mathcal{R}_{q^\prime,Y}(t)$, the minimum of the limiting distances is 
    $$
    \max\left\{\frac{1}{2}\delta(\hat{\mathcal{E}}_{q,X},\hat{\mathcal{E}}_{q^\prime,Y}),d_{\overline{\mathcal{T}}}(X_\infty,Y_\infty)\right\},
    $$
    where $\delta$ is the detour metric and $\hat{\mathcal{E}}_{q,X}$, $\hat{\mathcal{E}}_{q^\prime,Y}$ are the end points of $\mathcal{R}_{q,X}(t)$ and $\mathcal{R}_{q^\prime,Y}(t)$ on the Gardiner-Masur boundary of $\mathcal{T}(S)$, respectively.
\end{proposition}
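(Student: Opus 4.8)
The plan is to reduce the statement to Theorem~\ref{TheLimitDistance} by analyzing how the two ingredients of the limiting distance — the ratio of moduli and the distance between limit surfaces — transform when the base points slide along the geodesics, and then to identify the optimized ratio-term with the detour metric via the known description of the Gardiner--Masur endpoint of a Teichm\"uller ray.

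First I would record the effect of a time-shift on the data. Replacing the starting point $X$ by $X_s = \mathcal{R}_{q,X}(s)$ amounts to re-marking by the Teichm\"uller map of the ray, under which the terminal differential has vertical and horizontal foliations $V(q_s) = e^{s}V(q)$ and $H(q_s) = e^{-s}H(q)$ (up to the fixed sign convention of the flow). Since $V(q) = \sum_j a_j G_j$, the weights become $e^{s}a_j$ while $i(G_j, H(q_s)) = e^{-s} i(G_j, H(q))$, so every modulus is scaled by the same factor, $m_j \mapsto e^{2s} m_j$; likewise $m'_j \mapsto e^{2s'} m'_j$ for a shift $s'$ of the second ray. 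Crucially, the conformal limit surface depends only on the forward tail of the geodesic and is therefore unchanged by a shift, so $d_{\overline{\mathcal{T}}}(X_\infty, Y_\infty)$ is constant throughout. Writing $r_j = m'_j/m_j$ and $c = e^{2(s'-s)}$, Theorem~\ref{TheLimitDistance} gives the limiting distance of the shifted pair as
$$
\max\left\{\tfrac12\log\max_{1\le j\le N}\max\{c\,r_j,(c\,r_j)^{-1}\},\; d_{\overline{\mathcal{T}}}(X_\infty,Y_\infty)\right\},
$$
where $c$ ranges over all of $(0,\infty)$ as $s,s'$ vary.

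Next I would carry out the optimization. Because the second entry is constant and the maximum is monotone, $\inf_c\max\{A(c),B\} = \max\{\inf_c A(c),B\}$, so it suffices to minimize the first entry. Setting $x=\log c$ and $\ell_j=\log r_j$, the first entry equals $\tfrac12\max_j|x+\ell_j|$, an elementary one-variable problem whose minimum is attained at $x=-\tfrac12(\max_j\ell_j+\min_j\ell_j)$ with value
$$
\tfrac14\bigl(\max_j\log r_j - \min_j\log r_j\bigr) = \tfrac14\log\frac{\max_j(m'_j/m_j)}{\min_j(m'_j/m_j)}.
$$
Thus the minimum of the limiting distances equals $\max\{\tfrac14\log(\max_j r_j/\min_j r_j),\, d_{\overline{\mathcal{T}}}(X_\infty,Y_\infty)\}$, and it remains to show that the first term is exactly $\tfrac12\delta(\hat{\mathcal{E}}_{q,X},\hat{\mathcal{E}}_{q',Y})$.

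The final and main step is this identification, and it is where the work lies. I would use the explicit description of the Gardiner--Masur endpoint of a Teichm\"uller ray: under the present hypotheses $\hat{\mathcal{E}}_{q,X}$ is represented by the function $\mathcal{E}_{q,X}\colon\alpha \mapsto (\sum_j m_j\, i(\alpha,G_j)^2)^{1/2}$ on $\mathcal{MF}$, which comes from the asymptotics $e^{-2t}\mathrm{Ext}_{X_t}(\alpha)\to \sum_j m_j\,i(\alpha,G_j)^2$ along the ray (the cylinder, respectively minimal component, associated to $G_j$ has modulus $e^{2t}m_j$ at time $t$), and similarly for $\hat{\mathcal{E}}_{q',Y}$ with the $m'_j$. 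Feeding these representatives into the formula for the detour metric on the Gardiner--Masur boundary, namely the symmetrized logarithmic sup-ratio $\delta = \log\big(\sup_\alpha \tfrac{\mathcal{E}_{q',Y}(\alpha)}{\mathcal{E}_{q,X}(\alpha)}\big)+\log\big(\sup_\alpha \tfrac{\mathcal{E}_{q,X}(\alpha)}{\mathcal{E}_{q',Y}(\alpha)}\big)$, each supremum is a ratio of positive combinations of the $i(\alpha,G_j)^2$ and is therefore maximized by concentrating $\alpha$ on a single component $G_j$. Since the $G_j$ occupy disjoint subsurfaces such test foliations genuinely exist — this is where unique ergodicity of the non-periodic $G_j$ is used, to guarantee that intersection with one ergodic component can be isolated — giving $\sup_\alpha \mathcal{E}_{q',Y}/\mathcal{E}_{q,X}=\max_j\sqrt{m'_j/m_j}$ and the reciprocal supremum $=\max_j\sqrt{m_j/m'_j}$. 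Hence $\delta=\tfrac12\log(\max_j r_j/\min_j r_j)$, which is twice the optimized ratio-term, completing the proof. The main obstacle is precisely this last step: verifying the localization of the two suprema and fixing the correct normalization of the detour metric relative to the $\sqrt{\mathrm{Ext}}$-embedding; the scaling and optimization steps are routine once these formulas are in place.
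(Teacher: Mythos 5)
Your proposal is correct and follows essentially the same route as the paper: reduce to Theorem \ref{TheLimitDistance}, observe that a shift of the starting points rescales all moduli by a common factor while leaving the limit surfaces (and the detour metric) unchanged, optimize the ratio term over that factor, and identify the optimum with $\tfrac12\delta(\hat{\mathcal{E}}_{q,X},\hat{\mathcal{E}}_{q^\prime,Y})$. The only difference is cosmetic: the paper quotes Amano's formula $\delta=\tfrac12\log\max_j\tfrac{m_j^\prime}{m_j}+\tfrac12\log\max_j\tfrac{m_j}{m_j^\prime}$ (Proposition \ref{DetourMetric}) and exhibits the explicit optimal shift $\sigma$, whereas you re-derive that formula from the sup-ratio description of the detour cost (your localization of the suprema is exactly Lemma \ref{ratioofE}) and phrase the optimization as a one-variable $\min$-$\max$ in $\log$ coordinates; both yield the same value.
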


This paper is organized as follows. In section $2$, we recall some relevant background, notions and basic results on Teichm\"uller spaces, quadratic differentials, measured foliations and Teichm\"uller rays. In section $3$, we provide a concrete construction of the limit surface of a Teichm\"uller ray in details and prove that the Teichm\"uller ray convergents to it in the pointed Gromov-Hausdorff sense. Furthermore we define the Teichm\"uller distance $d_{\overline{\mathcal{T}}}$ between limit surfaces. In section $4$, we give the upper estimate of the limiting Teichm\"uller distance by constructing quasiconformal mappings. In section $5$, we give the lower estimate of the limiting Teichm\"uller distance and complete proofs of Theorem \ref{TheLimitDistance}, Corollary \ref{TheAsymptoticCondition} and \ref{Masur'sTheorem}. In section $6$, we prove Proposition \ref{InfimumofLimitDistance}.

\section{Preliminaries}

\subsection{Teichm\"uller spaces}

Let $S$ be a Riemann surface of genus $g$ with $n$ punctures such that $3g-3+n\geq 1$. A \emph{marked Riemann surface} denoted by $(X,f)$ is a pair of a Riemann surface $X$ and a quasiconformal mapping $f:S\to X$ called the \emph{marking} of $X$. Two marked Riemann surfaces $(X_1,f_1)$ and $(X_2,f_2)$ are \emph{Teichm\"uller equivalent} if there is a conformal mapping $h:X_1\to X_2$ such that $f_2$ is homotopic to $h\circ f_1$. The \emph{Teichm\"uller space} $\mathcal{T}(S)$ of $S$ is the space of all Teichm\"uller equivalent classes $[X,f]$ containing $(X,f)$. We will use the Riemann surface $X$ to denote the $[X,f]\in\mathcal{T}(S)$ for simplicity. There is a complete metric called \emph{Teichm\"uller metric} $d_{\mathcal{T}}$ on $\mathcal{T}(S)$. For any two $X_1, X_2\in\mathcal{T}(S)$, the Teichm\"uller distance is defined by 
$$
d_{\mathcal{T}}(X_1,X_2)=\frac{1}{2}\inf_{h}\{\log K(h)\},
$$
where the infimum is over all quasiconformal mapping $h:X_1\to X_2$ such that $f_2$ is homotopic to $h\circ f_1$, and $K(h)$ is the maximal quasiconformal dilatation of $h$.

A \emph{noded Riemann surface} $R$ is a connected Hausdorff space with a set $P$ of finitely many distinguished points such that each connected component of $R\setminus P$ is a Riemann surface of finite type, and each point $p_k\in P$ called a \emph{node} of $R$ has a neighborhood which is biholomorphic to 
$$
\{(z,w)\in\mathbb{C}^2\mid zw=0,|z|<1,|w|<1\},
$$
where $p_k$ is mapped to $(0,0)\in\mathbb{C}^2$. It is clear that a Riemann surface $X$ is a noded Riemann surface without nodes.

The \emph{augmented Teichm\"uller space} $\widehat{\mathcal{T}}(S)$ is the space of all Teichm\"uller equivalent classes $[R,f]$ of marked noded Riemann surface $(R,f)$, where $R$ is a noded Riemann surface, and $f:S\to R$ is a continuous mapping such that some disjoint simple closed curves on $S$ are contracted to the nodes of $R$, and $f$ is a homeomorphism on the complement of the simple closed curves. Two noded Riemann surfaces $(R_1,f_1)$ and $(R_2,f_2)$ are Teichm\"uller equivalent if there is a homeomorphism $h:R_1\to R_2$ such that $h\circ f_1$ is homotopic to $f_2$, where the restriction of $h$ to a component of $R_1\setminus\{\text{nodes of }R_1\}$ onto a component of $R_2\setminus\{\text{nodes of }R_2\}$ is conformal (see \cite{Abi1977}).

\subsection{Quadratic differentials}

A \emph{quadratic differential} $q$ on a Riemann surface $X$ is a tensor of the form $q(z)dz^2$ where $q(z)$ is a function of a local coordinate on $X$. We call $q$ a \emph{holomorphic quadratic differential} when $q(z)$ is a holomorphic function with at most simple poles at the punctures of $X$. The zeros and poles of $q$ are called the \emph{critical points} of $q$, and others are called the \emph{regular points} of $q$. For a holomorphic quadratic differential $q$, there are finitely many critical points of $q$ on $X$, and the norm $\|q\|=\iint_X |q|dxdy$ is finite. A holomorphic quadratic differential $q$ is called that of unit norm if $\|q\|=1$.

If a maximal smooth arc $z=\gamma(t)$ on $X$ satisfies $q(\gamma(t))\gamma^\prime(t)^2>0$, the arc is a \emph{horizontal trajectory} of $q$, and the arc is a \emph{vertical trajectory} of $q$ if it satisfies $q(\gamma(t))\gamma^\prime(t)^2<0$. A \emph{critical trajectory} of $q$ is either a vertical trajectory connecting two critical points of $q$ or a vertical trajectory with an endpoint at a critical point of $q$. Let $\widetilde{\Gamma}_q$ be the union of critical points, punctures, critical trajectories and vertical trajectories with endpoints at the punctures on $X$, which is called the \emph{critical graph} of $q$. The set of critical points, punctures and vertical trajectories connecting critical or punctures on $X$ is denoted by $\Gamma_q$ and is called the \emph{finite critical graph} of $q$. The finite critical graph $\Gamma_q$ is a subset of the critical graph $\widetilde{\Gamma}_q$.

For a holomorphic quadratic differential $q$, It is known that the components of $X\setminus\Gamma_q$ consist of finitely many cylinders and minimal domains, where each cylinder is swept out by simple closed vertical trajectories of $q$, and a minimal domain is a domain on $X$ in which all vertical trajectories are dense. A quadratic differential is called a \emph{Jenkins-Strebel differential} if the components of $X\setminus\Gamma_q$ are all cylinders.

\subsection{Measured foliations}

A \emph{measured foliation} $(\mathcal{F},\mu)$ on surface $S$ is a singular foliation $\mathcal{F}$ with transverse measure $\mu$. Let $\mathcal{S}$ be the set of homotopic classes of non-trivial and non-peripheral simple closed curves on $S$. We can define the intersection number of a measured foliation $(\mathcal{F},\mu)$ and a $\alpha\in\mathcal{S}$ as
$$
i((\mathcal{F},\mu),\alpha)=\inf_{\alpha^\prime\in\alpha}\int_{\alpha^\prime}d\mu,
$$
where the infimum is taken over all simple closed curves $\alpha^\prime$ in $\alpha$. Two measured foliations $(\mathcal{F}_1,\mu_1)$ and $(\mathcal{F}_2,\mu_2)$ are equivalent if 
$$
i((\mathcal{F}_1,\mu_1),\alpha)=i((\mathcal{F}_2,\mu_2),\alpha)
$$
holds for all $\alpha\in\mathcal{S}$. Let $\mathcal{F}=[\mathcal{F},\mu]$ be the equivalent class containing $(\mathcal{F},\mu)$, and We denoted by $\mathcal{MF}(S)$ the space of equivalent classes of measure foliations on $S$. The space $\mathcal{MF}(S)$ has the weak topology induced by the intersection number functions in $\mathbb{R}^{\mathcal{S}}_{\geq 0}$. The set of weighted simple closed curves $\mathbb{R}_{\geq 0}\otimes\mathcal{S}$ is dense in $\mathcal{MF}(S)$. Then the intersection number can extend continuously to an intersection function on $\mathcal{MF}(S)\times\mathcal{MF}(S)$ (cf. \cite{Bon1986}, \cite{Bon1988} and \cite{Ree1981}).

For a holomorphic quadratic differential $q$ on Riemann surface $X$, each regular point of $q$ has a canonical coordinate $z=x+iy$ such that $q=dz^2$ in the coordinate, and the vertical trajectory through the regular point is a vertical line in the canonical coordinate. There is a \emph{vertical measured foliation} $V(q)$ determined by $q$ on $X$, where the singular foliation of $V(q)$ is formed by the vertical trajectories of $q$ and the transverse measure is induced by $|dx|$. The \emph{singularities} of $V(q)$ are the critical points of $q$ and the punctures on $X$. The vertical trajectories of $q$ are called the \emph{leaves} of $V(q)$, and the vertical trajectories joining two critical or punctures are called the \emph{saddle connections} of $V(q)$. Similarly, there is also a \emph{horizontal measured foliation} $H(q)$ on $X$ induced by $q$. Hubbard and Masur \cite{HM1979} showed that for each measured foliation $[\mathcal{F},\mu]\in\mathcal{MF}(X)$, there exists a holomorphic quadratic differential $q$ on $X$ such that $V(q)\in [\mathcal{F},\mu]$.

The vertical measured foliation $V(q)$ on a minimal component $\Omega$ of $X\setminus\Gamma_q$ can be represented as 
$$
V(q)\big|_\Omega=\sum_{i=1}^{p}b_i\mu_i,
$$
where $b_i\geq 0$ and $\{\mu_i\}$ is a set of projectively-distinct ergodic transverse measures. The $p$ is bounded, which depends only on the topology of the surface $X$. The transverse measure of $V(q)$ on a minimal component $\Omega$ is said to be \emph{uniquely ergodic} if it is unique up to scalar multiplication. The restriction of $V(q)$ to a cylinder $A$ in $X\setminus\Gamma_q$ can be represented as $V(q)\big|_A=b\alpha$, where $b>0$ is the height of the cylinder $A$, and $\alpha$ is a simple closed curve on $A$ which is homotopic to the closed leaf of $V(q)$ sweeping out the cylinder $A$. Thus, the vertical measured foliation $V(q)$ on $X$ can be written as 
$$
V(q)=\sum_{j=1}^{N}b_jG_j,
$$
where $G_j$ is a simple closed curve or an ergodic measure on $X$. When $G_j$ is an ergodic measure $\mu_j$, for simplicity, we also consider $G_j$ as the corresponding singular foliation $G_j$ with the ergodic measure $\mu_j$ on $X$.

Let $V(q)$ be a vertical measured foliation on a Riemann surface $X=[X,f_1]\in\mathcal{T}(S)$ and $V(q^\prime)$ be a vertical measured foliation on a Riemann surface $Y=[Y,f_2]\in\mathcal{T}(S)$. The measured foliations $V(q)$ and $V(q^\prime)$ are \emph{topologically equivalent} if there is a homeomorphism $h:X\setminus\Gamma_q\to Y\setminus\Gamma_{q^\prime}$ such that $h$ is homotopic to the mapping $f_2\circ f_1^{-1}$ restricting to $X\setminus\Gamma_q$, and $h$ takes the leaves of $V(q)$ to the leaves of $V(q^\prime)$. We say that the measured foliations $V(q)$ and $V(q^\prime)$ are \emph{absolutely continuous} if they are topologically equivalent and if we can write the measured foliations $V(q^\prime)$ and $h_\ast(V(q))$ as 
$$
V(q^\prime)=\sum_{j=1}^{N}b_jG_j, \quad h_\ast(V(q))=\sum_{j=1}^{N}a_jG_j,
$$
where $G_j$ is a simple closed curve or an ergodic measure on $Y$ and $a_j$ and $b_j$ are positive real numbers. For simplicity, we also write $V(q)$ as $V(q)=\sum_{j=1}^{N}a_jG_j$ and consider each $G_j$ as the corresponding simple closed curve or ergodic measure on $X$.

For a vertical measured foliation $V(q)=\sum_{j=1}^{N}a_jG_j$ on $X$, let 
$$
m_j=\frac{a_j}{i(G_j,H(q))},
$$
which is called the \emph{modulus} of $G_j$ on $X$. We say that $V(q)=\sum_{j=1}^{N}a_jG_j$ and $V(q^\prime)=\sum_{j=1}^{N}b_jG_j$ are \emph{modularly equivalent} if for all $j$,
$$
\frac{a_j}{i(G_j,H(q))}=C\frac{b_j}{i(G_j,H(q^\prime))},
$$
where $C$ is a positive constant independent of $j$.

\subsection{Teichm\"uller rays}\label{Teichmuller rays}
A quasiconformal mapping $f$ on $X$ is called a \emph{Teichm\"uller mapping} if the Beltrami coefficient $\mu_f$ is of the form $\mu_f=\frac{K(f)-1}{K(f)+1}\frac{\bar{q}}{|q|}$, where the $q$ is a unit norm holomorphic quadratic differential on $X$.

The \emph{extremal quasiconformal mapping} $g$ between two Riemann surface is a mapping whose dilatation $K(g)$ attains the infimum of the dilatation of quasiconformal mapping homotopic to $g$. Teichm\"uller's theorem states that, for any two surfaces $X, Y\in\mathcal{T}(S)$, there exists a unique extremal quasiconformal mapping between $X$ and $Y$, which is the Teichm\"uller mapping $f$ for a unique unit norm holomorphic quadratic differential $q$ on $X$. Then the dilatation $K(f)$ of Teichm\"uller mapping $f$ realizes the Teichm\"uller distance $d_{\mathcal{T}}(X,Y)$.

Let $q$ be a unit norm holomorphic quadratic differential on $X$ and $f_{q,t}:X\to X_t$ be the Teichm\"uller mapping for $q$. There is a unit norm holomorphic quadratic differential $q_t$ on $X_t$ such that in the canonical coordinate $z=x+iy$ of $q$ and the canonical coordinate of $q_t$, the mapping $f_{q,t}$ is given by 
$$
z\mapsto e^tx+ie^{-t}y,
$$
where $e^t=K(f_{q,t})^{\frac{1}{2}}$. We consider the holomorphic quadratic differential $e^{2t}q_t$ on $X_t$. Thus, the Teichm\"uller mapping $f_{q,t}:X\to X_t$ is given by $z\mapsto e^{2t}x+iy$ in the canonical coordinates of $q$ and $e^{2t}q_t$. Then under the mapping $f_{q,t}$, the leaves of $H(q)$ are stretched by a factor of $e^{2t}$, while the leaves of $V(q)$ remain unchanged.  

The \emph{Teichm\"uller ray} $\mathcal{R}_{q,X}(t)$ induced by a unit norm holomorphic quadratic differential $q$ with initial point $X$ is defined by
\begin{equation*}
    \begin{array}{cccl}
        \mathcal{R}_{q,X}: & \mathbb{R}_{\geq 0} & \to & \mathcal{T}(S) \\
         & t & \mapsto & X_t=f_{q,t}(X), \\
    \end{array}
\end{equation*} 
where $f_{q,t}:X\to X_t$ is the Teichm\"uller mapping for the holomorphic quadratic differential $q$ on $X$.

A Teichm\"uller ray $\mathcal{R}_{q,R}(t)$ is called a \emph{Jenkins-strebel ray} if $q$ is a Jenkins-Strebel differential. A Jenkins-Strebel ray $\mathcal{R}_{q,R}(t)$ on $\mathcal{T}(S)$ converges to a noded Riemann surface $R_\infty$ in $\widehat{\mathcal{T}}(S)$ as $t\to\infty$ (cf. \cite{HS2007}). Let $\mathcal{R}_{q,R}(t)$ and $\mathcal{R}_{q^\prime,R^\prime}(t)$ be two Jenkins-Strebel rays with initial points $R=[R,f]$ and $R^\prime=[R^\prime,f^\prime]$, converging to $R_\infty$ and $R^\prime_\infty$ respectively. Suppose that the measured foliations $V(q)$ and $V(q^\prime)$ are absolutely continuous. There exists a homeomorphism $h:S\setminus f^{-1}(\Gamma_q)\to S\setminus {f^\prime}^{-1}(\Gamma_{q^\prime})$, homotopic to the identity, such that the mapping $f^\prime\circ h\circ f$ maps the leaves of $V(q)$ to the leaves of $V(q^\prime)$. Let $f_\infty:R\to R_\infty$ and $f^\prime_\infty:R^\prime\to R^\prime_\infty$ be two continuous mappings that contract the core curves of the cylinders in $R\setminus\Gamma_q$ and $R^\prime\setminus\Gamma_{q^\prime}$ to the corresponding nodes of $R_\infty$ and $R^\prime_\infty$, respectively. There exists a decomposition of $R_\infty\setminus\{\text{nodes of }R_\infty\}$ given by 
$$
R_\infty\setminus\{\text{nodes of }R_\infty\}=\bigcup_{i=1}^n R_{\infty,i},
$$
where each $R_{\infty,i}$ is a connected component. The surface $R^\prime_\infty\setminus\{\text{nodes of }R^\prime_\infty\}$ admits a corresponding decomposition 
$$
R^\prime_\infty\setminus\{\text{nodes of }R^\prime_\infty\}=\bigcup_{i=1}^n R^\prime_{\infty,i}
$$
satisfying
$$
(f^\prime_\infty\circ f^\prime)\circ h\circ (f_\infty\circ f)^{-1}(R_{\infty,i})=R^\prime_{\infty,i}
$$
for all $i=1,\cdots,n$. The Teichm\"uller distance between $R_\infty$ and $R^\prime_\infty$ is defined as 
$$
d_{\widehat{\mathcal{T}}}(R_\infty,R^\prime_\infty)=\max_{1\leq i\leq n}\frac{1}{2}\log\inf K(h_i),
$$
where the infimum is taken over all quasiconformal mappings $h_i:R_{\infty,i}\to R^\prime_{\infty,i}$ homotopic to the restriction of $(f^\prime_\infty\circ f^\prime)\circ h\circ (f_\infty\circ f)^{-1}$ to $R_{\infty,i}$.

Let $\mathcal{R}_{q,X}(t)$ and $\mathcal{R}_{q^\prime,Y}(t)$ be two Teichm\"uller rays. We call $\mathcal{R}_{q,X}(t)$ and $\mathcal{R}_{q^\prime,Y}(t)$ \emph{divergent} if $d_{\mathcal{T}}(X_t,Y_t)\to +\infty$ as $t\to\infty$. The rays $\mathcal{R}_{q,X}(t)$ and $\mathcal{R}_{q^\prime,Y}(t)$ are \emph{bounded} if there is a constant $M>0$ such that $d_{\mathcal{T}}(X_t,Y_t)<M$ for any $t\geq 0$. If there is 
$$
\lim_{t\to\infty}\inf_{Y^\prime\in\mathcal{R}_{q^\prime,Y}(t)}d_{\mathcal{T}}(X_t,Y^\prime)=0,
$$
$\mathcal{R}_{q,X}(t)$ and $\mathcal{R}_{q^\prime,Y}(t)$ are \emph{asymptotic}. In the asymptotic case, there is a $\sigma\in\mathbb{R}$ such that $d_{\mathcal{T}}(X_t,Y_{t+\sigma})\to 0$ as $t\to\infty$.

\section{The limit surfaces for Teichm\"uller rays}

Let $q$ be a unit norm holomorphic quadratic differential on Riemann surface $X\in\mathcal{T}(S)$ and $\mathcal{R}_{q,X}(t)$ be the Teichm\"uller ray with initial point $X$ induced by $q$. A holomorphic quadratic differential $q$ on a Riemann surface $X$ defines a singular flat metric on the surface, where the singularities are the critical points of $q$ and the punctures of $X$. We equip the surface $X_t\in\mathcal{R}_{q,X}(t)$ with the normalized singular flat metric induced by $e^{2t}q_t$ and discuss the convergence behavior of $X_t$ with the normalized singular flat metric in the Gromov-Hausdorff sense as $t\to\infty$.

\subsection{The rectangular decomposition}\label{TheDecomposition}

Let $\Omega$ be a minimal component of $X\setminus\Gamma_q$. We consider the restriction of the vertical measured foliation $V(q)$
to the region $\Omega$ and select a small horizontal segment $\tau$ along a leaf of $H(q)$ within $\Omega$. This segment $\tau$ is chosen to avoid singularities and to have no intersections with any saddle connection of $V(q)$. We label the two sides of $\tau$ as $\tau_+$ and $\tau_-$. Since each leaf of $V(q)$ is dense in $\Omega$ and there are only finitely many singularities, a leaf leaving a point on $\tau$ from the side $\tau_+$ will either reach a singularity of $V(q)$ or return to $\tau$ on either the $\tau_+$ or $\tau_-$ side. The same holds for a leaf departing from a point on $\tau$ from the $\tau_-$ side. Considering the first return of leaves leaving from $\tau$, we can define a mapping $T:\tau_+\cup\tau_-\to\tau_+\cup\tau_-$. For any $x\in\tau_+\cup\tau_-$, $T(x)$ is the first point where the leaf, starting from $x$, returns to $\tau$. Thus, $\Omega$ decomposes into finitely many rectangles, as shown in Figure \ref{rectangles}. Since $\tau$ contains no singularities and has no intersection with any saddle connections of $V(q)$, all the singularities and saddle connections lie along the vertical edges of the rectangles.

If both vertical edges of a rectangle contain singularities or saddle connections, we split the rectangle into two smaller rectangles of equal width along a leaf of $V(q)$. Then there is a decomposition of $\Omega$ such that $\Omega$ is a union of rectangles $R_i$:
$$
\Omega=R_1\cup R_2\cup\cdots\cup R_m,
$$
where each $R_i$ has only one vertical edge containing singularities or saddle connections.

\begin{figure}[!htpb]
    \centering
    \fontsize{7pt}{9pt}\selectfont
    \def\svgwidth{0.85\columnwidth}
    \import{./figure/}{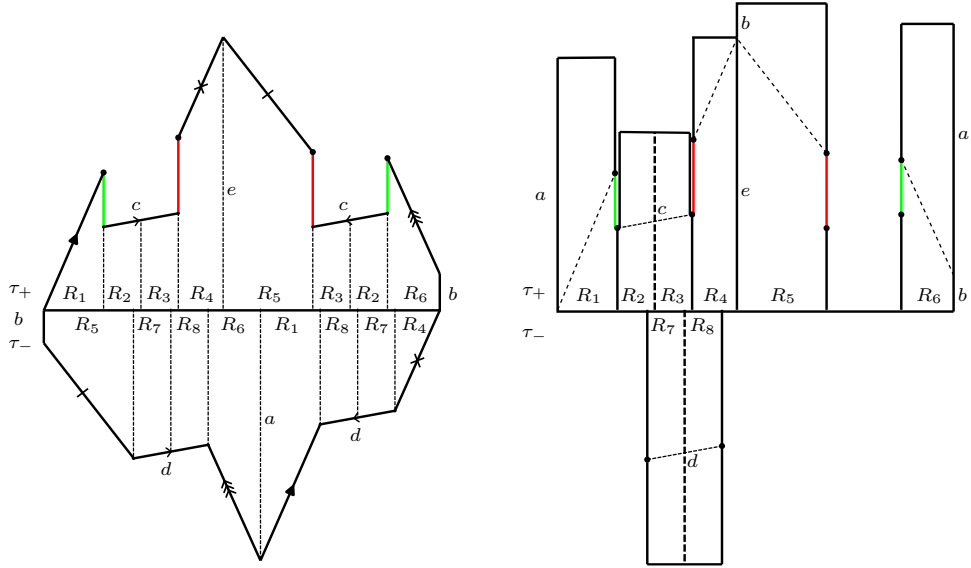}

    \caption{The rectangular decomposition of a Riemann surface of genus $2$ by the first return map on a horizontal segment $\tau$. The rectangles $R_2$ and $R_3$ are formed by splitting a rectangle containing singularities on both vertical edges. The rectangles $R_7$ and $R_8$ are the same case.}
    \label{rectangles}
\end{figure}

Let $A$ be a cylindrical component of $X\setminus\Gamma_q$. The two boundaries of $A$ consist of a finite number of saddle connections of $V(q)$. We divide the cylinder $A$ into two smaller cylinders along a closed leaf of $V(q)$ such that both cylinders have the same height, and each cylinder has only one boundary consisting of saddle connections.

Each of the cylindrical and minimal components of $V(q)$ has a decomposition as described above. Then the Riemann surface $X$ decomposes into finitely many cylinders and rectangles. We glue the cylinders and rectangles along their edges containing singularities or saddle connections, as shown in Figure \ref{decomposition}. Then by gluing the cylinders and rectangles, we obtain a finite number of surfaces with boundaries, which form a decomposition of $X$:
$$
X=X_1\cup X_2\cup\cdots\cup X_n,
$$
where the number $n$ of the subsurfaces depends on the Riemann surface $X$ and $q$. 

Since the cylinders and rectangles are glued along their edges containing singularities or saddle connections, each subsurface $X_i$ in the decomposition of $X$ corresponds to a connected subgraph of the finite critical graph $\Gamma_q$. Therefore, the number $n$ of the subsurfaces is equal to the number of connected subgraph of $\Gamma_q$. Then the finite critical graph $\Gamma_q$ has a decomposition given by:
$$
\Gamma_q=\Gamma_{q,1}\cup\Gamma_{q,2}\cup\cdots\cup\Gamma_{q,n},
$$
where each $\Gamma_{q,i}$ is a connected subgraph of $\Gamma_q$ such that $\Gamma_{q,i}$ is contained in the subsurface $X_i$.

\begin{figure}[!htpb]
    \centering
    \fontsize{7pt}{9pt}\selectfont
    \def\svgwidth{0.95\columnwidth}
    \import{./figure/}{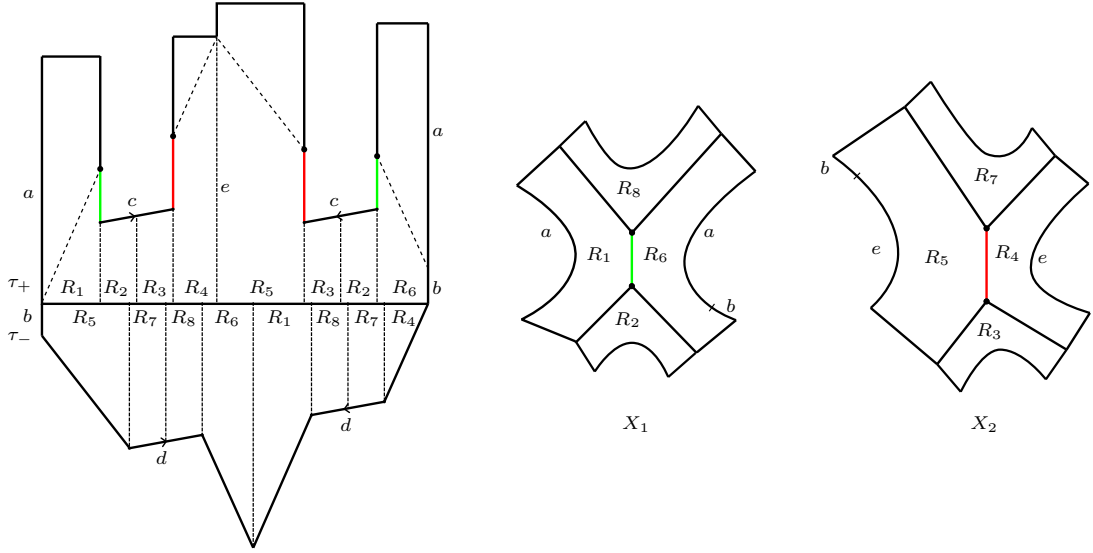}

    \caption{Glue the rectangles along their edges containing singularities. The Riemann surface is divided into two subsurfaces which depend on the connected subgraph of the finite critical graph.}
    \label{decomposition}
\end{figure}

Since the graphs $\Gamma_q$ and $\widetilde{\Gamma}_q$ on $X$ are preserved along the Teichm\"uller ray $\mathcal{R}_{q,X}(t)$, we continue to denote by $\Gamma_q$ and $\widetilde{\Gamma}_q$ the corresponding graphs on the surface $X_t$ along the Teichm\"uller ray $\mathcal{R}_{q,X}(t)$.
For a surface $X_t$ on the Teichm\"uller ray $\mathcal{R}_{q,X}(t)$, the selected horizontal segment $\tau$ in a minimal component of $X\setminus\Gamma_q$ corresponds to a horizontal segment in the corresponding minimal component of $X_t\setminus\Gamma_q$, while the length of the segment on $X_t$ is multiplied by $e^{2t}$ under the normalized singular flat metric induced by $e^{2t}q_t$. For simplicity, we still denote by $\tau$ the corresponding horizontal segment on $X_t$.
Then for the decomposition of $X$, there is an analogous decomposition of $X_t$, with the width of each rectangle and the height of each cylinder are multiplied by $e^{2t}$ under the normalized singular flat metric. The decomposition of $X_t$ is written as follows:
$$
X_t=X_{t,1}\cup X_{t,2}\cup\cdots\cup X_{t,n}.
$$

The decomposition of $X$ depends on the choice of the horizontal segment $\tau$ on each minimal component $\Omega$. We choose a horizontal segment $\tau$ on each minimal component of $X\setminus\Gamma_q$ and obtain a decomposition of $X=\bigcup_{i=1}^{n}X_i$. Then, we can choose a subinterval $\tau_1$ of $\tau$ such that $\tau_1$ avoids containing any endpoints of the vertical edges of the rectangles formed by the first return map on $\tau$, where the vertical edges contain singularities. Therefore, if we consider each critical trajectory starting from a singularity, the first point where the trajectory reaches $\tau$ is not in $\tau_1$. This ensures that, for each critical trajectory satrting from a singularity, the first point at which the trajectory reaches $\tau_1$ lies further along the trajectory, resulting in an increased length for each critical trajectory from a singularity to the first point it hits within $\tau_1$. 

If we glue the cylinders and rectangles formed by the first return map on $\tau_1$ along their edges contain singularities and saddle connections, another decomposition of $X$ is obtained. Similarly, the surface $X_t$ also admits an analogous decomposition related to $\tau_1$. Then we can choose a sufficiently large $t_1>0$ such that, for each rectangle on $X_{t_1}$ formed by the first return map on $\tau_1$, the width of the rectangle exceeds its height. Thus, under the normalized singular flat metrics, the subsurface $X_i$ in the decomposition of $X$ associated with $\tau$ can be isometrically embedded into the corresponding subsurface $X_{t_1,i}$ in the decomposition of $X_t$ associated with $\tau_1$, where the embedding  preserves the edges of the rectangles and cylinders along the critical graph $\widetilde{\Gamma}_q$.

We then choose a subinterval $\tau_2$ of $\tau_1$ in the same way as selecting $\tau_1$ from $\tau$ and a sufficiently large $t_2>t_1$. The widths of the rectangles on $X_{t_2}$ formed by the first return map on $\tau_2$ exceed their heights. Similarly, under the normalized singular flat metrics, the subsurface $X_{t_1,i}$ in the decomposition of $X_{t_1}$ associated with $\tau_1$ can be isometrically embedded into the corresponding subsurface $X_{t_2,i}$ in the decomposition of $X_{t_2}$ associated with $\tau_2$, where the embedding preserves the edges of the rectangles and cylinders along the critical graph $\widetilde{\Gamma}_q$. Therefore, by repeatedly applying this process, we can obtain a sequence $X_{t_k,i}$ along the Teichm\"uller ray $\mathcal{R}_{q,X}(t)$ such that each surface $X_{t_k,i}$ can be isometrically embedded into the surface $X_{t_{k+1},i}$ preserving the edges of the rectangles and cylinders along the critical graph $\widetilde{\Gamma}_q$.

\subsection{The half-plane surfaces}

Consider a finite connected metric graph $G$ which satisfies that:
\begin{itemize}
    \item[(1)] the metric graph $G$ allows loops and multiple edges;
    \item[(2)] the edges with a vertex of degree $1$ are allowed to be of infinite length, while other edges are of finite length.
\end{itemize}
Such a metric graph $G$ is called an admissible metric graph if $G$ satisfies these conditions.

The half plane is the upper half Euclidean plane with boundary $\mathbb{R}$ in $\mathbb{C}$, and the semi-infinite cylinder is a Euclidean cylinder $S^1\times\mathbb{R}_{\geq 0}$ which is holomorphic to $\overline{\mathbb{D}}^\ast=\{z\in\mathbb{C}\mid 0<z\leq 1\}$. Given an admissible metric graph $G$, we can glue half planes and semi-infinite cylinders along the edges of $G$ by isometries on the boundaries. If $G$ has no infinite length edges, we can only glue semi-infinite cylinders along the edges of $G$. This construction forms a surface such that the admissible metric graph $G$ is isometrically embedded in the surface.

\begin{example}
    The graph $G$ in Figure \ref{example} consists of five vertices and six edges, where the edges $a$ and $f$ have infinite length. By gluing two half planes and two semi-infinite cylinders along the edges of $G$, we obtain a surface which is homotopic to a sphere with three punctures.
\end{example}

\begin{figure}[!htpb]
    \centering
    \fontsize{7pt}{9pt}\selectfont
    \def\svgwidth{1\columnwidth}
    \import{./figure/}{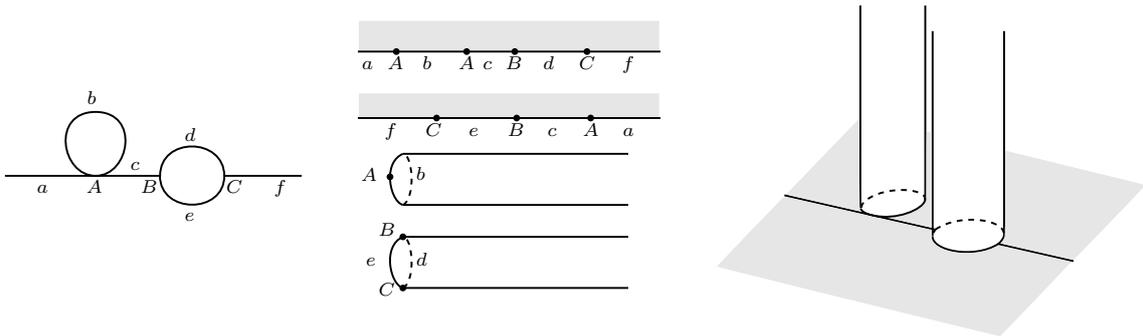}

    \caption{The half-plane surface is formed by gluing two half planes and two semi-infinite cylinders along the edges of the admissible metric graph.}
    \label{example}
\end{figure}

\begin{definition}
    Given an admissible metric graph $G$, if the surface obtained by gluing half planes and semi-infinite cylinders along the edges of $G$ by isometries on the boundaries is orientable, the surface is called a half-plane surface.
\end{definition}

\begin{figure}[!htpb]
    \centering
    \fontsize{7pt}{9pt}\selectfont
    \def\svgwidth{0.8\columnwidth}
    \import{./figure/}{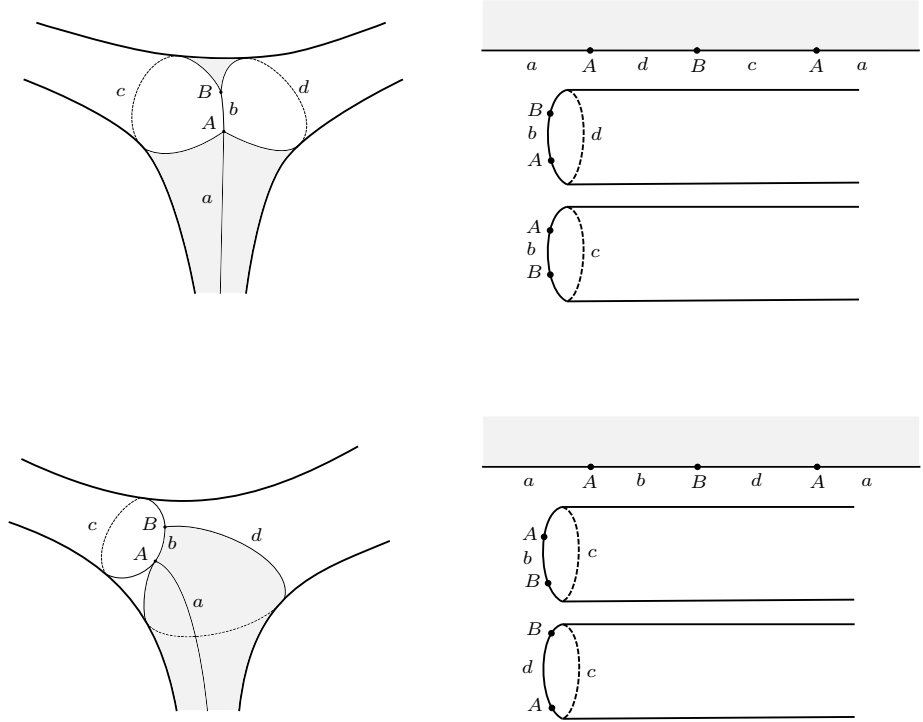}

    \caption{The two half-plane surfaces with the same admissible metric graph are obtained by gluing one half plane and two semi-infinite cylinders.}
    \label{metricgraph}
\end{figure}

Note that there can be multiple half-plane surfaces associated with an admissible metric graph $G$, which implies that $G$ can be embedded isometrically in different half-plane surfaces (see Figure \ref{metricgraph}). The half-plane surface can be endowed with a meromorphic quadratic differential $q$, which is represented as $dz^2$ on each half plane and $\frac{dz^2}{z^2}$ locally on $\overline{\mathbb{D}}^\ast$ for each semi-infinite cylinder. This meromorphic quadratic differential $q$ defines a singular flat structure on the half-plane surface, which uniquely extends across the singularities of $q$ to induce a complex structure on the entire surface. Therefore, the half-plane surface is conformally equivalent to a Riemann surface $X^\ast$ endowed with a meromorphic quadratic differential, and there are finitely many poles of order $n\geq 2$ at the punctures of $X^\ast$ formed by the half planes and semi-infinite cylinders. A pole of order $2$ is formed by a semi-infinite cylinder, and a pole of order $n>2$ is formed by $n-2$ half planes. 

Strebel \cite{Str1984} proved the existence of a meromorphic quadratic differential with poles of order $2$ for a Riemann surface, given prescribed local data. There is a singular flat metric on the Riemann surface induced by the meromorphic quadratic differential with poles of order $2$ such that the surface consists of a collection of semi-infinite cylinders glued by isometries on their boundaries. Gupta extended Strebel's result to the case of meromorphic quadratic differential with higher-order poles (see \cite{Gup2014} and \cite{GW2016}). The Riemann surface under the associated singular flat metric, induced by the meromorphic quadratic differential with higher-order poles, is isometric to a collection of half-planes glued by an interval exchange mapping on their boundaries.

The critical graph $\widetilde{\Gamma}_q$ on $X$ admits a decomposition analogous to that of the finite critical graph $\Gamma_q$. Specifically, $\widetilde{\Gamma}_q$ can be written as
$$
\widetilde{\Gamma}_q=\widetilde{\Gamma}_{q,1}\cup\widetilde{\Gamma}_{q,2}\cup\cdots\cup\widetilde{\Gamma}_{q,n},
$$
where each $\widetilde{\Gamma}_{q,i}$ is a subgraph of $\widetilde{\Gamma}_q$ containing the corresponding subgraph $\Gamma_{q,i}$ of $\Gamma_q$. It is clear that each $\widetilde{\Gamma}_{q,i}$ is an admissible metric graph. For the decomposition $X=\bigcup_{i=1}^nX_i$ of the Riemann surface $X$ described in \S\ref{TheDecomposition}, each surface $X_i$ corresponds to a subgraph $\widetilde{\Gamma}_{q,i}$ of $\widetilde{\Gamma}_q$. By gluing half planes and semi-infinite cylinders along the edges of $\widetilde{\Gamma}_{q,i}$ in a manner analogous to the gluing of cylinders and rectangles in \S\ref{TheDecomposition}, we obtain a half-plane surface $X_{\infty,i}$.  The surface $X_i$ can be isometrically embedded into $X_{\infty,i}$ in a way that preserves the edges of the rectangles and cylinders along the graph $\widetilde{\Gamma}_{q,i}$. Similarly, for a surface $X_t=\bigcup_{i=1}^nX_{t,i}$ along the Teichm\"uller ray $\mathcal{R}_{q,X}(t)$, each subsurface $X_{t,i}$ can also be isometrically embedded in $X_{\infty,i}$. Following the construction in \S\ref{TheDecomposition}, we can obtain a sequence $X_{t_k,i}$ along the Teichm\"uller ray $\mathcal{R}_{q,X}(t)$, where each surface $X_{t_k,i}$ is isometrically embedded into $X_{t_{k+1},i}$. This sequence forms an exhaustion of the surface $X_{\infty,i}$.

Gupta also discussed the half plane surface associated with a Teichm\"uller ray, referred to as the conformal limit of the Teichm\"uller ray, in \cite{Gup2019}. Furthermore, Gupta showed that there exists a harmonic map from the conformal limit of a Teichm\"uller ray to a crowned hyperbolic surface. In this paper, we focus on the convergence of surface along a Teichm\"uller ray to its conformal limit and define the distance between the conformal limits of two Teichm\"uller rays.

\subsection{The limit surfaces}\label{limit surfaces}

We recall the Gromov-Hausdorff convergence for sequences of metric spaces (see \cite{BH1999}). An \emph{$\varepsilon$-relation} between two metric spaces $\varSigma_1$ and $\varSigma_2$ is a subset $\Lambda\subseteq \varSigma_1\times \varSigma_2$ such that:
\begin{itemize}
    \item[(1)] the projections of $\Lambda$ onto $\varSigma_1$ and $\varSigma_2$ respectively are surjective;
    \item[(2)] if $(x_1,y_1),(x_2,y_2)\in\Lambda$ then $\left|d_{\varSigma_1}(x_1,x_2)-d_{\varSigma_2}(y_1,y_2)\right|<\varepsilon$, where $d_{\varSigma_1}$ and $d_{\varSigma_2}$ are metrics on $\varSigma_1$ and $\varSigma_2$ respectively. 
\end{itemize}
We denote by $\varSigma_1\simeq_\varepsilon\varSigma_2$ if there is an $\varepsilon$-relation between $\varSigma_1$ and $\varSigma_2$, and we denote by $x\Lambda y$ if $(x,y)\in\Lambda$. The \emph{Gromov-Hausdorff distance} between $\varSigma_1$ and $\varSigma_2$ is defined as
$$
d_{GH}(\varSigma_1,\varSigma_2):=\inf\{\varepsilon\mid\varSigma_1\simeq_\varepsilon\varSigma_2\}.
$$
We say that a sequence of metric spaces $\varSigma_n$ converges to $\varSigma$ in the Gromov-Hausdorff sense if and only if $d_{GH}(\varSigma_n,\varSigma)\to 0$ as $n\to\infty$.

For the convergence of non-compact metric spaces, we consider the metric space $\varSigma$ with a basepoint $x\in\varSigma$. A sequence of pointed metric space $(\varSigma_n,x_n)$ is said to converge to $(\varSigma,x)$ if for any $r>0$, the sequence of closed balls $\overline{B}(x_n,r)\subseteq\varSigma_n$ converges to $\overline{B}(x,r)\subseteq\varSigma$ in the Gromov-Hausdorff sense. Then we call that $(\varSigma_n,x_n)$ converges to $(\varSigma,x)$ in the pointed Gromov-Hausdorff sense.

Let $x_\ast$ be a singularity of the vertical measured foliation $V(q)$ on $X\in\mathcal{T}(S)$. Since the finite critical graph $\Gamma_q$ is preserved along the Teichm\"uller ray $\mathcal{R}_{q,X}(t)$, We can consider the convergence of the sequence $(X_t,x_\ast)$ with the singular flat metric induced by $e^{2t}q_t$ along the ray $\mathcal{R}_{q,X}(t)$ in the sense of pointed Gromov-Hausdorff.

\begin{lemma}\label{TheGromovHausdorffConvergence}
    Let $\Gamma_q=\bigcup_{i=1}^n\Gamma_{q,i}$ be the finite critical graph of $q$ on $X$ and $X_t=\bigcup_{i=1}^n X_{t,i}$ be the surface with the singular flat metric induced by $e^{2t}q_t$ along the Teichm\"uller ray $\mathcal{R}_{q,X}(t)$. If $x_i$ is a singularity in $\Gamma_{q,i}$, then the sequence $(X_t,x_i)$ converges to the half-plane surface $(X_{\infty,i},x_i)$ in the pointed Gromov-Hausdorff sense.
\end{lemma}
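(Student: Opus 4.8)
The plan is to verify the definition of pointed Gromov--Hausdorff convergence directly. Fix $r>0$; it suffices to prove that once $t$ is large the closed ball $\overline{B}_{X_t}(x_i,r)$ in the $e^{2t}q_t$--metric is in fact \emph{isometric} to the closed ball $\overline{B}_{X_{\infty,i}}(x_i,r)$ in the half-plane surface, for then $d_{GH}\bigl(\overline{B}_{X_t}(x_i,r),\overline{B}_{X_{\infty,i}}(x_i,r)\bigr)=0$ for all large $t$ and in particular tends to $0$; since $r$ is arbitrary this gives the claim. Throughout I would treat as known the two structural facts produced in \S\ref{TheDecomposition} and in the construction of $X_{\infty,i}$: for every $t$ there is a flat-isometric embedding $\iota_t\colon X_{t,i}\hookrightarrow X_{\infty,i}$ fixing $\widetilde{\Gamma}_{q,i}$ (hence fixing the basepoint $x_i$), and the nested images $\iota_{t_k}(X_{t_k,i})$ exhaust $X_{\infty,i}$.

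\textbf{Step 1 (localization).} The first and principal point is the separation estimate
$$
\operatorname{dist}_{X_t}\bigl(x_i,\,X_t\setminus X_{t,i}\bigr)\ \longrightarrow\ \infty\qquad (t\to\infty).
$$
Indeed, to leave $X_{t,i}$ a path must cross one of the finitely many cylinders or rectangles that separate $\Gamma_{q,i}$ from the neighbouring components, passing from the critical graph $\widetilde{\Gamma}_{q,i}$ (which bounds each such region along its singular edge) to the core curve or median segment along which $X_{t,i}$ meets the adjacent subsurface. Considering the last such region the path traverses, the $e^{2t}$--growth recorded in \S\ref{TheDecomposition} places this median at transverse distance at least $\tfrac12 c_0 e^{2t}$ from $\widetilde{\Gamma}_{q,i}$, where $c_0>0$ is the smallest of the relevant widths and heights measured on $X$; since the length of any path dominates the transverse displacement it realizes, the displayed estimate follows. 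Consequently, for $t$ with $\tfrac12 c_0 e^{2t}>r$ every geodesic of length $\le r$ issuing from $x_i$ stays in the interior of $X_{t,i}$, whence $\overline{B}_{X_t}(x_i,r)=\overline{B}_{X_{t,i}}(x_i,r)$, the latter computed in the intrinsic metric of $X_{t,i}$.

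\textbf{Step 2 (transfer to the limit).} Next I would push this ball into $X_{\infty,i}$ through $\iota_t$. The half-plane surface carries a complete, locally compact length metric, so by the Hopf--Rinow theorem its closed balls are compact; in particular $\overline{B}_{X_{\infty,i}}(x_i,r+1)$ is compact. Since the nested $\iota_{t_k}(X_{t_k,i})$ exhaust $X_{\infty,i}$, this compact ball lies in some $\iota_{t_K}(X_{t_K,i})$, and by monotonicity of the construction in $t$ it lies in $\iota_t(X_{t,i})$ for all $t\ge t_K$. As $\iota_t$ is flat-isometric it does not increase distances; conversely, any $X_{\infty,i}$--geodesic of length $\le r$ from $x_i$ remains inside $\overline{B}_{X_{\infty,i}}(x_i,r)\subseteq\iota_t(X_{t,i})$ and so pulls back to a path of equal length in $X_{t,i}$. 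Hence $\iota_t$ carries $\overline{B}_{X_{t,i}}(x_i,r)$ isometrically \emph{onto} $\overline{B}_{X_{\infty,i}}(x_i,r)$, distances being preserved in both directions and surjectivity following by pulling back geodesics. Together with Step 1 this yields the sought isometry $\overline{B}_{X_t}(x_i,r)\cong\overline{B}_{X_{\infty,i}}(x_i,r)$ for all large $t$.

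\textbf{Main obstacle.} Step 2 is essentially formal once the isometric embeddings and the exhaustion are available; the genuine content is Step 1, namely ruling out any geodesic shortcut that leaves $X_{t,i}$ by the quantitative $e^{2t}$--separation of the components of $\Gamma_q$, together with the dual fact that distances from $x_i$ within radius $r$ are realized \emph{inside} the embedded region, on both $X_t$ and $X_{\infty,i}$. Some care is also needed to pass from the discrete exhausting times $t_k$ to the continuous parameter $t\to\infty$, which I would handle by the monotone growth of $X_{t,i}$ in $t$; should that monotonicity only be at hand along $\{t_k\}$, one argues along the sequence and controls $t\in[t_k,t_{k+1}]$ by the sandwiching $\iota_{t_k}(X_{t_k,i})\subseteq\iota_t(X_{t,i})\subseteq\iota_{t_{k+1}}(X_{t_{k+1},i})$.
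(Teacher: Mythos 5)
Your overall strategy is the paper's: show that for each $r$ the ball $\overline{B}_{X_t}(x_i,r)$ is eventually contained in $X_{t,i}$, then transfer it through the isometric embedding into $X_{\infty,i}$. Step 2 is fine modulo Step 1. But Step 1 contains a genuine gap, and it is exactly the point where the paper has to work: the separation estimate $\operatorname{dist}_{X_t}(x_i,X_t\setminus X_{t,i})\ge \tfrac12 c_0e^{2t}$ is false for the decomposition associated with a \emph{fixed} transversal $\tau$. The boundary of $X_{t,i}$ is not made only of the ``median'' vertical edges and cylinder core curves lying at large horizontal displacement from $\widetilde{\Gamma}_{q,i}$; by the construction in \S\ref{TheDecomposition} (see the Remark after the lemma), each boundary component of $X_{t,i}$ also contains \emph{horizontal segments lying on $\tau$}. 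A critical vertical trajectory emanating from the singularity $x_i$ is dense in its minimal component, so it first meets $\tau$ after some finite vertical length $\ell_0$ depending only on $\tau$; since the Teichm\"uller deformation $z\mapsto e^{2t}x+iy$ preserves vertical lengths, this gives a path from $x_i$ to $\partial X_{t,i}$ of length $\le\ell_0$ for \emph{every} $t$, with zero transverse displacement. Your claim that ``the length of any path dominates the transverse displacement it realizes'' is true but irrelevant to these boundary pieces, and your constant $c_0$ cannot involve heights at all, since heights are not stretched.

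The missing idea is the $t$-dependent choice of transversal: one must replace $\tau$ by a subinterval $\tau_t$ chosen to avoid the endpoints on $\tau$ of the singular vertical edges of the previous decomposition, so that the first-hit length of every critical trajectory to $\tau_t$ exceeds any prescribed bound, and then take $t$ large enough that the ($e^{2t}$-stretched) widths dominate the heights. Only with this re-decomposition does $X_{t,i}$ contain $\overline{B}_t(x_i,r)$, which is precisely how the paper's proof proceeds. Note also that your closing appeal to ``monotone growth of $X_{t,i}$ in $t$'' and to the exhaustion $\iota_{t_k}(X_{t_k,i})$ already presupposes this $t$-dependent choice, so the device cannot be avoided; as written, your Step 1 would let you conclude containment of arbitrarily large balls in a fixed-$\tau$ subsurface whose boundary stays at bounded distance from $x_i$, which is a contradiction.
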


\begin{proof}
    For any $r>0$, let $\overline{B}_t(x_i,r)$ be a closed ball in $X_t$ and $\overline{B}_\infty(x_i,r)$ be a closed ball in $X_{\infty,i}$. We can pick an appropriate horizontal segment $\tau_t$ for each minimal component of $X_t\setminus\Gamma_q$ and a sufficiently large $t$ such that the subsurface $X_{t,i}$ in the decomposition of $X_t$ associated with $\tau_t$ contains the closed ball $\overline{B}_t(x_i,r)$. Since the subsurface $X_{t,i}$ can be isometrically embedded in the surface $X_{\infty,i}$ preserving the graph $\Gamma_{q,i}$, this implies that $\overline{B}_t(x_i,r)$ converges to $\overline{B}_\infty(x_i,r)$ in the Gromov-Hausdorff sense. Then the sequence $(X_t,x_i)$ converges to the surface $(X_{\infty,i},x_i)$ in the sense of pointed Gromov-Hausdorff.
\end{proof}

\begin{xrem}
    From the proof of Lemma \ref{TheGromovHausdorffConvergence}, we can pick the horizontal segment $\tau$ for each minimal component of $X\setminus\Gamma_q$ such that for sufficiently large $t$, the subsurface $X_{t,i}$ contains the closed ball $\overline{B}_t(x_i,r)$. This implies that $(X_{t,i},x_i)$ converges to the surface $(X_{\infty,i},x_i)$ in the pointed Gromov-Hausdorff sense. Since there is an isometric embedding from $X_{t,i}$ to $X_{\infty,i}$ preserving the graph $\Gamma_{q,i}$, we can treat $X_{t,i}$ as a subsurface of $X_{\infty,i}$. As described in \S\ref{TheDecomposition}, by selecting an appropriate horizontal segment $\tau_t$ for each minimal component of $X_t\setminus\Gamma_q$, we can obtain a sequence of surfaces $X_{t,i}$ along the Teichm\"uller ray $\mathcal{R}_{q,X}(t)$, which forms an exhaustion of the surface $X_{\infty,i}$. Since $X_{t,i}$ contains $\Gamma_{q,i}$, and from the decomposition of $X_t$, each boundary component of $X_{t,i}$ is a simple closed curve composed of vertical segments along the leaves of $V(q)$ and horizontal segments on $\tau_t$, we have the following equality for the Euler characteristic:
    $$
    \chi(\Gamma_{q,i})=\chi(X_{t,i})=\chi(X_{\infty,i})=2-2g_i-n_i
    $$
    for any $i=1,\cdots,n$, where $g_i$ is the genus of the surface $X_{\infty,i}$ and $n_i$ is the number of punctures on $X_{\infty,i}$.
\end{xrem}

\begin{proposition}\label{TheGHconvergenceWithDifferentBasepoints}
    Let $(X_t,x_t)$ be a sequence of Riemann surface $X_t=\bigcup_{i=1}^n X_{t,i}$ with basepoint $x_t\in X_t$ along the Teichm\"uller ray $\mathcal{R}_{q,X}(t)$. If for a singularity $x_i\in\Gamma_{q,i}$, the distance between $x_i$ and $x_t$ is uniformly bounded on each $X_t$, then there is a subsequence of $(X_t,x_t)$ converging to the half-plane surface $(X_{\infty,i},x_\infty)$ in the sense of pointed Gromov-Hausdorff, where $x_\infty$ is a point in $X_{\infty,i}$. 
\end{proposition}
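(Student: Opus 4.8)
The plan is to leverage the fixed-basepoint convergence of Lemma \ref{TheGromovHausdorffConvergence} together with the isometric embeddings $X_{t,i}\hookrightarrow X_{\infty,i}$ recorded in the remark, and to extract a limit basepoint using the properness of the half-plane surface. Let $D>0$ be the uniform bound, so that $d_{X_t}(x_i,x_t)\le D$ for all $t$. First I would fix, as in the proof of Lemma \ref{TheGromovHausdorffConvergence}, horizontal segments on the minimal components of $X_t\setminus\Gamma_q$ so that for each $\rho>0$ and all sufficiently large $t$ the closed ball $\overline{B}_t(x_i,\rho)$ lies in the subsurface $X_{t,i}$, which embeds isometrically into $X_{\infty,i}$ via a map $\iota_t$ fixing $x_i$ and preserving $\Gamma_{q,i}$; moreover the images of these embeddings contain balls $\overline{B}_\infty(x_i,\rho_t)$ with $\rho_t\to\infty$, so that the $X_{t,i}$ exhaust $X_{\infty,i}$. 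Because the geodesic from $x_i$ to $x_t$ stays inside $\overline{B}_t(x_i,D)\subseteq X_{t,i}$ for large $t$, the embedding is distance-preserving along it, and hence $\iota_t(x_t)$ is a point of the fixed closed ball $\overline{B}_\infty(x_i,D)\subseteq X_{\infty,i}$.

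Next I would use that $X_{\infty,i}$, being obtained by gluing finitely many half planes and semi-infinite cylinders along the finite graph $\widetilde{\Gamma}_{q,i}$, is a complete, locally compact geodesic space, so that $\overline{B}_\infty(x_i,D)$ is compact. Compactness yields a subsequence $t_k\to\infty$ along which $\iota_{t_k}(x_{t_k})\to x_\infty$ for some $x_\infty\in\overline{B}_\infty(x_i,D)\subseteq X_{\infty,i}$. It then remains to verify that $(X_{t_k},x_{t_k})$ converges to $(X_{\infty,i},x_\infty)$ in the pointed Gromov-Hausdorff sense, i.e. that $\overline{B}_{t_k}(x_{t_k},r)\to\overline{B}_\infty(x_\infty,r)$ in the Gromov-Hausdorff sense for every fixed $r>0$. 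For large $k$ one has $\overline{B}_{t_k}(x_{t_k},r)\subseteq\overline{B}_{t_k}(x_i,D+r)\subseteq X_{t_k,i}$ and, by the exhaustion, $\overline{B}_\infty(\iota_{t_k}(x_{t_k}),r)\subseteq\overline{B}_\infty(x_i,D+r)$ lies in the image of $\iota_{t_k}$; since geodesics between points of these balls remain inside $X_{t_k,i}$, the embedding identifies $\overline{B}_{t_k}(x_{t_k},r)$ isometrically with $\overline{B}_\infty(\iota_{t_k}(x_{t_k}),r)$.

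Finally, because $\iota_{t_k}(x_{t_k})\to x_\infty$ and $X_{\infty,i}$ is a proper geodesic space, the closed balls of radius $r$ depend continuously on their centres in the Gromov-Hausdorff metric: writing $\varepsilon_k=d_{X_{\infty,i}}(\iota_{t_k}(x_{t_k}),x_\infty)\to 0$, one has the inclusions $\overline{B}_\infty(x_\infty,r-\varepsilon_k)\subseteq\overline{B}_\infty(\iota_{t_k}(x_{t_k}),r)\subseteq\overline{B}_\infty(x_\infty,r+\varepsilon_k)$, and retracting the outer shell along geodesics issuing from $x_\infty$ produces an $\varepsilon_k$-relation between $\overline{B}_\infty(\iota_{t_k}(x_{t_k}),r)$ and $\overline{B}_\infty(x_\infty,r)$. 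Combining the isometry of the previous step with this estimate gives $d_{GH}(\overline{B}_{t_k}(x_{t_k},r),\overline{B}_\infty(x_\infty,r))\to 0$ for each $r$, which is exactly the desired pointed Gromov-Hausdorff convergence.

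I expect the main obstacle to be precisely this last point, namely controlling how the closed ball $\overline{B}_\infty(\,\cdot\,,r)$ varies as its centre moves, together with justifying the compactness of $\overline{B}_\infty(x_i,D)$ that allows the extraction of $x_\infty$; once these geometric facts about the half-plane surface are in place, the identification of $X_{t,i}$ with a subsurface of $X_{\infty,i}$ and the reduction to the fixed-basepoint case of Lemma \ref{TheGromovHausdorffConvergence} are routine consequences of the remark.
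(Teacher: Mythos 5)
Your proposal is correct and follows essentially the same route as the paper: embed $x_t$ into the fixed compact ball $\overline{B}_\infty(x_i,M)\subseteq X_{\infty,i}$ via the isometric embeddings of the $X_{t,i}$, extract a convergent subsequence of basepoints, identify $\overline{B}_t(x_t,r)$ isometrically with a ball in $X_{\infty,i}$, and control the dependence of $\overline{B}_\infty(\cdot,r)$ on its centre. The only cosmetic difference is in the last step, where the paper uses the relation $\Lambda_t=\{(x,y)\mid d_\infty(x,y)<\varepsilon/2\}$ directly (which makes surjectivity of both projections immediate), whereas your geodesic retraction needs a word about why its graph surjects onto all of $\overline{B}_\infty(x_\infty,r)$; replacing it by the paper's relation, or by the relation of pairs within distance $2\varepsilon_k$, removes this quibble.
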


\begin{proof}
    Let $d_t$ be the singular flat metric on $X_t$ and $d_\infty$ be the singular flat metric on $X_{\infty,i}$. Since the distance between $x_i$ and $x_t$ is uniformly bounded on each $X_t$, there exists a constant $M>0$ such that $d_t(x_i,x_t)<M$ for any $t\geq 0$. We can choose an appropriate horizontal segment $\tau_t$ for each minimal component of $X_t\setminus\Gamma_q$ such that for sufficiently large $t$, the subsurface $X_{t,i}$ contains the closed ball $\overline{B}_t(x_i,M)\subset X_t$. Since $X_{t,i}$ can be isometrically embedded in the surface $X_{\infty,i}$ and $x_t\in\overline{B}_t(x_i,M)$, we can regard $x_t$ as a point in $X_{\infty,i}$ for sufficiently large $t$. Then $x_t$ is in the closed ball $\overline{B}_\infty(x_i,M)\subset X_{\infty,i}$. There is a subsequence of $\{x_t\}$ converging to a point $x_\infty\in \overline{B}_\infty(x_i,M)$. We show that the subsequence of $(X_t,x_t)$ corresponding to the subsequence of $\{x_t\}$ converges to $(X_{\infty,i},x_\infty)$ in the sense of pointed Gromov-Hausdorff. We still denote by $(X_t,x_t)$ the subsequence of $(X_t,x_t)$ for simplicity. 

    For any $r>0$, we need to show that $\overline{B}_t(x_t,r)$ converges to $\overline{B}_\infty(x_\infty,r)$ in the Gromov-Hausdorff sense. Similarly, we choose an appropriate small horizontal segment $\tau_t$ for each minimal component of $X_t\setminus\Gamma_q$ and sufficiently large $t$ such that $\overline{B}_t(x_t,r)\subset X_{t,i}$. Since $X_{t,i}$ can be isometrically embedded in the surface $X_{\infty,i}$, we can regard $\overline{B}_t(x_t,r)$ as a closed ball in $X_{\infty,i}$, and for any $x_1,x_2\in\overline{B}_t(x_t,r)$, the distance $d_t(x_1,x_2)=d_\infty(x_1,x_2)$. For any $\varepsilon>0$, we have $d_\infty(x_t,x_\infty)<\frac{\varepsilon}{3}$ for sufficiently large $t$. Let
    $$
    \Lambda_t=\left\{(x,y)\in\overline{B}_t(x_t,r)\times\overline{B}_\infty(x_\infty,r)\mid d_\infty(x,y)<\frac{\varepsilon}{2}\right\}.
    $$
    For any $x\in\overline{B}_t(x_t,r)$, consider a neighborhood $U(x,\frac{\varepsilon}{2})=\{y\in X_{\infty,i}\mid d_\infty(x,y)<\frac{\varepsilon}{2}\}$ of $x$. Since $d_\infty(x_t,x_\infty)<\frac{\varepsilon}{3}$ for sufficiently large $t$, It is easy to know that $\{U(x,\frac{\varepsilon}{2})\mid x\in\overline{B}_t(x_t,r)\}$ can cover $\overline{B}_\infty(x_\infty,r)$. This implies that the projections of $\Lambda_t$ onto $\overline{B}_t(x_t,r)$ and $\overline{B}_\infty(x_\infty,r)$ respectively are surjective. For any $(x_1,y_1),(x_2,y_2)\in\Lambda_t$, we have 
    $$
    |d_t(x_1,x_2)-d_\infty(y_1,y_2)|=|d_\infty(x_1,x_2)-d_\infty(y_1,y_2)|\leq d_\infty(x_1,y_1)+d_\infty(y_2,x_2)<\varepsilon.
    $$
    Then $\Lambda_t$ is an $\varepsilon$-relation between $\overline{B}_t(x_t,r)$ and $\overline{B}_\infty(x_\infty,r)$ for sufficiently large $t$. This shows that $\overline{B}_t(x_t,r)$ converges to $\overline{B}_\infty(x_\infty,r)$ in the Gromov-Hausdorff sense.
\end{proof}

We define the limit surface $X_\infty$ of the Teichm\"uller ray $\mathcal{R}_{q,X}(t)$ as
$$
X_\infty=\bigcup_{i=1}^n X_{\infty,i},
$$
where $X_{\infty,i}$ is the half-plane surface converged by the subsurface $X_{t,i}$ of $X_t=\bigcup_{i=1}^n X_{t,i}$ in the sense of pointed Gromov-Hausdorff. 

Let $f:S\to X$ be the marking of $X$ and $f_t:X\to X_t$ be the Teichm\"uller mapping between $X$ and $X_t$. For a decomposition $X_t=\bigcup_{i=1}^n X_{t,i}$ of $X_t$, there is a decomposition of $S$ denoted by 
$$
S=\bigcup_{i=1}^n S_i,
$$
where $S_i=f^{-1}\circ f_t^{-1}(X_{t,i})$. Similarly, if we consider the graph $f^{-1}(\widetilde{\Gamma}_q)$ on $S$, for each subgraph $\widetilde{\Gamma}_{q,i}$ of $\widetilde{\Gamma}_q$, there is a half-plane surface $S_{\infty,i}$ obtained by gluing half planes and semi-infinite cylinders along the edges of $f^{-1}(\widetilde{\Gamma}_{q,i})$ as the pattern of $S_i$. Since the mapping $f:S\to X$ is a quasiconformal mapping, for each $X_i\subset X$, we consider the restriction of $f$ on $f^{-1}(X_i)\subset S$ and extend $f\big|_{f^{-1}(X_i)}$ to a quasiconformal mapping $g_i:S_{\infty,i}\to X_{\infty,i}$ up to homotopy. Then the surface $X_{\infty,i}=[X_{\infty,i},g_i]$ is in the Teichm\"uller space $\mathcal{T}(S_{\infty,i})$. 

We define the marked limit surface for the Teichm\"uller ray $\mathcal{R}_{q,X}(t)$ as
$$
X_\infty=\bigcup_{i=1}^n X_{\infty,i}=\bigcup_{i=1}^n [X_{\infty,i},g_i].
$$
It is clear that 
$$
X_\infty\in\prod_{i=1}^n \mathcal{T}(S_{\infty,i}).
$$
Then we define the Teichm\"uller distance $d_{\overline{\mathcal{T}}}$ between two marked limit surfaces $X_\infty$ and $Y_\infty$ as follows:
\begin{itemize}
    \item If $X_\infty,Y_\infty\in\prod_{i=1}^n \mathcal{T}(S_{\infty,i})$, the Teichm\"uller distance is 
    $$
    d_{\overline{\mathcal{T}}}(X_\infty,Y_\infty)=\max_{1\leq i\leq n}\{d_{\mathcal{T}_i}(X_{\infty,i},Y_{\infty,i})\},
    $$
    where $d_{\mathcal{T}_i}$ is the Teichm\"uller metric on $\mathcal{T}(S_{\infty,i})$;
    \item Otherwise, the Teichm\"uller distance is
    $$
    d_{\overline{\mathcal{T}}}(X_\infty,Y_\infty)=+\infty.
    $$
\end{itemize}

\section{Upper estimate of the limiting Teichm\"uller distance}

Let $\mathcal{R}_{q,X}(t)$ and $\mathcal{R}_{q^\prime,Y}(t)$ be two Teichm\"uller rays. The vertical measured foliations $V(q)=\sum_{j=1}^{N}a_jG_j$ and $V(q^\prime)=\sum_{j=1}^{N}b_jG_j$ are absolutely continuous. In this section, we assume that each $G_j$ is a simple closed curve or a uniquely ergodic measure.

Let $X=\bigcup_{i=1}^{n}X_i$ be the decomposition of $X$ as in \S\ref{TheDecomposition}. We give a decomposition of $Y$ which is similar to the decomposition of $X$. Since $V(q)$ and $V(q^\prime)$ are absolutely continuous, for a minimal component $\Omega$ of $X\setminus\Gamma_q$, there is a corresponding minimal component $\Omega^\prime$ of $Y\setminus\Gamma_{q^\prime}$. We assume that the $(G_j,a_j\mu_j)$ and $(G_j,b_j\mu_j)$ are the restrictions of $V(q)$ and $V(q^\prime)$ on $\Omega$ and $\Omega^\prime$ respectively, where $\mu_j$ is the uniquely ergodic measure.

For a horizontal segment $\tau$, we have a rectangular decomposition of $\Omega$, that is
$$
\Omega=R_1\cup R_2\cup\cdots\cup R_m,
$$
Since $V(q)$ and $V(q^\prime)$ are topologically equivalent, there is a homeomorphism $h:X\setminus\Gamma_q\to Y\setminus\Gamma_{q^\prime}$ that takes the leaves of $V(q)$ to the leaves of $V(q^\prime)$. Let $\gamma_L$ and $\gamma_R$ be the two leaves of $V(q)$ that each contains an endpoint of $\tau$. We choose a horizontal segment $\tau^\prime$ between $h(\gamma_L)$ and $h(\gamma_R)$ on $Y$ such that $\tau^\prime$ is isotopic to $h(\tau)$, and the first return mappings on $\tau$ and $\tau^\prime$ are identical. Then we have 
$$
\frac{\ell(\tau^\prime)}{\ell(\tau)}=\frac{b_j\mu_j(\tau^\prime)}{a_j\mu_j(\tau)}=\frac{b_j}{a_j},
$$
where $\ell(\tau)$ and $\ell(\tau^\prime)$ are the lengths of $\tau$ and $\tau^\prime$ respectively. Then we can obtain a rectangular decomposition of $\Omega^\prime$, that is 
$$
\Omega^\prime=R_1^\prime\cup R_2^\prime\cup\cdots\cup R_m^\prime.
$$
The width $\ell(R_i)$ of $R_i$ and the width $\ell(R_i^\prime)$ of $R_i^\prime$ satisfy 
$$
\frac{\ell(R_i^\prime)}{\ell(R_i)}=\frac{b_j\mu_j(R_i^\prime)}{a_j\mu_j(R_i)}=\frac{b_j}{a_j},\quad \text{for any }1\leq i\leq k.
$$

Let $A^\prime$ be the cylindrical component of $Y\setminus\Gamma_{q^\prime}$ corresponding to the cylinder $A$ of $X\setminus\Gamma_q$. The ratio of the heights of $A^\prime$ and $A$ is $\frac{b_j}{a_j}$. Similarly, we split $A^\prime$ into two cylinders with the same height along a closed leaf of $V(q^\prime)$. Then the surface $Y$ is the union of the cylinders and rectangles, and if we glue the cylinders and rectangles along their boundaries containing singularities and saddle connections, we obtain a similar decomposition of $Y$ that is 
$$
Y=Y_1\cup Y_2\cup\cdots\cup Y_n,
$$
where $Y_i$ is the subsurface having the same type as the subsurface $X_i$ of $X$. It is the same for $Y_t$ along $\mathcal{R}_{q^\prime,Y}(t)$, which has the similar decomposition to $X_t$, and can be written as $Y_t=\bigcup_{i=1}^{n}Y_{t,i}$. The finite critical graph $\Gamma_{q^\prime}$ can be written as $\Gamma_{q^\prime}=\bigcup_{i=1}^{n}\Gamma_{q^\prime,i}$, where $\Gamma_{q^\prime,i}$ is a connected subgraph of $\Gamma_{q^\prime}$ which is contained in the subsurface $Y_{q^\prime,i}$.

Motivated by Masur's method in \cite{Mas1980}, we generalize this method to the more general case and obtain the following lemma.

\begin{lemma}\label{TheGluingMapping}
    Let $X_t$ be a surface along $\mathcal{R}_{q,X}(t)$ and $Y_t$ be a surface along $\mathcal{R}_{q^\prime,Y}(t)$. The vertical measured foliations $V(q)=\sum_{j=1}^{N}a_jG_j$ and $V(q^\prime)=\sum_{j=1}^{N}b_jG_j$ are absolutely continuous, where each $G_j$ is a simple closed curve or a uniquely ergodic measure. Then for any $\varepsilon>0$, there exist decompositions $X_t=\bigcup_{i=1}^{n}X_{t,i}$ and $Y_t=\bigcup_{i=1}^{n}Y_{t,i}$ such that for sufficiently large $t$ and any $1\leq i\leq n$, there is a quasiconformal mapping $g_{t,i}:X_{t,i}\setminus\overline{\Gamma}_{q,i}\to Y_{t,i}\setminus\overline{\Gamma}_{q^\prime,i}$ with the dilatation 
    $$
    K(g_{t,i})\leq\max_{1\leq j\leq N}\left\{\frac{m_j^\prime}{m_j},\frac{m_j}{m_j^\prime}\right\}+O(\varepsilon),
    $$
    where $\overline{\Gamma}_{q,i}$ $(\overline{\Gamma}_{q^\prime,i})$ is a compact subset of $\widetilde{\Gamma}_q$ $(\widetilde{\Gamma}_{q^\prime})$.
\end{lemma}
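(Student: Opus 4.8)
The plan is to build the maps $g_{t,i}$ one component at a time and glue, using the rectangular and cylindrical decompositions already set up together with the fact, forced by absolute continuity and unique ergodicity, that corresponding widths satisfy $\ell(R_i')/\ell(R_i)=b_j/a_j$ and corresponding cylinder heights are in ratio $b_j/a_j$. I would first dispose of the cylindrical components. If $A\subset X_t$ and $A'\subset Y_t$ correspond, with $V(q)|_A=a_j\alpha$ and $V(q')|_{A'}=b_j\alpha$, then under the normalization $e^{2t}q_t$ their circumferences are $e^{2t}c$ and $e^{2t}c'$ and their heights are $a_j$ and $b_j$, where $c=i(G_j,H(q))$, $c'=i(G_j,H(q'))$. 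The affine map scaling circumference by $c'/c$ and height by $b_j/a_j$ is quasiconformal with dilatation exactly $\max\{(c'a_j)/(cb_j),(cb_j)/(c'a_j)\}=\max\{m_j/m_j',m_j'/m_j\}$, and the factor $e^{2t}$ cancels, so the bound is realized on every cylinder with no approximation.

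The heart of the argument is the minimal components, where I would generalize Masur's scheme. On a minimal component $\Omega$ the first return to $\tau$ gives rectangles $R_k$ of width $w_k$ and height (return time) $h_k$, while the corresponding $R_k'\subset Y_t$ has width $w_k'=(b_j/a_j)w_k$ and height $h_k'$. The affine map $R_k\to R_k'$ has dilatation $\max\{(b_j/a_j)(h_k/h_k'),(a_j/b_j)(h_k'/h_k)\}$, again $t$-independent. The difficulty is that $h_k$ and $h_k'$ are governed by the \emph{horizontal} foliations $H(q)$, $H(q')$, which the hypotheses do not relate, so a rectangle-by-rectangle affine map does not a priori see the modulus. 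The key identities are $i(a_jG_j,H(q))=\mathrm{Area}(\Omega)$ and $i(b_jG_j,H(q'))=\mathrm{Area}(\Omega')$, which give $m_j=a_j^2/\mathrm{Area}(\Omega)$ and $m_j'=b_j^2/\mathrm{Area}(\Omega')$; setting $\mu:=(a_j/b_j)\,\mathrm{Area}(\Omega')/\mathrm{Area}(\Omega)$ one checks $(b_j/a_j)/\mu=m_j'/m_j$, so the affine map attains exactly the bound $\max\{m_j'/m_j,m_j/m_j'\}$ on any rectangle whose height ratio $h_k'/h_k$ is close to $\mu$.

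It remains to show that the height ratios concentrate at $\mu$, and this is exactly where unique ergodicity enters. Writing $\phi$ for the ratio of vertical length elements $|dy_{q'}/dy_q|$ along vertical leaves (well defined since $V(q)$ and $V(q')$ share the same leaves), a leaf segment of return time $h_k$ satisfies $h_k'/h_k=\frac{1}{h_k}\int_0^{h_k}\phi(\gamma_k(s))\,ds$, a Birkhoff average of $\phi$ along the vertical flow. Since $V(q)$ is uniquely ergodic, the vertical flow equidistributes with respect to the normalized area $\nu$, and $\int\phi\,d\nu=(a_j/b_j)\,\mathrm{Area}(\Omega')/\mathrm{Area}(\Omega)=\mu$; hence $h_k'/h_k\to\mu$ as the return times grow. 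Refining $\tau\supset\tau_1\supset\cdots$ and increasing $t$ as in the exhaustion makes every return time large, so for a suitable decomposition and all sufficiently large $t$ one gets $h_k'/h_k=\mu+O(\varepsilon)$ uniformly, and the affine rectangle maps have dilatation $\max\{m_j'/m_j,m_j/m_j'\}+O(\varepsilon)$. Because adjacent bulk rectangles share the common vertical scale $\mu$ up to $O(\varepsilon)$, these maps agree along the regular vertical edges and along $\tau$ (where both reduce to horizontal scaling by $b_j/a_j$) after a harmless smoothing; removing the compact part $\overline{\Gamma}_{q,i}$ of $\widetilde{\Gamma}_q$ (and its counterpart $\overline{\Gamma}_{q',i}$) excises the finitely many short excursions that hug the singularities and have not equidistributed, and absorbs the incompatibility coming from mismatched saddle-connection lengths.

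I expect the main obstacle to be the third step: quantifying the equidistribution so that the convergence $h_k'/h_k\to\mu$ is \emph{uniform} over all rectangles and \emph{simultaneous} on $X_t$ and $Y_t$ (so that the same combinatorial rectangles are typical on both sides), and then calibrating the refinement $\tau\supset\tau_1\supset\cdots$ against the threshold $t$ so that the residual Birkhoff error is genuinely $O(\varepsilon)$. One must also verify that the exceptional near-singularity rectangles are trapped inside a compact subset of $\widetilde{\Gamma}_q$ whose removal is permitted, and that the smoothing at the shared regular edges does not inflate the dilatation beyond the stated $O(\varepsilon)$.
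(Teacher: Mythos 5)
Your overall architecture matches the paper's: exact affine maps on cylinders, refinement of the transversal so that return times become long, affine rectangle maps with width ratio $b_j/a_j$ and height ratio tending to $i(G_j,H(q^\prime))/i(G_j,H(q))$ (your $\mu$ is exactly this quantity), and excision of a compact piece of $\widetilde{\Gamma}_q$ to absorb the boundary mismatch. The genuine gap is in your third step. You write $h_k^\prime/h_k=\frac{1}{h_k}\int_0^{h_k}\phi(\gamma_k(s))\,ds$ with $\phi=|dy_{q^\prime}/dy_q|$, but this density does not exist under the hypotheses: absolute continuity of $V(q)$ and $V(q^\prime)$ only provides a homeomorphism $h$ taking leaves to leaves, and there is no pointwise comparison of the two \emph{horizontal} transverse measures along a common leaf — $h$ restricted to a leaf need not be absolutely continuous, so the Birkhoff integrand is not a well-defined $L^1$ function and the continuous-time ergodic theorem cannot be invoked on it. The paper circumvents this by staying discrete: after passing to the subinterval $\sigma\subset\tau$, the coarse rectangle heights are finite sums $|R^\sigma_j|=\sum_i v_i|R_i|$ and $|R^{\sigma^\prime}_j|=\sum_i v_i|R_i^\prime|$ with the \emph{same integer visit counts} $v_i$ on both surfaces (because the first-return combinatorics of $\tau$ and $\tau^\prime$ are identical), and Birkhoff's theorem for the uniquely ergodic interval map $T$, applied to characteristic functions of the fine rectangles, gives $|v_i/v-\mu_j(R_i)|<\varepsilon$; the height ratio $i(G_j,H(q^\prime))/i(G_j,H(q))+O(\varepsilon)$ then follows by comparing the two weighted sums, with no leafwise density ever needed. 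Your heuristic $\int\phi\,d\nu=\mu$ is the right target value, but the only rigorous route to it is this visit-counting argument.

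A secondary point: your claim that the rectangle maps ``agree along the regular vertical edges \ldots after a harmless smoothing'' understates what is needed. The paper's Lemma~\ref{AffineMapping} builds maps that are linear on all four sides \emph{and} send two marked boundary points $P_1,P_2$ (the picked points cutting off the half-infinite critical rays) to their counterparts, which requires a separate application of the same ergodic estimate to the critical segments $\beta,\beta^\prime$ to show $|\beta^\prime|/|\beta|=i(G_j,H(q^\prime))/i(G_j,H(q))+O(\varepsilon)$. Without matching these marked points the maps on adjacent rectangles do not glue along the portions of the vertical edges lying outside $\overline{\Gamma}_{q,i}$, so this step cannot be dismissed as smoothing.
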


\begin{proof}
    Pick a small horizontal segment $\tau$ on a minimal component $\Omega$ of $X\setminus\Gamma_q$. The minimal component $\Omega$ has a rectangular decomposition $\Omega=\bigcup_{i=1}^{m}R_i$. Since $V(q)$ and $V(q^\prime)$ are absolutely continuous, we can pick a horizontal segment $\tau^\prime$ on the minimal component $\Omega^\prime$ of $Y\setminus\Gamma_{q^\prime}$ such that the first return mappings on $\tau$ and $\tau^\prime$ are identical. The rectangular decomposition of $\Omega^\prime$ is $\Omega^\prime=\bigcup_{i=1}^{m}R_i^\prime$.

    Let $(G_j,a_j\mu_j)$ be the measured foliation on $\Omega$ and $(G_j,b_j\mu_j)$ be the measured foliation on $\Omega^\prime$. Since the transverse measure $\mu_j$ is uniquely ergodic, the $T$ is uniquely ergodic on $\tau_+\cup\tau_-$. By Birkhoff's ergodic theorem, for any function $f$ on $\tau$, we have 
    $$
    \lim_{n\to\infty}\frac{1}{n}\sum_{k=0}^{n-1}f\circ T^k(x)=\int fd\mu_j.
    $$
    A routine approximation shows the same to be true if we replace $f$ by the characteristic function of an open interval. Then we consider the characteristic function $\chi_{R_i}$ of a horizontal edge of $R_i$. For any $\varepsilon>0$, we pick $N'$ large enough such that for all $n\geq N'$ and any $x\in\tau$,
    \begin{equation}\label{ergodicThm}
        \left|\frac{1}{n}\sum_{k=0}^{n-1}\chi_{R_i}\circ T^k(x)-\mu_j(R_i)\right|<\varepsilon.
    \end{equation}
    The same is true for $R_i^\prime$ and $\tau^\prime$.

    We can pick a subinterval $\sigma\subset\tau$ such that for any $x\in\sigma$, $T^k(x)$ is not a vertex of $R_i$ and $T^k(x)\notin\sigma$ for $0<k\leq N'-1$ and $-N'+1\leq k<0$. For the $\tau^\prime$, we can also pick a subinterval $\sigma^\prime\subset\tau^\prime$ satisfying the same condition. Then, if we consider the first return mappings on $\sigma$ and $\sigma^\prime$, there are similar rectangular decompositions $\Omega=\bigcup_{j=1}^{m}R^\sigma_j$ and $\Omega^\prime=\bigcup_{j=1}^{m}R^{\sigma^\prime}_j$. For a point $x$ on the horizontal edge of $R^\sigma_j$, Let $v_i$ be the number of visits of $x$ to $R_i$ before returning to $\sigma$. This is the same as the number of visits of $x$ to $R_i^\prime$ before returning to $\sigma^\prime$ for $x$ on the horizontal edge of $R^{\sigma^\prime}_j$. The $v_i$ is independent of the choice of $x$. We use $|\cdot|$ to denote the height of a rectangle. Then 
    $$
    |R^\sigma_j|=\sum_{i=1}^{m}|R_i|v_i,\quad |R^{\sigma^\prime}_j|=\sum_{i=1}^{m}|R_i^\prime|v_i.
    $$
    Let $v=\sum_{i=1}^{m}v_i$. Then we have $|\frac{v_i}{v}-\mu_j(R_i)|<\varepsilon$ by \eqref{ergodicThm}, and the same holds true for $R_i^\prime$. Thus,
    \begin{align*}
        & \frac{i(G_j,H(q^\prime))-\varepsilon\sum_{i=1}^{m}|R_i^\prime|}{i(G_j,H(q))+\varepsilon\sum_{i=1}^{m}|R_i|}
        =\frac{\sum_{i=1}^{m}|R_i^\prime|(\mu_j(R_i^\prime)-\varepsilon)}{\sum_{i=1}^{m}|R_i|(\mu_j(R_i)+\varepsilon)}
        \leq\frac{\sum_{i=1}^{m}|R_i^\prime|\frac{v_i}{v}}{\sum_{i=1}^{m}|R_i|\frac{v_i}{v}} \\
        & \leq\frac{\sum_{i=1}^{m}|R_i^\prime|(\mu_j(R_i^\prime)+\varepsilon)}{\sum_{i=1}^{m}|R_i|(\mu_j(R_i)-\varepsilon)}
        =\frac{i(G_j,H(q^\prime))+\varepsilon\sum_{i=1}^{m}|R_i^\prime|}{i(G_j,H(q))-\varepsilon\sum_{i=1}^{m}|R_i|}.
    \end{align*}
    Therefore,
    $$
    \frac{|R^{\sigma^\prime}_j|}{|R^\sigma_j|}=\frac{i(G_j,H(q^\prime))}{i(G_j,H(q))}+O(\varepsilon)\text{ as } \varepsilon\to 0.
    $$
    In all estimates, $O(\varepsilon)$ refers to a quantity such that $O(\varepsilon)\leq C\varepsilon$, where $C>0$ is some constant depending only on the initial surfaces and quadratic differentials.

    We pick a point on each half-infinite critical trajectory of $q$ which is close to the critical endpoint of the trajectory such that for each rectangle $R_i$, all critical points on the vertical edge of $R_i$ are between the two points we picked as in the Figure \ref{pickpoints}. Then we obtain a compact subset $\overline{\Gamma}_q=\bigcup_{i=1}^{n}\overline{\Gamma}_{q,i}$ of $\widetilde{\Gamma}_q$, where $\overline{\Gamma}_{q,i}$ is a connected subgraph of $\overline{\Gamma}_q$. In the same way, there is a graph $\overline{\Gamma}_{q^\prime}=\bigcup_{i=1}^{n}\overline{\Gamma}_{q^\prime,i}$ on $Y$.

    \begin{figure}[!htpb]
        \centering
    \fontsize{7pt}{9pt}\selectfont
    \def\svgwidth{0.9\columnwidth}
\begingroup%
  \makeatletter%
  \providecommand\color[2][]{%
    \errmessage{(Inkscape) Color is used for the text in Inkscape, but the package 'color.sty' is not loaded}%
    \renewcommand\color[2][]{}%
  }%
  \providecommand\transparent[1]{%
    \errmessage{(Inkscape) Transparency is used (non-zero) for the text in Inkscape, but the package 'transparent.sty' is not loaded}%
    \renewcommand\transparent[1]{}%
  }%
  \providecommand\rotatebox[2]{#2}%
  \newcommand*\fsize{\dimexpr\f@size pt\relax}%
  \newcommand*\lineheight[1]{\fontsize{\fsize}{#1\fsize}\selectfont}%
  \ifx\svgwidth\undefined%
    \setlength{\unitlength}{638.04065134bp}%
    \ifx\svgscale\undefined%
      \relax%
    \else%
      \setlength{\unitlength}{\unitlength * \real{\svgscale}}%
    \fi%
  \else%
    \setlength{\unitlength}{\svgwidth}%
  \fi%
  \global\let\svgwidth\undefined%
  \global\let\svgscale\undefined%
  \makeatother%
  \begin{picture}(1,0.32683376)%
    \lineheight{1}%
    \setlength\tabcolsep{0pt}%
    \put(0,0){\includegraphics[width=\unitlength,page=1]{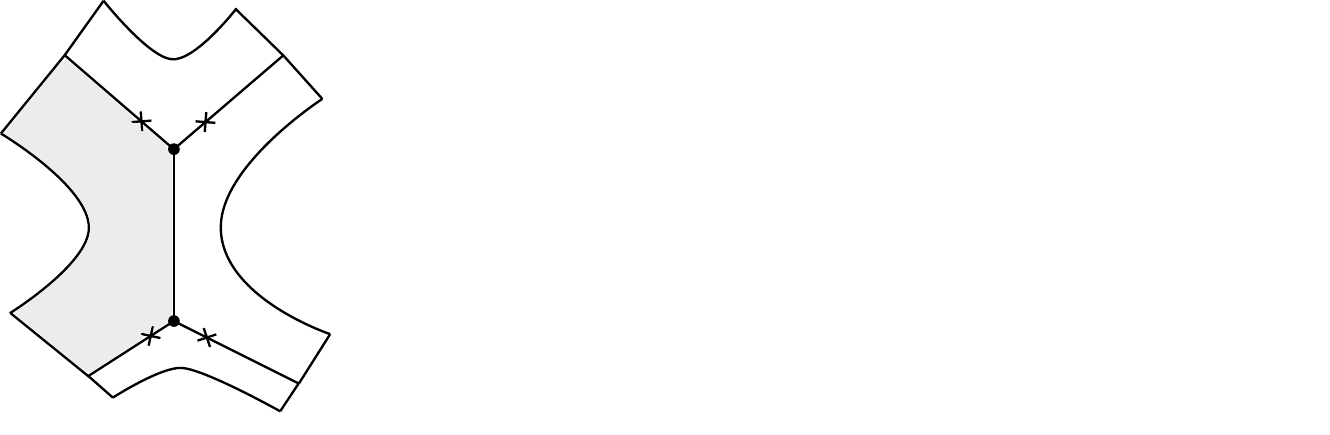}}%
    \put(0.08407925,0.15143135){\color[rgb]{0,0,0}\makebox(0,0)[lt]{\lineheight{0}\smash{\begin{tabular}[t]{l}$R_i$\end{tabular}}}}%
    \put(0,0){\includegraphics[width=\unitlength,page=2]{pickpoints.pdf}}%
    \put(0.36847771,0.1472253){\color[rgb]{0,0,0}\makebox(0,0)[lt]{\lineheight{0}\smash{\begin{tabular}[t]{l}$R_i$\end{tabular}}}}%
    \put(0,0){\includegraphics[width=\unitlength,page=3]{pickpoints.pdf}}%
    \put(0.62590528,0.16192797){\color[rgb]{0,0,0}\makebox(0,0)[lt]{\lineheight{0}\smash{\begin{tabular}[t]{l}$R_i^\prime$\end{tabular}}}}%
    \put(0,0){\includegraphics[width=\unitlength,page=4]{pickpoints.pdf}}%
    \put(0.94533132,0.14500685){\color[rgb]{0,0,0}\makebox(0,0)[lt]{\lineheight{0}\smash{\begin{tabular}[t]{l}$R_i^\prime$\end{tabular}}}}%
  \end{picture}%
\endgroup%

        \caption{Pick a point on each half-infinite critical trajectory such that all singularities are between the picked points for each $R_i$ and $R_i^\prime$.}
        \label{pickpoints}
    \end{figure}

    Consider the two points we picked on the vertical edge of $R^\sigma_j$. Let $\beta$ be the critical segment between one point we picked and a vertex of $R^\sigma_j$, and let $v_i^\prime$ be the number of visits of $\beta$ to $R_i$. For the $R^{\sigma^\prime}_j$, there is the corresponding segment $\beta^\prime$ on the vertical edge, and the number of visits of $\beta^\prime$ to $R_i^\prime$ is equal to $v_i^\prime$. Let $v^\prime=\sum_{i=1}^{m}v_i^\prime$. Since the first return mapping $T$ satisfies the conditions mentioned above for $\sigma$ and $\sigma^\prime$, we still have $|\frac{v_i^\prime}{v^\prime}-\mu_j(R_i)|<\varepsilon$ by \eqref{ergodicThm}. Therefore, we can obtain that the ratio of the lengths of $\beta$ and $\beta^\prime$ is
    $$
    \frac{|\beta^\prime|}{|\beta|}=\frac{i(G_j,H(q^\prime))}{i(G_j,H(q))}+O(\varepsilon)\text{ as } \varepsilon\to 0.
    $$

    \begin{lemma}\label{AffineMapping}
        Let $R$ and $R'$ be two rectangles with vertices $A_i$, $A_i'$ $(i=1,\cdots,4.)$ such that the ratio of heights is $\frac{|R'|}{|R|}=\frac{|A_2'A_3'|}{|A_2A_3|}=B+O(\varepsilon)$ as $\varepsilon\to 0$ and the ratio of widths is $\frac{|A_1'A_2'|}{|A_1A_2|}=C$, where $B>0$ and $C>0$ are some constants. The ratio of height and width satisfies $\frac{|R|}{|A_1A_2|}\leq 1$. Suppose there are two points $P_1,P_2$ on the edge $(A_1A_4)$ and the segments $(A_1P_1)$ and $(P_2A_4)$ have no intersection. Similarly, there are two disjoint segments $(A_1'P_1')$ and $(P_2'A_4')$ on the edge $(A_1'A_4')$ such that $\frac{|A_1'P_1'|}{|A_1P_1|}=B+O(\varepsilon)$ and $\frac{|P_2'A_4'|}{|P_2A_4|}=B+O(\varepsilon)$. Then there is a quasiconformal mapping $g:R\to R'$ with the dilatation $K(g)\leq\max\left\{\frac{C}{B},\frac{B}{C}\right\}+O(\varepsilon)$, which is linear on all sides and sends $P_1$ to $P_1'$, $P_2$ to $P_2'$.
    \end{lemma}

    \begin{proof}[\text{Proof of Lemma} \ref{AffineMapping}]
        Let the $A_i$ have coordinates $(0,0)$, $(a,0)$, $(a,b)$, $(0,b)$ in the $z=x+iy$ plane, and $A_i'$ have coordinates $(0,0)$, $(a',0)$, $(a',b')$, $(0,b')$ in the $w=u+iv$ plane. The $P_1$ and $P_2$ have coordinates $(0,c)$, $(0,d)$, and $P_1'$ and $P_2'$ have coordinates $(0,c')$, $(0,d')$. Then we have $\frac{b'}{b}=B+O(\varepsilon)$, $\frac{c'}{c}=B+O(\varepsilon)$, $\frac{b'-d'}{b-d}=B+O(\varepsilon)$ and $\frac{a'}{a}=C$. It is easy to check that $\frac{d'-c'}{d-c}=B+O(\varepsilon)$. Then we can construct the quasiconformal mapping $g:R\to R'$ that is  
        \begin{equation*}
            \begin{aligned}
                u&=\frac{a'}{a}x, & &v=y\left[\left(\frac{b'}{b}-\frac{c'}{c}\right)\frac{x}{a}+\frac{c'}{c}\right], & 0\leq y\leq c; \\
                u&=\frac{a'}{a}x, & &
                \begin{split}
                    v= & y\left[\left(\frac{b'}{b}-\frac{d'-c'}{d-c}\right)\frac{x}{a}+\frac{d'-c'}{d-c}\right] \\
                     & +c\left[\left(\frac{d'-c'}{d-c}-\frac{c'}{c}\right)\frac{x}{a}+\frac{c'}{c}-\frac{d'-c'}{d-c}\right]
                \end{split}, & c\leq y\leq d; \\
                u&=\frac{a'}{a}x, & &v=b'+(y-b)\left[\left(\frac{b'}{b}-\frac{b'-d'}{b-d}\right)\frac{x}{a}+\frac{b'-d'}{b-d}\right], & d\leq y\leq b.
            \end{aligned}
        \end{equation*}
        For $0\leq y\leq c$, 
        \begin{equation*}
            \begin{aligned}
                u_x & =\frac{a'}{a}, & u_y&=0; \\
                v_x & =\left(\frac{b'}{b}-\frac{c'}{c}\right)\frac{y}{a}=O(\varepsilon), & v_y&=\left(\frac{b'}{b}-\frac{c'}{c}\right)\frac{x}{a}+\frac{c'}{c}=B+O(\varepsilon).
            \end{aligned}
        \end{equation*}        
        We can get similar estimates for $c\leq y\leq d$ and $d\leq y\leq b$. Then the mapping $g$ is a quasiconformal mapping with the dilatation $K(g)\leq\max\left\{\frac{C}{B},\frac{B}{C}\right\}+O(\varepsilon)$. By the construction of $g$, the mapping $g$ is affine on all sides and sends $P_1$, $P_2$ to $P_1'$, $P_2'$ respectively.
    \end{proof}
    
    Since the surface $X_t$ and $Y_t$ preserve the vertical leaves of $V(q)$ and $V(q^\prime)$ along the Teichm\"uller rays, there are the rectangular decompositions $\Omega_t=\bigcup_{j=1}^{m}R^{\sigma}_{t,j}$ and $\Omega^\prime_t=\bigcup_{j=1}^{m}R^{\sigma^\prime}_{t,j}$ for the minimal components on $X_t$ and $Y_t$ respectively. Then we still have 
    $$
    \frac{|R^{\sigma^\prime}_{t,j}|}{|R^\sigma_{t,j}|}=\frac{i(G_j,H(q^\prime))}{i(G_j,H(q))}+O(\varepsilon)\text{ as } \varepsilon\to 0.
    $$
    The ratio of the widths $\ell(R^{\sigma^\prime}_{t,j})$ and $\ell(R^\sigma_{t,j})$ is 
    $$
    \frac{\ell(R^{\sigma^\prime}_{t,j})}{\ell(R^\sigma_{t,j})}=\frac{e^{2t}b_j\mu_j(R^{\sigma^\prime}_{t,j})}{e^{2t}a_j\mu_j(R^\sigma_{t,j})}=\frac{b_j}{a_j}.
    $$
    Let $t$ be sufficiently large such that $|R^\sigma_{t,j}|\leq\ell(R^\sigma_{t,j})$. By Lemma \ref{AffineMapping}, there is a quasiconformal mapping $f_{t,j}:R^\sigma_{t,j}\to R^{\sigma^\prime}_{t,j}$, and the dilatation of $f_{t,j}$ satisfies
    $$
    K(f_{t,j})\leq\max\left\{\frac{b_ji(G_j,H(q))}{a_ji(G_j,H(q^\prime))},\frac{a_ji(G_j,H(q^\prime))}{b_ji(G_j,H(q))}\right\}+O(\varepsilon)=\max\left\{\frac{m_j^\prime}{m_j},\frac{m_j}{m_j^\prime}\right\}+O(\varepsilon).
    $$
    Since $f_{t,j}$ is linear on the boundaries of $R^\sigma_{t,j}$, the mappings $f_{t,j}$ can agree along the boundaries of each $R^\sigma_{t,j}$ except for the portion on $\overline{\Gamma}_q$.

    Let $\bar{A}$ be a cylinder on $X$ obtained by splitting a cylindrical component $A$ of $X\setminus\Gamma_q$ into two cylinders of equal height, and let $\bar{A}^\prime$ be the corresponding cylinder on $Y$. Assume that $G_j$ is the measured foliation on $A$ with the modulus $m_j$. Then the modulus on $\bar{A}$ is $\frac{1}{2}m_j$. Similarly, the modulus on $\bar{A}^\prime$ is $\frac{1}{2}m_j^\prime$. We consider the corresponding cylinders $\bar{A}_t$ and $\bar{A}^\prime_t$ on $X_t$ and $Y_t$ respectively. The $\bar{A}_t$ and $\bar{A}^\prime_t$ can be represented by the annuli $\bar{A}_t=\{z\in\mathbb{C}\mid e^{-e^{2t}\pi m_j}\leq z\leq 1\}$ and $\bar{A}^\prime_t=\{w\in\mathbb{C}\mid e^{-e^{2t}\pi m_j^\prime}\leq w\leq 1\}$. Then we can construct a quasiconformal mapping $f_{t,j}:\bar{A}_t\to\bar{A}^\prime_t$ that is 
    $$
    f_{t,j}(z)=|z|^{\frac{m_j^\prime}{m_j}-1}z.
    $$
    The dilatation of $f_{t,j}$ is $K(f_{t,j})=\max\left\{\frac{m_j^\prime}{m_j},\frac{m_j}{m_j^\prime}\right\}$.

    For sufficiently large $t$, there is a quasiconformal mapping $f_{t,j}(z)$ for each of the cylinders and rectangles on $X_t$. By Lemma \ref{AffineMapping} and the construction of the mapping for $\bar{A}_t$, The quasiconformal mappings agree on the boundaries of the cylinders and rectangles except for the the portions on $\overline{\Gamma}_q$. Then we obtain a quasiconformal mapping $g_{t,i}:X_{t,i}\setminus\overline{\Gamma}_{q,i}\to Y_{t,i}\setminus\overline{\Gamma}_{q^\prime,i}$ for any $1\leq i\leq n$, and the dilatation is 
    $$
    K(g_{t,i})\leq\max_{1\leq j\leq N}\left\{\frac{m_j^\prime}{m_j},\frac{m_j}{m_j^\prime}\right\}+O(\varepsilon),
    $$
    Actually, we also get a quasiconformal mapping $g_t:X_t\setminus\overline{\Gamma}_q\to Y_t\setminus\overline{\Gamma}_{q^\prime}$ with the same dilatation.
\end{proof}

Before giving the upper estimate of the limiting Teichm\"uller distance between $\mathcal{R}_{q,X}(t)$ and $\mathcal{R}_{q^\prime,Y}(t)$, We recall some background about the boundary dilatation, the frame mapping theorem \cite{Str1976} and the main inequality of Reich and Strebel \cite{RS1974}. 

Let $f:X\to Y$ be a quasiconformal mapping between the Riemann surfaces $X$ and $Y$. 
We denote by $[f]$ the set of quasiconformal mappings from $X$ to $Y$ which are homotopic to $f$ modulo the boundary. The extremal dilatation of $[f]$ is defined as 
$$
K_0([f])=\inf\{K(g)\mid g\in[f]\}.
$$
The quasiconformal mapping $f$ is called extremal if $K(f)=K_0([f])$. The boundary dilatation of $f$ is defined as
$$
H^\ast(f)=\inf\{K(f\big|_{X\setminus E})\mid E\text{ is a compact subset of }X\},
$$
and the boundary dilatation of $[f]$ is 
$$
H([f])=\inf\{H^\ast(g)\mid g\in[f]\}.
$$ 
It is obvious that $H([f])\leq K_0([f])$.

We state the Strebel's frame mapping theorem and the main inequality of Reich and Strebel as follows. We refer the reader to \cite{GL2000} for more details.

\begin{theorem}[\cite{Str1976}]\label{FrameMappingThm}
    Let $f:X\to Y$ be a quasiconformal mapping between the Riemann surface $X$ and $Y$. If $H([f])< K_0([f])$, then there is a unique extremal quasiconformal mapping $f_0\in[f]$ with the Beltrami coefficient of the form $\mu_{f_0}=k\frac{\bar{q}}{|q|}$, where 
    $$
    0\leq k=\frac{K_0([f])-1}{K_0([f])+1}<1,
    $$
    and $q$ is a holomorphic quadratic differential on $X$ with $\|q\|=1$.
\end{theorem}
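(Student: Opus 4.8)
The plan is to realize the extremal map as a weak-$*$ limit of a minimizing sequence and then use the frame condition, together with the main inequality of Reich and Strebel, to force its complex dilatation into Teichm\"uller form; uniqueness will be a separate application of the same inequality. Write $K_0=K_0([f])$ and $k=(K_0-1)/(K_0+1)$. First I would choose a minimizing sequence $f_n\in[f]$ with $K(f_n)\to K_0$ and let $\mu_n$ denote the corresponding complex dilatations (transported to $X$ so that, after composing with a fixed map, everything may be treated as maps homotopic to the identity), so that $\|\mu_n\|_\infty\to k$. Since the closed ball of radius $k$ in $L^\infty(X)$ is weak-$*$ sequentially compact, I pass to a subsequence with $\mu_n\rightharpoonup\mu_0$ and $\|\mu_0\|_\infty\le k$. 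Using continuous dependence of the normalized solution of the Beltrami equation on its coefficient in the weak-$*$ topology, the map $f_0$ with dilatation $\mu_0$ lies in $[f]$ and satisfies $K(f_0)\le K_0$; by the definition of $K_0$ this is an equality, so $\|\mu_0\|_\infty=k$ and $f_0$ is extremal.

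Next I would invoke the necessary part of the Hamilton--Krushkal condition (itself a consequence of the main inequality of Reich and Strebel): extremality of $f_0$ yields a \emph{Hamilton sequence}, that is, a sequence $\varphi_n$ of integrable holomorphic quadratic differentials on $X$ with $\|\varphi_n\|_1=1$ and
$$
\Re\iint_X \mu_0\,\varphi_n\,dx\,dy \longrightarrow \|\mu_0\|_\infty = k.
$$
The heart of the argument is the third step, where the frame condition $H([f])<K_0([f])$ is used to rule out degeneration of this Hamilton sequence. The idea is that if every Hamilton sequence were to lose all of its mass into arbitrarily small neighborhoods of the punctures (the ideal boundary), then one could modify $f_0$ off a large compact set without increasing its essential dilatation there, producing maps in $[f]$ whose boundary dilatation $H^\ast$ is arbitrarily close to $K_0$, and hence $H([f])=K_0$, contradicting the hypothesis. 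Quantifying this dichotomy through the main inequality shows that under the frame condition a subsequence of the $\varphi_n$ converges locally uniformly on $X$ to a nonzero holomorphic quadratic differential $\varphi_0$ with $\|\varphi_0\|_1=1$ and with no escape of mass to the boundary.

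Granting this non-degenerate limit, I would pass to the limit in the Hamilton relation: local uniform convergence together with the absence of mass loss gives $\Re\iint_X \mu_0\,\varphi_0\,dx\,dy = k = k\|\varphi_0\|_1$. Since $|\mu_0|\le k$ almost everywhere, the integrand obeys $\Re(\mu_0\varphi_0)\le k|\varphi_0|$ pointwise, so equality of the integrals forces $\mu_0 = k\,\bar\varphi_0/|\varphi_0|$ almost everywhere. Setting $q=\varphi_0$ (already of unit norm) exhibits $\mu_{f_0}=k\,\bar q/|q|$ in Teichm\"uller form. For uniqueness, if $g\in[f]$ is another extremal map, then $f_0\circ g^{-1}$ is homotopic to the identity, and applying the main inequality of Reich and Strebel to it against $q$ produces a strict inequality unless $g$ has the same Teichm\"uller dilatation $k\,\bar q/|q|$, which is the standard Teichm\"uller uniqueness conclusion. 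I expect the principal obstacle to be precisely the third step: converting the analytic frame condition on the boundary dilatation into the statement that Hamilton sequences do not concentrate near the punctures, which requires a careful quantitative use of the main inequality together with a normal-families argument controlling the mass of integrable holomorphic quadratic differentials near the ideal boundary.
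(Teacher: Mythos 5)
The paper does not prove this statement: it is Strebel's frame mapping theorem, quoted from \cite{Str1976} with a pointer to \cite{GL2000} for details, so there is no in-paper argument to compare yours against. Your proposal is, in outline, the standard textbook proof --- existence of an extremal representative, the necessity half of the Hamilton--Krushkal condition, the frame condition preventing degeneration of a Hamilton sequence, and the pointwise equality argument $\Re(\mu_0\varphi_0)\leq k|\varphi_0|$ forcing Teichm\"uller form, with uniqueness from the Reich--Strebel inequality --- and that architecture is the right one.

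Two caveats. First, your justification of the existence step is false as stated: the normalized solution of the Beltrami equation does \emph{not} depend continuously on its coefficient in the weak-$*$ topology (rapidly oscillating coefficients converging weak-$*$ to $0$ need not produce maps converging to the identity; the limits solve a homogenized equation). The standard fix is a normal-families argument: a minimizing sequence $f_n$ with $K(f_n)\to K_0([f])$ is precompact for locally uniform convergence, the limit remains in $[f]$, and $K$ is lower semicontinuous under locally uniform convergence, which yields an extremal $f_0$ without identifying its Beltrami coefficient with any weak-$*$ limit. Relatedly, the necessity of the Hamilton condition is a duality/variational statement due to Hamilton and Krushkal; the Reich--Strebel main inequality gives the sufficiency direction, so your attribution is reversed, though this does not affect the logic. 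Second, you correctly isolate the third step as the crux but leave it as a sketch: the quantitative argument that a degenerating Hamilton sequence (one converging to $0$ locally uniformly) has $\limsup_n\Re\iint_X\mu_0\varphi_n\,dx\,dy$ bounded by a quantity controlled by $H([f])$, and hence strictly below $k$ under the frame condition, is exactly where the substance of Strebel's proof lives. As written, the proposal is a correct roadmap with one broken (but repairable) step rather than a complete proof.
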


\begin{theorem}[\cite{RS1974}]\label{MainInequality}
    Let $f$ and $g$ be two quasiconformal mappings from a Riemann surface $X$ to a Riemann surface $Y$, which are homotopic modulo the boundary. Then, for any integrable holomorphic quadratic differential $q=q(z)dz^2$, we have
    $$    
    \|q\|\leq\iint_X|q(z)|\frac{\left|1-\mu_f(z)\frac{q(z)}{|q(z)|}\right|^2}{1-|\mu_f(z)|^2}D_{g^{-1}}(f(z))dxdy,
    $$
    where $D_{g^{-1}}(w)$ is the dilatation of $g^{-1}$ at $w$ and $\mu_f$ is the Beltrami coefficient of the quasiconformal mapping $f$.
\end{theorem}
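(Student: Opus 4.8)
The plan is to reduce the statement to a length--area (Grötzsch) argument in the natural parameter of $q$, and to recover the asymmetric form involving $\mu_f$ and $D_{g^{-1}}$ through a pointwise chain--rule estimate for the composition $h=g^{-1}\circ f$. Since $f$ and $g$ are homotopic modulo the boundary, $h\colon X\to X$ is a homeomorphism homotopic to the identity modulo the boundary. Away from the critical graph $\Gamma_q$, which carries zero area, I would introduce the natural coordinate $\zeta=\xi+i\eta=\int\sqrt{q}$, so that $q=d\zeta^2$, the horizontal trajectories are the leaves $\{\eta=\text{const}\}$, and $|q(z)|\,dx\,dy=d\xi\,d\eta$. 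Writing $\theta=q/|q|$, the integrand on the right-hand side becomes $\tfrac{|1-\tilde\mu|^2}{1-|\tilde\mu|^2}\,d\xi\,d\eta$ in these coordinates (with $\tilde\mu$ the image of the relevant Beltrami density under the chart), so it suffices first to prove the inequality $\|q\|\le\iint_X|q|\,\tfrac{|1-\mu_h\theta|^2}{1-|\mu_h|^2}\,dx\,dy$ for $h$ and then to undo the composition.

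The first key step is a per-leaf length estimate. Fix a non-critical horizontal leaf $\beta_\eta$ and let $L(\eta)=\int_{\beta_\eta}d\xi$ be its $q$-length. Because $h$ is homotopic to the identity modulo the boundary, the image $h(\beta_\eta)$ joins the same endpoints, so its net horizontal displacement is unchanged; hence $\ell_q(h(\beta_\eta))\ge\int_{h(\beta_\eta)}d\xi=L(\eta)$. Writing $F=\zeta\circ h\circ\zeta^{-1}$ and computing the $q$-length of the image of the horizontal direction gives
\[
\int_{\beta_\eta}\bigl|F_\zeta+F_{\bar\zeta}\bigr|\,d\xi=\ell_q(h(\beta_\eta))\ge L(\eta).
\]

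Next I would run Cauchy--Schwarz twice and invoke an area balance. With $\mu_h=F_{\bar\zeta}/F_\zeta$ and Jacobian $J=|F_\zeta|^2-|F_{\bar\zeta}|^2$ one has $|F_\zeta+F_{\bar\zeta}|=\sqrt{J}\,\tfrac{|1+\mu_h|}{\sqrt{1-|\mu_h|^2}}$, so Cauchy--Schwarz along $\beta_\eta$ yields $L(\eta)^2\le A(\eta)B(\eta)$, where $A(\eta)=\int_{\beta_\eta}\tfrac{|1+\mu_h|^2}{1-|\mu_h|^2}\,d\xi$ and $B(\eta)=\int_{\beta_\eta}J\,d\xi$. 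Integrating $L(\eta)\le\sqrt{A(\eta)}\sqrt{B(\eta)}$ over $\eta$ and applying Cauchy--Schwarz again gives $\|q\|=\int L(\eta)\,d\eta\le(\int A)^{1/2}(\int B)^{1/2}$. Since $h$ is a homeomorphism of $X$ onto itself, the image area equals the total area, i.e. $\int B(\eta)\,d\eta=\iint_X J\,d\xi\,d\eta=\|q\|$; dividing then yields $\|q\|\le\int A(\eta)\,d\eta=\iint \tfrac{|1+\mu_h|^2}{1-|\mu_h|^2}\,d\xi\,d\eta$, which is the asserted inequality for $h$ (up to the standard identification of the sign and conjugation of $\mu_h$ in the natural parameter). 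Finally I would factor $h=g^{-1}\circ f$: a pointwise chain--rule estimate bounds the horizontal length-squared-to-Jacobian ratio of $h$ by that of $f$, namely $\tfrac{|1-\mu_f\theta|^2}{1-|\mu_f|^2}$, times a factor no larger than the dilatation $D_{g^{-1}}(f(z))$ of $g^{-1}$ at $f(z)$, which transforms the integrand into exactly the stated one.

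The main obstacle is the measure-theoretic justification of the per-leaf argument. One must use that quasiconformal maps are ACL (absolutely continuous on almost every line), so that the fundamental theorem of calculus and the evaluation of $\ell_q(h(\beta_\eta))$ are legitimate for almost every leaf; one must verify that $\Gamma_q$ together with the non-recurrent set carries zero transverse measure, so that Fubini in the $(\xi,\eta)$ coordinates applies; and one must argue carefully that homotopy relative to the boundary genuinely preserves the horizontal displacement even along recurrent (non-closed, dense) leaves, where endpoints are not individually meaningful and the displacement must be read off from the $\xi$-coordinate as a limit. Controlling these points, together with the pointwise factorization of the composite dilatation in the last step, is where the real work lies; by contrast the two Cauchy--Schwarz applications and the area-balance identity are routine once the per-leaf inequality is rigorously in place.
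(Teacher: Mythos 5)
The paper offers no proof of this statement: it is quoted as a known theorem of Reich and Strebel \cite{RS1974} and used as a black box in the proof of Lemma \ref{UpperEstimate}, so there is nothing internal to compare your argument against. Judged on its own, your sketch is the classical Reich--Strebel length--area proof, and its skeleton is correct: the per-leaf estimate, two applications of Cauchy--Schwarz, the area balance $\iint J_h\,d\xi\,d\eta=\|q\|$ (valid because a quasiconformal self-homeomorphism satisfies the change-of-variables formula), and then the pointwise chain-rule bound
\[
\frac{|\partial_\xi h|^2}{J_h}\;\le\;\frac{\Lambda_{g^{-1}}^2(f)\,|f_\xi|^2}{J_{g^{-1}}(f)\,J_f}\;=\;D_{g^{-1}}(f)\,\frac{|f_\xi|^2}{J_f},
\qquad \Lambda_{g^{-1}}=|(g^{-1})_w|+|(g^{-1})_{\bar w}|,
\]
which does legitimately convert the integrand for $h=g^{-1}\circ f$ into the stated one for $f$ and $g^{-1}$, since $\Lambda^2/J$ is exactly the pointwise dilatation. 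So the order of operations you propose (prove the inequality for $h$, then upgrade the integrand pointwise) is viable.

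Two points need more than the gesture you give them. First, the step you yourself identify as the crux --- that homotopy to the identity modulo the boundary forces $\bigl|\int_{h(\beta_\eta)}d\xi\bigr|\ge L(\eta)$ for almost every leaf --- is genuinely nontrivial on minimal components, where leaves are recurrent and have no endpoints; ``the net horizontal displacement is unchanged'' is not yet an argument there. The standard mechanism is to observe that the homotopy moves every point a $q$-distance at most a fixed constant $M$, deduce $\int_{h(\beta)}|d\xi|\ge L-2M$ for every horizontal segment $\beta$ of $q$-length $L$ with an error independent of the segment, and then average over longer and longer segments of each leaf (together with Fubini in the natural parameter and the ACL property) so that the additive error disappears; without some version of this the per-leaf inequality is unproved exactly where it matters. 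Second, the bookkeeping you defer is not purely cosmetic: along horizontal leaves the computation in the original coordinate produces $|1+\mu\,q/|q||^2$, while the stated integrand is $|1-\mu\,q/|q||^2$, which is what one gets by running the length--area argument along the \emph{vertical} trajectories of $q$ (equivalently, replacing $q$ by $-q$). You must commit to one convention and carry it through, or the final formula will come out with the wrong sign in the numerator.
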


Following the inspiration from \cite{JQ2012}, we can obtain an upper estimate of the limiting Teichm\"uller distance between $\mathcal{R}_{q,X}(t)$ and $\mathcal{R}_{q^\prime,Y}(t)$.

\begin{lemma}\label{UpperEstimate}
    Let $\mathcal{R}_{q,X}(t)$ and $\mathcal{R}_{q^\prime,Y}(t)$ be two Teichm\"uller rays. The vertical measured foliations $V(q)=\sum_{j=1}^{N}a_jG_j$ and $V(q^\prime)=\sum_{j=1}^{N}b_jG_j$ are absolutely continuous, where each $G_j$ is a simple closed curve or a uniquely ergodic measure. Then 
    $$
    \limsup_{t\to\infty}d_{\mathcal{T}}(X_t,Y_t)\leq\max\left\{\frac{1}{2}\log\max_{1\leq j\leq N}\left\{\frac{m^\prime_j}{m_j},\frac{m_j}{m^\prime_j}\right\},d_{\overline{\mathcal{T}}}(X_\infty,Y_\infty)\right\}.
    $$
\end{lemma}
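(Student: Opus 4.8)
The plan is to construct, for each large $t$, a quasiconformal mapping $F_t\colon X_t\to Y_t$ in the natural marking class whose maximal dilatation satisfies $K(F_t)\le\max\{K_1,K_2\}+O(\varepsilon)+o_t(1)$, where $K_1=\max_{1\le j\le N}\{m_j'/m_j,\,m_j/m_j'\}$ and $K_2=e^{2d_{\overline{\mathcal T}}(X_\infty,Y_\infty)}$; since $d_{\mathcal T}(X_t,Y_t)\le\tfrac12\log K(F_t)$, taking $\limsup_{t\to\infty}$ and then $\varepsilon\to0$ yields the asserted inequality, because $\tfrac12\log\max\{K_1,K_2\}=\max\{\tfrac12\log K_1,\,d_{\overline{\mathcal T}}(X_\infty,Y_\infty)\}$. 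As $d_{\overline{\mathcal T}}(X_\infty,Y_\infty)=\max_i d_{\mathcal T_i}(X_{\infty,i},Y_{\infty,i})$ and $X_t=\bigcup_i X_{t,i}$, $Y_t=\bigcup_i Y_{t,i}$ are glued along the critical graph, it suffices to build each piece $F_{t,i}\colon X_{t,i}\to Y_{t,i}$ so that the pieces agree along $\widetilde\Gamma_q$, and then assemble them.

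Two ingredients enter $F_{t,i}$. First, by the definition of $d_{\overline{\mathcal T}}$ I choose an almost-extremal map $\phi_i\colon X_{\infty,i}\to Y_{\infty,i}$ in the marking class with $K(\phi_i)\le K_2+\varepsilon$, whose behaviour near the punctures—the ends of the half-planes and semi-infinite cylinders—will be controlled through the boundary dilatation $H^\ast$ and Strebel's frame mapping theorem (Theorem \ref{FrameMappingThm}). Second, Lemma \ref{TheGluingMapping} supplies $g_{t,i}\colon X_{t,i}\setminus\overline\Gamma_{q,i}\to Y_{t,i}\setminus\overline\Gamma_{q',i}$ with $K(g_{t,i})\le K_1+O(\varepsilon)$ on the complement of the compact graph. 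The map $\phi_i$ records the conformal geometry concentrated on the compact core that the singular-flat limit forgets, while $g_{t,i}$ records the finite cylinder moduli and rectangle widths at parameter $t$ that the conformal limit forgets. By the pointed Gromov–Hausdorff convergence $(X_{t,i},x_i)\to(X_{\infty,i},x_i)$ of Lemma \ref{TheGromovHausdorffConvergence} and the isometric embeddings $X_{t,i}\hookrightarrow X_{\infty,i}$, a fixed compact neighbourhood $U_i$ of $\overline\Gamma_{q,i}$ sits isometrically inside $X_{\infty,i}$ for all large $t$, and $\phi_i(U_i)\subset Y_{t,i}$ for $t$ large; thus $\phi_i$ may be used on $U_i$ and $g_{t,i}$ on $X_{t,i}\setminus U_i$.

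The heart of the argument, and the main obstacle, is reconciling $\phi_i$ and $g_{t,i}$ along the interface $\partial U_i$, which lies inside the long cylinders and wide rectangles of $X_{t,i}$. On these product regions both maps are, up to bounded error, diagonal affine maps: $g_{t,i}$ is the affine stretch realizing the modulus/width ratio (dilatation $\le K_1$), while $\phi_i$ restricts to a map of dilatation $\le K_2$ with controlled behaviour toward the puncture by the frame mapping theorem. Since both are homotopic to the same marking, along $\partial U_i$ they differ only by a shear of magnitude bounded independently of $t$ (a bounded twisting and translation of the cylinder/strip coordinate). Because the moduli $e^{2t}m_j$ and the widths $e^{2t}\ell(R_i)$ diverge as $t\to\infty$, this bounded shear can be absorbed by interpolating over a region whose length tends to infinity, so the dilatation contributed by the interpolation tends to $1$; and blending two maps of dilatations $\le K_1$ and $\le K_2$ keeps the pointwise dilatation on the interface at most $\max\{K_1,K_2\}$. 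Hence $K(F_{t,i})\le\max\{K_1,K_2\}+O(\varepsilon)+o_t(1)$. This is consistent with the boundary-dilatation viewpoint: away from the compact core $F_t$ coincides with $g_{t,i}$, so $H^\ast(F_t)\le K_1+O(\varepsilon)$ and hence $H([F_t])\le K_1+O(\varepsilon)$, whereas the conformal geometry of the core forces at most $K_2$, and Theorem \ref{FrameMappingThm} is the tool I expect to invoke to see that these two contributions combine as a maximum rather than adding.

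Finally, since each $F_{t,i}$ is linear along the edges of $\widetilde\Gamma_q$—inherited from the boundary behaviour in Lemma \ref{TheGluingMapping} and from the matching across $U_i$—the pieces agree along the critical graph and assemble into a globally quasiconformal $F_t\colon X_t\to Y_t$ in the marking class, with $K(F_t)=\max_i K(F_{t,i})\le\max\{K_1,K_2\}+O(\varepsilon)+o_t(1)$. Taking $\limsup_{t\to\infty}$ and then $\varepsilon\to0$ gives the claimed bound. I expect the delicate point throughout to be the quantitative interpolation across $\partial U_i$: one must verify that the discrepancy between $\phi_i$ and $g_{t,i}$ is genuinely bounded uniformly in $t$, so that it is killed by the diverging geometry, and that the blend never creates dilatation beyond $\max\{K_1,K_2\}$, which is exactly where the control of $\phi_i$ near the punctures via $H^\ast$ and Theorem \ref{FrameMappingThm} is essential.
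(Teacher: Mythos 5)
Your overall architecture is the same as the paper's: decompose $X_t=\bigcup_i X_{t,i}$ inside the exhaustion of $X_{\infty,i}$, use the map $g_{t,i}$ of Lemma \ref{TheGluingMapping} on the ends, an (almost) extremal map $X_{\infty,i}\to Y_{\infty,i}$ on a fixed compact part containing $\overline{\Gamma}_{q,i}$, interpolate in between, and reassemble the pieces so that the boundary values come from the globally defined $g_t$. You also correctly identify the delicate point (the interface) and the relevant tools (boundary dilatation, Theorem \ref{FrameMappingThm}). However, precisely at the step you flag as delicate there is a genuine gap, and the mechanism you propose to close it would not work as stated. You assert that, being homotopic to the same marking, $\phi_i$ and $g_{t,i}$ ``differ only by a shear of magnitude bounded independently of $t$'' along $\partial U_i$, and that ``blending two maps of dilatations $\le K_1$ and $\le K_2$ keeps the pointwise dilatation on the interface at most $\max\{K_1,K_2\}$.'' Neither claim is justified. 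The extremal map $f_{\infty,i}$ is a Teichm\"uller map for the conformal structures of the limit surfaces and has no a priori affine relation to the singular-flat coordinates in which $g_{t,i}$ is affine; membership in one homotopy class does not bound the flat-metric discrepancy of the two maps on the interface, and even for a genuinely bounded discrepancy, a naive blend (e.g.\ a convex combination) of two quasiconformal maps can have dilatation far exceeding the maximum of the two, or fail to be injective. Your closing remark that you ``expect to invoke'' Theorem \ref{FrameMappingThm} so that the two contributions ``combine as a maximum rather than adding'' is exactly the missing proof, not a proof.

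What the paper actually does at this step is the following (adapted from \cite{JQ2012}). In a conformal coordinate $\phi$ around each puncture of $X_{\infty,i}$ it records the boundary values $h_t$ of $g_{t,i}$ on a circle $C_{r_t}$ deep in the end (with $r_t\to 0$) and of $f_{\infty,i}$ on a fixed circle $C_{r_2}$, and extends $h_t$ by an \emph{extremal} quasiconformal map $\Psi_t$ of the annulus $A_{r_t,r_2}$. The claim $K(\Psi_t)\le\max\{K(g_{t,i}),K(f_{\infty,i})\}+\varepsilon$ is proved by contradiction: if it fails, then since a concatenated comparison map $G_t$ shows $H(h_t)\le\max\{K(g_{t,i}),K(f_{\infty,i})\}$, Theorem \ref{FrameMappingThm} forces $\Psi_t$ to be a Teichm\"uller map with unit-norm quadratic differential $q_{r_t}$; a normal-family/diagonal argument shows $q_{r_t}\to 0$ locally uniformly on the punctured disc as $r_t\to 0$ (otherwise one produces an extremal limit map contradicting extremality of $f_{\infty,i}$); and the Reich--Strebel inequality (Theorem \ref{MainInequality}) applied to $\Psi_t$ and $G_t$ then yields the bound, a contradiction. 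This extremal-extension-plus-degeneration argument is the content your proposal is missing; without it, or a genuine substitute, the inequality $K(F_{t,i})\le\max\{K_1,K_2\}+O(\varepsilon)+o_t(1)$ is not established. A secondary inaccuracy: the pieces $X_{t,i}$ are glued to one another along the boundary curves $\partial X_{t,i}$ (curves made of vertical leaf segments and pieces of $\tau_t$, deep in the ends), not along the critical graph $\widetilde{\Gamma}_q$, and the paper must compose with Dehn twists before assembling; your final gluing step should be phrased accordingly.
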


\begin{proof}
    Let $X_\infty=\bigcup_{i=1}^{n}X_{\infty,i}$ be the limit surface of the Teichm\"uller ray $\mathcal{R}_{q,X}(t)$ and $Y_\infty=\bigcup_{i=1}^{n}Y_{\infty,i}$ be the limit surface of $\mathcal{R}_{q^\prime,Y}(t)$. For a decomposition $X_t=\bigcup_{i=1}^{n}X_{t,i}$ of $X_t$, since $X_{t,i}$ can be isometrically embedded in $X_{\infty,i}$ while preserving the graph $\Gamma_{q,i}$, we treat $X_{t,i}$ as a subsurface of $X_{\infty,i}$. 
    
    For each surface $X_t$ along $\mathcal{R}_{q,X}(t)$, we choose an appropriate horizontal segment $\tau_t$ for each minimal component of $X_t\setminus\Gamma_q$ such that as described in \S\ref{TheDecomposition}, we can obtain a sequence of surfaces along $\mathcal{R}_{q,X}(t)$, still denote by $X_{t,i}$, which forms an exhaustion of $X_{\infty,i}$. Similarly, for the surface $Y_t$ along $\mathcal{R}_{q^\prime,Y}(t)$, we select the corresponding horizontal segment $\tau^\prime_t$ for each minimal component of $Y_t\setminus\Gamma_{q^\prime}$ such that the first return mappings on $\tau_t$ and $\tau^\prime_t$ are identical. By properly choosing the sequence $X_{t,i}$ of surfaces along $\mathcal{R}_{q,X}(t)$, we can ensure that the corresponding sequence $Y_{t,i}$ along $\mathcal{R}_{q^\prime,Y}(t)$ also forms an exhaustion of $Y_{\infty,i}$.
    
    By Lemma \ref{TheGluingMapping}, for sufficiently large $t$, there is a quasiconformal mapping $g_{t,i}:X_{t,i}\setminus\overline{\Gamma}_{q,i}\to Y_{t,i}\setminus\overline{\Gamma}_{q^\prime,i}$ with the dilatation 
    $$
    K(g_{t,i})\leq\max_{1\leq j\leq N}\left\{\frac{m_j^\prime}{m_j},\frac{m_j}{m_j^\prime}\right\}+O(\varepsilon)\text{ as } \varepsilon\to 0.
    $$ 
    Let $f_{\infty,i}:X_{\infty,i}\to Y_{\infty,i}$ be the Teichm\"uller mapping between $X_{\infty,i}$ and $Y_{\infty,i}$. Then $d_{\mathcal{T}_i}(X_{\infty,i},Y_{\infty,i})=\frac{1}{2}\log K(f_{\infty,i})$.

    Let $p$ be a puncture of $X_{\infty,i}$ enclosed by a boundary of $X_{t,i}$ and $p^\prime$ be the corresponding puncture on $Y_{\infty,i}$. We choose a neighborhood $U$ of $p$ and a holomorphic mapping $\phi$ such that $\phi(U)=\mathbb{D}^\ast=\{z\in\mathbb{C}\mid 0<z<1\}$ and $p$ is mapped to $0\in\mathbb{C}$. Let $U^\prime$ be a neighborhood of $p^\prime$ with $f_{\infty,i}(U)\subset U^\prime$ and $\psi$ be a holomorphic mapping such that $\psi(U^\prime)=\mathbb{D}^\ast$ and $p^\prime$ is mapped to $0\in\mathbb{C}$. Each connected component of $X_{\infty,i}\setminus X_{t,i}$ is a region containing a puncture of $X_{\infty,i}$. Denote by $U_t\subset X_{\infty,i}\setminus X_{t,i}$ the region containing $p$, such that $\phi(U_t)\subset\mathbb{D}^\ast$ for sufficiently large $t$. Let $U_t^\prime\subset Y_{\infty,i}\setminus Y_{t,i}$ be the corresponding region containing $p^\prime$.

    Let $C_r=\{z\in\mathbb{C}\mid |z|=r\}$, $D_r=\{z\in\mathbb{C}\mid |z|<r\}$ and $A_{r,r^\prime}=\{z\in\mathbb{C}\mid r\leq|z|<r^\prime\}$. For any sufficiently large $t$, We choose two circles $C_{r_t}$ and $C_{r_2}$ with $r_2>r_t$ such that $\phi(U_t)\subset D_{r_t}$ and $\psi\circ g_{t,i}\circ\phi^{-1}(C_{r_t})\subset\psi\circ f_{\infty,i}\circ\phi^{-1}(D_{r_2})$. There is a conformal mapping $\psi^\prime$ such that the $\psi\circ f_{\infty,i}\circ\phi^{-1}(D_{r_2})\setminus\psi\circ g_{t,i}\circ\phi^{-1}(D_{r_t})$ is mapped onto an annulus $A_{r_3,r_4}$. Since $g_{t,i}$ and $f_{\infty,i}$ are quasiconformal mappings, we can show that $\psi^\prime\circ\psi\circ g_{t,i}\circ\phi^{-1}$ is a quasisymmetric map from $C_{r_t}$ to $C_{r_3}$ and $\psi^\prime\circ\psi\circ f_{\infty,i}\circ\phi^{-1}$ is a quasisymmetric map from $C_{r_2}$ to $C_{r_4}$. By Lemma $1$ in \cite{JQ2012}, there exists a quasiconformal mapping $\Phi^\prime_t$ from the annulus $A_{r_t,r_2}$ to $A_{r_3,r_4}$ such that $\Phi^\prime_t$ is equal to $\psi^\prime\circ\psi\circ g_{t,i}\circ\phi^{-1}$ on $C_{r_t}$ and equal to $\psi^\prime\circ\psi\circ f_{\infty,i}\circ\phi^{-1}$ on $C_{r_2}$. Let 
    \begin{equation*}
        h_t=
        \begin{cases}
            \psi\circ g_{t,i}\circ\phi^{-1}(z), & z\in C_{r_t}, \\
            \psi\circ f_{\infty,i}\circ\phi^{-1}(z), & z\in C_{r_2}.
        \end{cases}
    \end{equation*}
    Then $\Phi_t=\psi^{\prime-1}\circ\Phi^\prime_t$ is a quasiconformal extension of $h_t$ to the annulus $A_{r_t,r_2}$. Let $\Psi_t\in[\Phi_t]$ be an extremal quasiconformal extension of $h_t$ to the annulus $A_{r_t,r_2}$ and let 
    \begin{equation*}
        F_t(z)=
        \begin{cases}
            \psi\circ g_{t,i}\circ\phi^{-1}(z), & z\in D_{r_t}\setminus\phi(U_t); \\
            \Psi_t(z), & z\in A_{r_t,r_2}; \\
            \psi\circ f_{\infty,i}\circ\phi^{-1}(z), & z\in D_1\setminus D_{r_2}. 
        \end{cases}
    \end{equation*}
    For any $\varepsilon>0$, we show that 
    $$
    K(\Psi_t)\leq\max\{K(\psi\circ g_{t,i}\circ\phi^{-1}),K(\psi\circ f_{\infty,i}\circ\phi^{-1})\}+\varepsilon=\max\{K(g_{t,i}),K(f_{\infty,i})\}+\varepsilon
    $$
    for sufficiently small $r_t>0$ as $t\to\infty$.
    
    By contradiction, suppose that for all $r_t>0$ as $t\to\infty$, 
    \begin{equation}\label{assumption}
        K(\Psi_t)>\max\{K(g_{t,i}),K(f_{\infty,i})\}+\varepsilon.
    \end{equation}
    We can pick $r_t^\prime$ and $r_2^\prime$ with $r_t<r_t^\prime<r_2^\prime<r_2$ such that there similarly exists an extremal quasiconformal extension $\Phi_{r_t^\prime,r_2^\prime}$ from $A_{r_t^\prime,r_2^\prime}$ to $\psi\circ f_{\infty,i}\circ\phi^{-1}(D_{r_2^\prime})\setminus\psi\circ g_{t,i}\circ\phi^{-1}(D_{r_t^\prime})$ with $\Phi_{r_t^\prime,r_2^\prime}=\psi\circ g_{t,i}\circ\phi^{-1}$ on $C_{r_t^\prime}$ and $\Phi_{r_t^\prime,r_2^\prime}=\psi\circ f_{\infty,i}\circ\phi^{-1}$ on $C_{r_2^\prime}$. Let
    \begin{equation*}
        G_{t}(z)=
        \begin{cases}
            \psi\circ g_{t,i}\circ\phi^{-1}(z), & z\in A_{r_t,r_t^\prime}; \\
            \Phi_{r_t^\prime,r_2^\prime}(z), & z\in A_{r_t^\prime,r_2^\prime}; \\
            \psi\circ f_{\infty,i}\circ\phi^{-1}(z), & z\in A_{r_2^\prime,r_2}. 
        \end{cases}
    \end{equation*}
    Then the boundary dilatation of $h_t$ is 
    \begin{equation}\label{H(h_t)}
        H(h_t)\leq H^\ast(G_t)\leq\max\{K(g_{t,i}),K(f_{\infty,i})\}.
    \end{equation}
    Therefore, $K(\Psi_t)>H(h_t)$. By Theorem \ref{FrameMappingThm}, $\Psi_t(z)$ is an extremal quasiconformal mapping with Beltrami coefficient $\mu_{r_t}=k_{r_t}\frac{\bar{q}_{r_t}}{|q_{r_t}|}$ $(0<k_{r_t}<1)$, where $q_{r_t}=q_{r_t}(z)dz^2$ is the associated holomorphic quadratic differential with $\|q_{r_t}\|=1$.

    For each sufficiently large $t$, we can choose the $r_t>0$ such that $r_t\to 0$ as $t\to\infty$. We show that the sequence $q_{r_t}$ converges to $0$ uniformly on any compact subset of $D_{r_2}\setminus\{0\}$ as $r_t\to 0$. 

    By contradiction, suppose that there exist a sequence $\{r_{t,n}\}$ decreasing to $0$ and a non-zero holomorphic mapping $q_0$ on $D_{r_2}\setminus\{0\}$ such that $q_{r_{t,n}}\to q_0$ as $n\to\infty$, where $q_{r_{t,n}}$ is the associated holomorphic quadratic differential of the extremal quasiconformal mapping $\Psi_{t,n}$. Since $\{K(\Psi_{t,n})\}$ is non-increasing and bounded, then $k_{r_{t,n}}\to k_0$ and the Beltrami coefficient $\mu_{t,n}$ of $\Psi_{t,n}$ converges to $\mu_0=k_0\frac{\bar{q}_0}{|q_0|}$ uniformly on any compact subset of $D_{r_2}\setminus\{0\}$ as $n\to\infty$. 

    Since these mappings $F_{t,n}$ and their dilatations are uniformly bounded, for any compact subset $E_{r_2}$ of $D_{r_2}\setminus\{0\}$, there is a subsequence of $F_{t,n}$ with $E_{r_2}\subset A_{r_{t,n},r_2}$ such that the subsequence of $F_{t,n}$ is a normal family on $E_{r_2}$. Using Cantor diagonalization process, we can get a subsequence of $F_{t,n}$ which converges to a quasiconformal mapping $F_0$ uniformly on any compact subset of $D_{r_2}\setminus\{0\}$. Then $F_0$ is a quasiconformal mapping with Beltrami coefficient $\mu_0=k_0\frac{\bar{q}_0}{|q_0|}$. Since $\|q_0\|\leq\lim_{n\to\infty}\|q_{t,n}\|=1$, $F_0$ is an extremal quasiconformal mapping. By the assumption \eqref{assumption}, we obtain that 
    \begin{equation}\label{K(F_0)}
        K(F_0)\geq K(f_{\infty,i})+\varepsilon.
    \end{equation}
    From the construction of $F_{t,n}$, we get 
    $$
    F_0\in\left[\psi\circ f_{\infty,i}\circ\phi^{-1}\big|_{D_{r_2}\setminus\{0\}}\right].
    $$
    Then \eqref{K(F_0)} contradicts that $F_0$ is an extremal quasiconformal mapping. Therefore, the sequence $q_{r_t}$ converges to $0$ uniformly on any compact subset of $D_{r_2}\setminus\{0\}$ as $r_t\to 0$.

    It follows from \eqref{H(h_t)} that there is a compact subset $E$ of the annulus $A_{r_t,r_2}$ such that
    \begin{equation}\label{K(G_t)}
        K\left(G_t\big|_{A_{r_t,r_2}\setminus E}\right)<\max\{K(g_{t,i}),K(f_{\infty,i})\}+\frac{\varepsilon}{2}.
    \end{equation}
    Since $\Psi_t$ is homotopic to $G_t$ on $A_{r_t,r_2}$ modulo the boundary, applying Theorem \ref{MainInequality} to $\Psi_t$ and $G_t$ on $A_{r_t,r_2}$, we obtain
    \begin{align*}
        1=\|q_{r_t}\| & \leq\iint_{A_{r_t,r_2}}|q_{r_t}(z)|\frac{\left|1-\mu_{r_t}(z)\frac{q_{r_t}(z)}{|q_{r_t}(z)|}\right|^2}{1-|\mu_{r_t}(z)|^2}D_{G_t^{-1}}(\Psi_t(z))dxdy \\
        & =\iint_{A_{r_t,r_2}}\frac{|q_{r_t}(z)|}{K(\Psi_t)}D_{G_t^{-1}}(\Psi_t(z))dxdy.
    \end{align*}
    Thus,
    \begin{equation}\label{K(Psi_t)}
        \begin{aligned}
            K(\Psi_t)\leq &\iint_{A_{r_t,r_2}}|q_{r_t}(z)|D_{G_t^{-1}}(\Psi_t(z))dxdy \\
            = & \iint_{\Psi_t^{-1}\circ G_t(E)}|q_{r_t}(z)|D_{G_t^{-1}}(\Psi_t(z))dxdy \\
            & +\iint_{A_{r_t,r_2}\setminus\Psi_t^{-1}\circ G_t(E)}|q_{r_t}(z)|D_{G_t^{-1}}(\Psi_t(z))dxdy.
        \end{aligned}
    \end{equation}
    From the definitions of $\Psi_t$ and $G_t$, the dilatation $K(\Psi_t^{-1}\circ G_t)$ is uniformly bounded for any $r_t$. Thus, $\Psi_t^{-1}\circ G_t(E)$ is contained in a compact subset of $D_{r_2}\setminus\{0\}$ for any $r_t$. Since $q_{r_t}$ degenerates to $0$ as $r_t\to 0$, 
    \begin{equation}\label{Psi-1G(E)}
        \iint_{\Psi_t^{-1}\circ G_t(E)}|q_{r_t}(z)|D_{G_t^{-1}}(\Psi_t(z))dxdy\leq\frac{\varepsilon}{2}
    \end{equation}
    for all sufficiently small $r_t$. By the definition of $G_t$ and \eqref{K(G_t)},
    \begin{equation}\label{A-Psi-1G(E)}
        \iint_{A_{r_t,r_2}\setminus\Psi_t^{-1}\circ G_t(E)}|q_{r_t}(z)|D_{G_t^{-1}}(\Psi_t(z))dxdy\leq\max\{K(g_{t,i}),K(f_{\infty,i})\}+\frac{\varepsilon}{2}.
    \end{equation}
    Therefore, by \eqref{K(Psi_t)}, \eqref{Psi-1G(E)} and \eqref{A-Psi-1G(E)}, we obtain that for all sufficiently small $r_t$, 
    $$
    K(\Psi_t)\leq\max\{K(g_{t,i}),K(f_{\infty,i})\}+\varepsilon.
    $$
    This contradicts to the assumption \eqref{assumption}. Thus, for any $\varepsilon>0$, we have
    $$
    K(F_t)\leq\max\{K(g_{t,i}),K(f_{\infty,i})\}+\varepsilon
    $$
    for sufficiently small $r_t$ as $t\to\infty$. 

    If we consider the mapping $\psi^{-1}\circ F_t\circ\phi$ on the neighborhood of the puncture $p$ of $X_{\infty,i}$, the mapping $g_{t,i}$ can be glued to the mapping $f_{\infty,i}$ by $\psi^{-1}\circ F_t\circ\phi$ for sufficiently large $t$. The same case holds for any puncture of each $X_{\infty,i}$ that is formed by a semi-infinite cylinder or some half planes. Then we obtain a mapping $g_{t,i}^\prime:X_{t,i}\to Y_{t,i}$ with $g_{t,i}^\prime\big|_{\partial X_{t,i}}=g_{t,i}\big|_{\partial X_{t,i}}$ on the boundaries of $X_{t,i}$, and the dilatation of $g_{t,i}^\prime$ satisfies
    $$
    K(g_{t,i}^\prime)\leq\max\left\{\max_{1\leq j\leq N}\left\{\frac{m_j^\prime}{m_j},\frac{m_j}{m_j^\prime}\right\},K(f_{\infty,i})\right\}+O(\varepsilon) \text{ as } \varepsilon\to 0.
    $$
    By the construction of $g_{t,i}$ in Lemma \ref{TheGluingMapping}, the mappings $g_{t,i}^\prime$ agree along the boundaries of $X_{t,i}$. We glue the mappings $g_{t,i}^\prime$ along the boundaries of $X_{t,i}$ compositing with some Dehn-twists if necessary. Then for sufficiently large $t$, we get a mapping $g_t^\prime:X_t\to Y_t$ homotopic to $f_{t,2}\circ f_{t,1}^{-1}$, where $f_{t,1}$ and $f_{t,2}$ are the markings of $X_t$ and $Y_t$ respectively. The dilatation of $g_t^\prime$ is
    $$
    K(g_t^\prime)\leq \max\left\{\max_{1\leq j\leq N}\left\{\frac{m_j^\prime}{m_j},\frac{m_j}{m_j^\prime}\right\},\max_{1\leq j\leq N}K(f_{\infty,i})\right\}+O(\varepsilon) \text{ as } \varepsilon\to 0.
    $$
    This implies that 
    $$
    \limsup_{t\to\infty}d_{\mathcal{T}}(X_t,Y_t)\leq\max\left\{\frac{1}{2}\log\max_{1\leq j\leq N}\left\{\frac{m^\prime_j}{m_j},\frac{m_j}{m^\prime_j}\right\},d_{\overline{\mathcal{T}}}(X_\infty,Y_\infty)\right\}.
    $$
\end{proof}

\section{Lower estimate of the limiting Teichm\"uller distance}

We give a lower estimate of the limiting Teichm\"uller distance for a pair of Teichm\"uller rays $\mathcal{R}_{q,X}(t)$ and $\mathcal{R}_{q^\prime,Y}(t)$, where the vertical measured foliations of $q$ and $q^\prime$ are absolutely continuous. The vertical measured foliations $V(q)$ and $V(q^\prime)$ can be written as 
$$
V(q)=\sum_{j=1}^{N}a_jG_j, \text{ and } V(q^\prime)=\sum_{j_1}^{N}b_jG_j,
$$
where $G_j$ is a simple closed curve or an ergodic measure, and $a_j$, $b_j$ are positive real numbers.

Recall the definition of the extremal length of a family $\Gamma$ of rectifiable curves in a domain $D$ of a Riemann surface. Let $\rho=\rho(z)|dz|$ be a Borel measurable conformal metric on $D$. Then the length of a rectifiable curve $\gamma\in\Gamma$ is 
$$
\ell_\rho(\gamma)=\int_{\gamma}\rho(z)|dz|,
$$
and the area of $D$ is 
$$
\mathrm{Area}_{\rho}(D)=\iint_{D}\rho(z)^2dxdy.
$$
The extremal length of $\Gamma$ in $D$ is defined by 
$$
\lambda_D(\Gamma)=\sup_{\rho}\frac{\inf_{\gamma\in\Gamma}\ell_{\rho}(\gamma)^2}{\mathrm{Area}_{\rho}(D)},
$$
where $\rho$ takes over all Borel measurable conformal metric on $D$ with $\mathrm{Area}_{\rho}(D)<\infty$.

The extremal length of $\Gamma$ is independent of the domain containing the $\Gamma$ by the definition of extremal length. For two families of curves $\Gamma$ and $\Gamma^\prime$ in $D$, if each $\gamma\in\Gamma$ contains a $\gamma^\prime\in\Gamma^\prime$, then $\lambda_D(\Gamma)\geq\lambda_D(\Gamma^\prime)$. The extremal length has the quasiconformal distortion property which is
$$
\frac{1}{K}\lambda_D(\Gamma)\leq\lambda_{D^\prime}(f(\Gamma))\leq K\lambda_D(\Gamma),
$$
where $f$ is a $K$-quasiconformal mapping from $D$ to $D^\prime$.

Let $\alpha\in\mathcal{S}$ be a simple closed curve on $S$ and $X=[X,f]\in\mathcal{T}(S)$. The extremal length $\mathrm{Ext}_X(\alpha)$ of $\alpha$ on $X$ is defined as
$$
\mathrm{Ext}_X(\alpha)=\sup_{\rho}\frac{\ell_\rho(\alpha)^2}{\mathrm{Area}_\rho(X)},
$$
where
$$
\ell_\rho(\alpha)=\inf_{\alpha^\prime\sim f(\alpha)}\int_{\alpha^\prime}\rho(z)|dz|, \quad \mathrm{Area}_{\rho}(X)=\iint_{X}\rho(z)^2dxdy,
$$
and $\rho$ ranges over all Borel measurable conformal metric on $X$ with $\mathrm{Area}_{\rho}(X)<\infty$. There is another “geometric” definition as follows.
$$
\mathrm{Ext}_X(\alpha):=\inf_{C_{\alpha}}\frac{1}{\mathrm{Mod}(C_{\alpha})},
$$
where $C_{\alpha}$ ranges over all embedded cylinders on $X$ whose core curve is isotopic to $f(\alpha)$, and $\mathrm{Mod}(C_{\alpha})$ is the modulus of the cylinder $C_{\alpha}$ defined by the ratio of the height and circumference of $C_{\alpha}$.

The extremal length $\mathrm{Ext}_X(t\alpha)$ of a weighted simple closed curve $t\alpha\in\mathbb{R}_{\geq 0}\otimes\mathcal{S}$ is defined by 
\begin{equation}\label{ExtremalLength}
    \mathrm{Ext}_X(t\alpha)=t^2\mathrm{Ext}_X(\alpha).
\end{equation}
Kerckhoff \cite{Ker1980} showed that the extremal length function of $t\alpha\in\mathbb{R}_{\geq 0}\otimes\mathcal{S}$, defined as \eqref{ExtremalLength}, can extend continuously to $\mathcal{MF}(S)$ satisfying 
$$
\mathrm{Ext}_X(t\mathcal{F})=t^2\mathrm{Ext}_X(\mathcal{F}),
$$
for any $\mathcal{F}\in\mathcal{MF}(S)$ and $X\in\mathcal{T}(S)$. Kerckhoff also gave a useful formula of the Teichm\'uller distance by extremal length as follows.

\begin{theorem}[\cite{Ker1980}]\label{Kerckhoff'sFormula}
    Let $X,Y\in\mathcal{T}(S)$ be two Riemann surfaces. The Teichm\"uller distance between $X$ and $Y$ is
    $$
    d_{\mathcal{T}}(X,Y)=\frac{1}{2}\log\sup_{\mathcal{F}\in\mathcal{MF}(S)\setminus\{0\}}\frac{\mathrm{Ext}_Y(\mathcal{F})}{\mathrm{Ext}_X(\mathcal{F})}.
    $$
\end{theorem}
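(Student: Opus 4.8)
The plan is to prove the formula as two opposite inequalities, writing $K := e^{2d_{\mathcal{T}}(X,Y)}$ and interpreting it, via Teichm\"uller's theorem, as the maximal dilatation of the extremal Teichm\"uller map $f_0 : X \to Y$ associated to a unique unit-norm holomorphic quadratic differential $q_0$ on $X$. The inequality $\frac{1}{2}\log\sup_{\mathcal{F}}\frac{\mathrm{Ext}_Y(\mathcal{F})}{\mathrm{Ext}_X(\mathcal{F})} \le d_{\mathcal{T}}(X,Y)$ is the soft direction and follows from the quasiconformal distortion property of extremal length. The reverse inequality is the substantive direction and is obtained by exhibiting a single foliation, namely the horizontal foliation $H(q_0)$ of $q_0$, whose extremal-length ratio already realizes the value $K$.

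For the first inequality, let $g\in[f_0]$ be any quasiconformal map carrying the correct marking, with dilatation $K(g)$. For every $\alpha\in\mathcal{S}$ the distortion property $\frac{1}{K(g)}\mathrm{Ext}_X(\alpha)\le\mathrm{Ext}_Y(g(\alpha))\le K(g)\,\mathrm{Ext}_X(\alpha)$, together with the fact that $g(\alpha)$ and $\alpha$ determine the same class on $Y$, gives $\mathrm{Ext}_Y(\alpha)\le K(g)\,\mathrm{Ext}_X(\alpha)$. Taking the infimum over such $g$ replaces $K(g)$ by the extremal dilatation $K=e^{2d_{\mathcal{T}}(X,Y)}$. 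Since the weighted simple closed curves are dense in $\mathcal{MF}(S)$ and the extremal length extends continuously with $\mathrm{Ext}_X(t\mathcal{F})=t^2\mathrm{Ext}_X(\mathcal{F})$ (Kerckhoff), the bound $\mathrm{Ext}_Y(\mathcal{F})\le K\,\mathrm{Ext}_X(\mathcal{F})$ passes to all $\mathcal{F}\in\mathcal{MF}(S)\setminus\{0\}$, whence $\sup_{\mathcal{F}}\frac{\mathrm{Ext}_Y(\mathcal{F})}{\mathrm{Ext}_X(\mathcal{F})}\le K$.

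For the reverse inequality I would work directly along the Teichm\"uller geodesic from $X=X_0$ to $Y=X_{t_0}$ determined by $q_0$, where $e^{2t_0}=K$. The key input is that the flat metric $\rho=|q_0|^{1/2}$ is the extremal metric for $H(q_0)$, giving $\mathrm{Ext}_X(H(q_0))=\|q_0\|$: the inequality $\mathrm{Ext}_X(H(q_0))\le\|q_0\|$ is immediate by inserting $\rho$ into the analytic definition, and the matching lower bound is the classical duality between $H(q_0)$ and $V(q_0)$ realized by the same flat metric. Under the stretch $z\mapsto e^{t_0}x+ie^{-t_0}y$ in natural coordinates, $|q_0|^{1/2}$ on $X$ is carried to the flat metric of the terminal differential on $Y$; using the geometric description of extremal length through embedded cylinders one checks that the circumferences of horizontal leaves scale by $e^{t_0}$ while the transverse heights scale by $e^{-t_0}$, so that the modulus of each such cylinder is multiplied by $e^{-2t_0}$ and hence $\mathrm{Ext}_Y(H(q_0))=e^{2t_0}\mathrm{Ext}_X(H(q_0))=K\,\mathrm{Ext}_X(H(q_0))$. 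Symmetrically $\mathrm{Ext}_Y(V(q_0))=K^{-1}\mathrm{Ext}_X(V(q_0))$, confirming that $H(q_0)$, not $V(q_0)$, saturates the supremum. Therefore $\sup_{\mathcal{F}}\frac{\mathrm{Ext}_Y(\mathcal{F})}{\mathrm{Ext}_X(\mathcal{F})}\ge K$, and combining the two inequalities yields the asserted formula.

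The main obstacle is the reverse direction, and within it the identity $\mathrm{Ext}_X(H(q_0))=\|q_0\|$ together with its transformation law along the geodesic. The easy half of this identity costs nothing, but the sharp lower bound $\mathrm{Ext}_X(H(q_0))\ge\|q_0\|$, equivalently that no conformal metric beats the flat one, requires the length-area method and the reciprocity between the horizontal and vertical foliations, and it must be handled uniformly across both the cylindrical and the minimal (non-Jenkins-Strebel) components of $X\setminus\Gamma_{q_0}$. Care is also needed to justify that the supremum over $\mathcal{MF}(S)$ is genuinely attained (in the limit) by $H(q_0)$ rather than merely approached, which is exactly where the continuity of the extended extremal-length function does the bookkeeping.
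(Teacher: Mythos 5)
The paper does not prove this statement: it is imported verbatim from Kerckhoff's 1980 paper as a citation, so there is no internal proof to compare against. Your outline is nevertheless the standard (indeed Kerckhoff's own) argument: the inequality $\sup_{\mathcal{F}}\mathrm{Ext}_Y(\mathcal{F})/\mathrm{Ext}_X(\mathcal{F})\le e^{2d_{\mathcal{T}}(X,Y)}$ by quasiconformal distortion plus density of weighted simple closed curves, and the reverse inequality by showing that the horizontal foliation $H(q_0)$ of the initial Teichm\"uller differential saturates the ratio, via the identity $\mathrm{Ext}_X(H(q_0))=\|q_0\|$ and the scaling $\mathrm{Ext}_{X_t}(H(q_0))=e^{2t}\|q_0\|$ along the geodesic. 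That architecture is correct.

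Two points need repair. First, you have the directions of the two halves of $\mathrm{Ext}_X(H(q_0))=\|q_0\|$ swapped: since extremal length is a supremum over metrics, inserting the flat metric $|q_0|^{1/2}$ can only produce the \emph{lower} bound $\mathrm{Ext}_X(H(q_0))\ge\|q_0\|$ (the candidate metric realizes the value), whereas the statement ``no conformal metric beats the flat one'' is precisely the \emph{upper} bound $\mathrm{Ext}_X(H(q_0))\le\|q_0\|$, and it is this upper bound that requires the length--area method (equivalently the minimal norm property for the heights of $q_0$). As written, each half is attributed to the wrong argument; the conclusion survives because both inequalities are true, but the labelling would not compile into a correct proof. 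Second, your derivation of the scaling law by tracking circumferences and heights of embedded cylinders is only literally valid when $H(q_0)$ is Jenkins--Strebel; in the presence of minimal components there are no such cylinders for $H(q_0)$. The clean general argument is to note that $H(q_0)=e^{t_0}H(q_{t_0})$ as measured foliations on $Y=X_{t_0}$, so by quadratic homogeneity $\mathrm{Ext}_Y(H(q_0))=e^{2t_0}\mathrm{Ext}_Y(H(q_{t_0}))=e^{2t_0}\|q_{t_0}\|$, applying the same norm identity on the target surface rather than a cylinder-by-cylinder computation. There is also a definitional caveat: in this paper $\mathrm{Ext}_X(\mathcal{F})$ for a general $\mathcal{F}\in\mathcal{MF}(S)$ is defined by continuous extension from $\mathbb{R}_{\geq 0}\otimes\mathcal{S}$, not by an analytic supremum with a length functional $\ell_\rho(\mathcal{F})$ for foliations, so invoking ``the analytic definition'' for $H(q_0)$ requires either justifying that extension or quoting the Gardiner--Masur/Hubbard--Masur identity $\mathrm{Ext}_X(V(q))=\|q\|$ directly.
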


Let $X=[X,f]\in\mathcal{T}(S)$ and $\mathcal{G}\in\mathcal{MF}(X)$. For any $\mathcal{F}\in\mathcal{MF}(S)$, we define the intersection number $i(\mathcal{G},\mathcal{F})$ on $X$ as 
$$
i(\mathcal{G},\mathcal{F})=i(\mathcal{G},f_\ast(\mathcal{F})).
$$
Let $\mathcal{R}_{q,X}(t)$ be a Teichm\"uller ray and $V(q)=\sum_{j=1}^{N}a_jG_j$ be the vertical measured foliation where $a_j\geq 0$. We set
$$
\mathcal{E}_{q,X}(\mathcal{F})=\left\{\sum_{j=1}^{N}\frac{a_ji(G_j,\mathcal{F})^2}{i(G_j,H(q))}\right\}^{\frac{1}{2}}
$$
for any $\mathcal{F}\in\mathcal{MF}(S)$.

\begin{theorem}[\cite{Wal2019}]\label{limitofExt}
    Let $\mathcal{R}_{q,X}(t)$ be a Teichm\"uller ray and $V(q)=\sum_{j=1}^{N}a_jG_j$ be the vertical measured foliation where $a_j\geq 0$. Then for any $\mathcal{F}\in\mathcal{MF}(S)$, there is 
    $$
    \lim_{t\to\infty}e^{-2t}\mathrm{Ext}_{X_t}(\mathcal{F})=\sum_{j=1}^{N}\frac{a_ji(G_j,\mathcal{F})^2}{i(G_j,H(q))}=\mathcal{E}_{q,X}(\mathcal{F})^2.
    $$
\end{theorem}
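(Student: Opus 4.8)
The plan is to prove the two one-sided estimates $\liminf_{t\to\infty}e^{-2t}\mathrm{Ext}_{X_t}(\mathcal F)\ge \mathcal E_{q,X}(\mathcal F)^2$ and $\limsup_{t\to\infty}e^{-2t}\mathrm{Ext}_{X_t}(\mathcal F)\le \mathcal E_{q,X}(\mathcal F)^2$ separately, working with the decomposition of $X$, and hence of each $X_t$, into the cylinder and minimal components of $V(q)=\sum_{j=1}^{N}a_jG_j$ from \S\ref{TheDecomposition}. Write $\Omega_j$ for the component carrying $G_j$. A first observation is that the $|q|$-area of $\Omega_j$ equals $i(V(q)|_{\Omega_j},H(q)|_{\Omega_j})=a_j\,i(G_j,H(q))$, so along the ray its area in the flat metric $|e^{2t}q_t|$ is $e^{2t}a_j\,i(G_j,H(q))$, and $\sum_j a_j\,i(G_j,H(q))=\|q\|=1$. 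The lower bound will be proved for arbitrary $\mathcal F$, while for the upper bound it suffices, by density of $\mathbb R_{\ge0}\otimes\mathcal S$ in $\mathcal{MF}(S)$ together with the continuity of extremal length and of $i(\cdot,\cdot)$, to treat the case in which $\mathcal F$ is a weighted simple closed curve.

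For the lower bound I would use the analytic definition $\mathrm{Ext}_{X_t}(\mathcal F)=\sup_\rho \ell_\rho(\mathcal F)^2/\mathrm{Area}_\rho(X_t)$ with the explicit Borel metric $\rho$ equal to $\lambda_j\,|e^{2t}q_t|^{1/2}$ on each $\Omega_j$, the weights $\lambda_j>0$ to be chosen. Then $\mathrm{Area}_\rho(X_t)=e^{2t}\sum_j\lambda_j^2 a_j\,i(G_j,H(q))$, while the $\rho$-length of any curve representing $\mathcal F$ is at least its horizontal variation component by component; since the horizontal crossing of $\mathcal F$ inside $\Omega_j$ is exactly $i(\mathcal F,V(q)|_{\Omega_j})=a_j\,i(G_j,\mathcal F)$, one gets $\ell_\rho(\mathcal F)\ge e^{2t}\sum_j\lambda_j a_j\,i(G_j,\mathcal F)$. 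Optimizing the resulting ratio over $(\lambda_j)$ by Cauchy--Schwarz gives exactly $\sum_j a_j\,i(G_j,\mathcal F)^2/i(G_j,H(q))=\mathcal E_{q,X}(\mathcal F)^2$; note that this competitor is admissible for every $t$, so the lower bound in fact holds for all $t$ and the limit is approached from above.

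For the upper bound I would use the geometric definition $\mathrm{Ext}_{X_t}(\beta)=\inf_{C}1/\mathrm{Mod}(C)$ and assemble an efficient embedded cylinder around a curve $\beta$ representing $\mathcal F$ out of pieces threading the stretched components $\Omega_j$. On a cylinder component the full flat cylinder has modulus $e^{2t}m_j$ with $m_j=a_j/i(G_j,H(q))$, and $\beta$ crosses its core $G_j$ with weight $i(G_j,\mathcal F)$, so this component contributes $e^{2t}a_j\,i(G_j,\mathcal F)^2/i(G_j,H(q))$ to the extremal length. On a minimal component I would invoke the rectangular decomposition of \S\ref{TheDecomposition} together with the ergodicity of the transverse measure (Birkhoff's theorem, exactly as in the proof of Lemma \ref{TheGluingMapping}) to show that, after stretching, the component behaves asymptotically like a single flat cylinder of modulus $\sim e^{2t}m_j$ through which $\beta$ passes with uniformly distributed crossings, giving the same contribution $e^{2t}a_j\,i(G_j,\mathcal F)^2/i(G_j,H(q))$.

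The crux, and the step I expect to be the main obstacle, is to show that these per-component contributions add rather than combine as a maximum, equivalently that the near-extremal cylinder (or metric) distributes its modulus among the $\Omega_j$ precisely as in the Cauchy--Schwarz optimum above; the sanity check $\mathcal F=H(q)$, where the limit must equal $\|q\|=\sum_j a_j\,i(G_j,H(q))=1$ rather than $\max_j a_j\,i(G_j,H(q))$, already shows that additivity is essential. The components are glued along the critical graph $\Gamma_q$, whose $|e^{2t}q_t|$-area is negligible next to the stretched components, so the gluing should cost only a lower-order error; making this quantitative---bounding the modulus lost in the gluing regions, handling minimal components that carry several ergodic transverse measures via the ergodic decomposition, and checking that the component estimates assemble to $\mathcal E_{q,X}(\mathcal F)^2+o(1)$---is exactly where the uniform-distribution estimates of \S\ref{TheDecomposition} and the argument of Lemma \ref{TheGluingMapping} are needed.
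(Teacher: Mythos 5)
The paper does not prove this statement at all; it is imported verbatim from Walsh \cite{Wal2019}, so there is no internal proof to compare against and your attempt has to stand on its own. Your lower bound is essentially complete and correct: the piecewise-rescaled flat metric $\rho=\lambda_j|e^{2t}q_t|^{1/2}$ on $\Omega_j$, the area identity $\mathrm{Area}_{|q|}(\Omega_j)=a_j\,i(G_j,H(q))$, the length estimate via horizontal variation, and the Cauchy--Schwarz optimization over the $\lambda_j$ do give $e^{-2t}\mathrm{Ext}_{X_t}(\mathcal F)\ge\mathcal E_{q,X}(\mathcal F)^2$ for every $t$, provided each $G_j$ occupies its own component. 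Two caveats: the statement as given allows a minimal component to carry several mutually singular ergodic measures, in which case ``one weight per component'' only yields the weaker bound $\bigl(\sum_j a_ji(G_j,\mathcal F)\bigr)^2/\sum_j a_ji(G_j,H(q))$ on that component; to recover the full sum you must let $\rho$ take different constant values on the mutually disjoint Borel sets of $\mu_j$-generic leaves, which each carry full measure for their own $\mu_j$ and zero measure for the others. Also, for a general $\mathcal F$ you should either define $\ell_\rho$ of a measured foliation or run the bound on weighted simple closed curves and pass to the limit; since your inequality is uniform in $t$, this reduction is harmless for the lower bound.

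The genuine gap is the upper bound, which is the substantive half of the theorem, and you have described it rather than proved it. Exhibiting an embedded annulus around $\beta$ of modulus $(1+o(1))\,e^{-2t}/\mathcal E_{q,X}(\beta)^2$ requires: (i) fattening the $i(G_j,\beta)$ strands of the $q_t$-geodesic representative of $\beta$ inside each component into parallel strips filling that component; (ii) the reverse serial composition law for extremal length, to show that the moduli of the strips combine additively into $\sum_j e^{2t}a_ji(G_j,\beta)^2/i(G_j,H(q))$ rather than as a maximum; (iii) Birkhoff equidistribution to guarantee the strands are asymptotically evenly spread over the rectangles of each minimal component, so the strips have comparable widths; and (iv) a quantitative bound showing the gluing regions along $\Gamma_q$ cost only $o(e^{2t})$ of extremal length. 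You correctly identify all of this as ``the crux'' and ``the main obstacle,'' but naming the obstacle is not overcoming it: as written, the proposal establishes only $\liminf_{t\to\infty}e^{-2t}\mathrm{Ext}_{X_t}(\mathcal F)\ge\mathcal E_{q,X}(\mathcal F)^2$, not the existence or the value of the limit. Your sanity check with $\mathcal F=H(q)$ is well chosen and shows you understand why additivity is the whole point, but the proof of additivity is precisely what Walsh's argument supplies and what is absent here.
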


\begin{lemma}[\cite{Wal2019}]\label{ratioofE}
    Let $\mathcal{R}_{q,X}(t)$, $\mathcal{R}_{q^\prime,Y}(t)$ be two Teichm\"uller rays and $V(q)=\sum_{j=1}^{N}a_jG_j$ be the vertical measured foliation where $a_j>0$. If the vertical measured foliation $V(q^\prime)$ can be written as $V(q^\prime)=\sum_{j=1}^{N}b_jG_j$ where $b_j\geq 0$, then 
    $$
    \sup_{\mathcal{F}\in\mathcal{MF}(S)\setminus Z}\frac{\mathcal{E}_{q^\prime,Y}(\mathcal{F})^2}{\mathcal{E}_{q,X}(\mathcal{F})^2}=\max_{1\leq j\leq N}\frac{b_ji(G_j,H(q))}{a_ji(G_j,H(q^\prime))},
    $$
    where $Z=\{\mathcal{F}\in\mathcal{MF}(S)\mid \mathcal{E}_{q,X}(\mathcal{F})=\mathcal{E}_{q^\prime,Y}(\mathcal{F})=0\}$. Otherwise, the supremum is $+\infty$.
\end{lemma}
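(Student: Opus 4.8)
The plan is to strip the statement down to an elementary inequality about ratios of non-negative weighted sums and then to supply the geometric input that makes the bound sharp. Write $p_j=a_j/i(G_j,H(q))$ and $r_j=b_j/i(G_j,H(q^\prime))$, so that $p_j>0$ and $r_j\ge 0$, and for each $\mathcal F\in\mathcal{MF}(S)$ set $x_j=x_j(\mathcal F)=i(G_j,\mathcal F)^2\ge 0$. Then $\mathcal E_{q,X}(\mathcal F)^2=\sum_j p_j x_j$ and $\mathcal E_{q^\prime,Y}(\mathcal F)^2=\sum_j r_j x_j$, so the quantity under the supremum is exactly the weighted mediant $\bigl(\sum_j r_j x_j\bigr)\big/\bigl(\sum_j p_j x_j\bigr)$, and the asserted right-hand side is $\max_j r_j/p_j$.

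First I would establish the upper bound. For $\mathcal F\notin Z$ one has $\mathcal E_{q,X}(\mathcal F)>0$: since every $p_j>0$, the vanishing $\sum_j p_j x_j=0$ would force $x_j=0$ for all $j$, hence $\mathcal E_{q^\prime,Y}(\mathcal F)=0$ as well and $\mathcal F\in Z$. Thus the denominator is strictly positive, and the ratio is a genuine weighted average of the numbers $r_j/p_j$ with non-negative weights $p_j x_j$; therefore it is at most $\max_j r_j/p_j=\max_j \frac{b_j\,i(G_j,H(q))}{a_j\,i(G_j,H(q^\prime))}$.

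The substantive part is the matching lower bound. Let $j_0$ attain $\max_j r_j/p_j$; it suffices to produce $\mathcal F$ with $i(G_{j_0},\mathcal F)>0$ while the remaining $x_j$ are negligible against $x_{j_0}$, so that the ratio approaches $r_{j_0}/p_{j_0}$. Since the $G_j$ are the pairwise-disjoint components of $V(q)$, I would build the witness from the subsurface carrying $G_{j_0}$. If $G_{j_0}$ is a uniquely ergodic minimal component supported on $\Omega_{j_0}$, I take $\mathcal F$ to be an essential simple closed curve contained in $\Omega_{j_0}$: being disjoint from the supports of the other components it satisfies $i(G_j,\mathcal F)=0$ for $j\ne j_0$, while a minimal (hence filling) foliation meets every essential curve of its subsurface, giving $i(G_{j_0},\mathcal F)>0$ and the exact value $r_{j_0}/p_{j_0}$. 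If $G_{j_0}$ is the core of a cylinder, I take $\mathcal F$ transverse to that core and, using the independence of the components, arrange that the unavoidable cross-intersections with the neighbouring pieces are driven to zero relative to $i(G_{j_0},\mathcal F)$. For the remaining \emph{otherwise} alternative I would argue that if $V(q^\prime)$ is not of the form $\sum_j b_j G_j$, then it carries an ergodic component $G^\ast$ distinct from all $G_j$; choosing $\mathcal F$ that meets $G^\ast$ but is disjoint from every $G_j$ gives $\mathcal E_{q,X}(\mathcal F)=0<\mathcal E_{q^\prime,Y}(\mathcal F)$, so the supremum is $+\infty$.

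The hard part will be exactly the sharpness of the lower bound, namely the construction of a test foliation concentrating on a prescribed component. The delicate case is a cylinder core, for which one cannot in general find a transverse foliation disjoint from the other components: any curve crossing the core may be forced through adjacent minimal pieces. Controlling these forced cross-terms and verifying that they become negligible against $i(G_{j_0},\mathcal F)$ is where the real content of the lemma lies, and I would expect this to require a careful quantitative use of the ergodicity of the $G_j$ rather than a purely topological disjointness argument.
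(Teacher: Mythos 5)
The paper does not actually prove this lemma: it is imported verbatim from Walsh \cite{Wal2019}, so there is no internal argument to compare against, and your proposal has to be judged on whether it would constitute a complete proof. Your reduction to the mediant inequality and the resulting upper bound are correct and complete: for $\mathcal{F}\notin Z$ the positivity of every $p_j$ forces $\mathcal{E}_{q,X}(\mathcal{F})>0$, and the ratio is a weighted average of the $r_j/p_j$ with weights $p_jx_j$. The lower bound is also fine when $G_{j_0}$ is a uniquely ergodic minimal component: a non-peripheral essential simple closed curve inside the minimal domain meets $G_{j_0}$ but no other component, and such a curve exists because a minimal domain has complexity at least one.

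The genuine gaps are precisely the two cases you flag but do not close. For a cylinder core $G_{j_0}$ whose two boundary circles lie in different components of the critical graph $\Gamma_q$, every closed curve crossing the cylinder must traverse some minimal domain $\Omega_j$, so $i(G_j,\mathcal{F})>0$ is unavoidable and the supremum need not be attained; what rescues the statement is that arcs running along leaves of $V(q)$ (in particular along $\Gamma_q$, and along long leaf segments inside $\Omega_j$, by density of leaves) carry arbitrarily small transverse measure, so one can build curves $\mathcal{F}_n$ crossing $A_{j_0}$ once whose return path tracks a leaf until it nearly closes up, giving $i(G_j,\mathcal{F}_n)\to 0$ while $i(G_{j_0},\mathcal{F}_n)\geq 1$. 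You correctly suspect this is where the content lies, but the construction is not supplied. More seriously, your treatment of the \emph{otherwise} alternative would fail as stated: $V(q^\prime)$ can fail to be a non-negative combination of the $G_j$ because some minimal domain carries two mutually singular ergodic measures $\mu=G_j$ and $\nu=G^\ast$ on the \emph{same} topological foliation. Then $G^\ast$ and $G_j$ have identical support, no $\mathcal{F}$ can meet $G^\ast$ while being topologically disjoint from $G_j$, and the naive witness $\mathcal{F}=V(q)$ satisfies $i(\nu,V(q))=0$ and so lies in $Z$. To force the supremum to $+\infty$ in this case one must produce curves $\beta_n$ following leaves that equidistribute towards $\nu$ rather than $\mu$, so that $i(\mu,\beta_n)/i(\nu,\beta_n)\to 0$; this is a Birkhoff-type ergodic argument, not a disjointness argument, and it is the step your sketch replaces with a claim that is false in this configuration.
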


\begin{xrem}
    We note that if $V(q)$ and $V(q^\prime)$ are absolutely continuous, then 
    $$
    \sup_{\mathcal{F}\in\mathcal{MF}(S)\setminus Z}\frac{\mathcal{E}_{q^\prime,Y}(\mathcal{F})^2}{\mathcal{E}_{q,X}(\mathcal{F})^2} \text{ and } \sup_{\mathcal{F}\in\mathcal{MF}(S)\setminus Z}\frac{\mathcal{E}_{q,X}(\mathcal{F})^2}{\mathcal{E}_{q^\prime,Y}(\mathcal{F})^2}
    $$
    are both bounded.
\end{xrem}

The following estimate is gave by Amano in \cite{Ama2014}. For the completeness of the paper, we state the result and give the proof.

\begin{lemma}\label{LowerEstimate-Modulus}
    Let $\mathcal{R}_{q,X}(t)$ and $\mathcal{R}_{q^\prime,Y}(t)$ be two Teichm\"uller rays. If the vertical measured foliations $V(q)=\sum_{j=1}^{N}a_jG_j$ and $V(q^\prime)=\sum_{j=1}^{N}b_jG_j$ are absolutely continuous. Then
    $$
    \liminf_{t\to\infty}d_{\mathcal{T}}(X_t,Y_t)\geq\frac{1}{2}\log\max_{1\leq j\leq N}\left\{\frac{m^\prime_j}{m_j},\frac{m_j}{m^\prime_j}\right\}.
    $$
\end{lemma}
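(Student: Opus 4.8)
The plan is to combine Kerckhoff's extremal-length formula for the Teichm\"uller distance (Theorem \ref{Kerckhoff'sFormula}) with the asymptotics of extremal length along a Teichm\"uller ray (Theorem \ref{limitofExt}) and the explicit computation of the relevant supremum (Lemma \ref{ratioofE}). The guiding idea is that a supremum always dominates the value at a single point, which lets me sidestep any interchange of a limit with a supremum: I pass to the limit first for a fixed foliation, and only afterwards take the supremum.

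First I would apply Theorem \ref{Kerckhoff'sFormula} to write
$$
e^{2d_{\mathcal{T}}(X_t,Y_t)}=\sup_{\mathcal{F}\in\mathcal{MF}(S)\setminus\{0\}}\frac{\mathrm{Ext}_{Y_t}(\mathcal{F})}{\mathrm{Ext}_{X_t}(\mathcal{F})}.
$$
Since $V(q)$ and $V(q^\prime)$ are absolutely continuous with all $a_j,b_j>0$, for any $\mathcal{F}\notin Z$ (with $Z$ as in Lemma \ref{ratioofE}) one has $\mathcal{E}_{q,X}(\mathcal{F})>0$. Fixing such an $\mathcal{F}$, I bound the supremum below by the single ratio at $\mathcal{F}$. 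Rewriting that ratio as $\bigl(e^{-2t}\mathrm{Ext}_{Y_t}(\mathcal{F})\bigr)/\bigl(e^{-2t}\mathrm{Ext}_{X_t}(\mathcal{F})\bigr)$ and applying Theorem \ref{limitofExt} to the numerator and denominator separately (the denominator tending to the \emph{nonzero} limit $\mathcal{E}_{q,X}(\mathcal{F})^2$), I obtain
$$
\liminf_{t\to\infty}e^{2d_{\mathcal{T}}(X_t,Y_t)}\geq\lim_{t\to\infty}\frac{\mathrm{Ext}_{Y_t}(\mathcal{F})}{\mathrm{Ext}_{X_t}(\mathcal{F})}=\frac{\mathcal{E}_{q^\prime,Y}(\mathcal{F})^2}{\mathcal{E}_{q,X}(\mathcal{F})^2}.
$$
Because the left-hand side does not depend on $\mathcal{F}$, I then take the supremum over $\mathcal{F}\in\mathcal{MF}(S)\setminus Z$ on the right and invoke Lemma \ref{ratioofE}, which evaluates this supremum as $\max_{1\leq j\leq N}\frac{b_ji(G_j,H(q))}{a_ji(G_j,H(q^\prime))}=\max_{1\leq j\leq N}\frac{m^\prime_j}{m_j}$.

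To finish, I would exchange the roles of $X$ and $Y$ throughout the above argument; since $d_{\mathcal{T}}(X_t,Y_t)=d_{\mathcal{T}}(Y_t,X_t)$, this yields the companion bound $\liminf_{t\to\infty}e^{2d_{\mathcal{T}}(X_t,Y_t)}\geq\max_{1\leq j\leq N}\frac{m_j}{m^\prime_j}$. Combining the two estimates and applying $\frac{1}{2}\log$ (which is continuous and increasing) gives the claimed inequality. The only delicate point is the legitimacy of replacing the supremum by a fixed $\mathcal{F}$ before passing to the limit; this is precisely what makes the argument go through with no uniformity in $\mathcal{F}$, and the absolute-continuity hypothesis is what keeps the denominator $\mathcal{E}_{q,X}(\mathcal{F})^2$ positive away from $Z$, so that the pointwise limit is genuinely finite and equals the asserted ratio.
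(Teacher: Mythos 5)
Your proposal is correct and follows essentially the same route as the paper: the paper's proof is exactly the interchange $\liminf_t \sup_{\mathcal{F}} \geq \sup_{\mathcal{F}} \liminf_t$ applied to Kerckhoff's formula, followed by Theorem \ref{limitofExt}, Lemma \ref{ratioofE}, and symmetry of the distance. Your more careful justification of that interchange (fixing $\mathcal{F}$ first, using positivity of $\mathcal{E}_{q,X}(\mathcal{F})^2$ off $Z$) is a welcome elaboration of the same argument.
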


\begin{proof}
    By Theorem \ref{Kerckhoff'sFormula}, \ref{limitofExt} and Lemma \ref{ratioofE}, we obtain that
    \begin{align*}
        \liminf_{t\to\infty}d_{\mathcal{T}}(X_t,Y_t) & =\liminf_{t\to\infty}\frac{1}{2}\log\sup_{\mathcal{F}\in\mathcal{MF}(S)\setminus\{0\}}\frac{\mathrm{Ext}_{Y_t}(\mathcal{F})}{\mathrm{Ext}_{X_t}(\mathcal{F})} \\
        & \geq \frac{1}{2}\log\sup_{\mathcal{F}\in\mathcal{MF}(S)\setminus Z}\liminf_{t\to\infty}\frac{e^{-2t}\mathrm{Ext}_{Y_t}(\mathcal{F})}{e^{-2t}\mathrm{Ext}_{X_t}(\mathcal{F})} \\
        & = \frac{1}{2}\log\max_{1\leq j\leq N}\frac{m^\prime_j}{m_j}.
    \end{align*}
    Since the symmetry of the distance, we can get the desired estimate.
\end{proof}

\begin{lemma}\label{LowerEstimate-LimitingSurfaces}
    Let $\mathcal{R}_{q,X}(t)$ and $\mathcal{R}_{q^\prime,Y}(t)$ be two Teichm\"uller rays, and $V(q)$ and $V(q^\prime)$ are absolutely continuous. Let $f_t:X_t\to Y_t$ be the Teichm\"uller mapping between $X_t$ and $Y_t$. Then there is a quasiconformal mapping $f_\infty:X_\infty\to Y_\infty$ induced by the sequence $f_t$, where $X_\infty$ and $Y_\infty$ are the limit surfaces of $\mathcal{R}_{q,X}(t)$ and $\mathcal{R}_{q^\prime,Y}(t)$ respectively. Moreover, 
    $$
    \liminf_{t\to\infty}d_{\mathcal{T}}(X_t,Y_t)\geq d_{\overline{\mathcal{T}}}(X_\infty,Y_\infty).
    $$
\end{lemma}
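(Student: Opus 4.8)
The plan is to realize $f_\infty$ as a locally uniform limit of the Teichm\"uller mappings $f_t$ via a normal families argument, and then to bound the maximal dilatation of the limit from above by the liminf of $K(f_t)$ using lower semicontinuity. First I would record that the dilatations are uniformly bounded for large $t$: since $V(q)$ and $V(q^\prime)$ are absolutely continuous, Lemma \ref{UpperEstimate} gives $\limsup_{t\to\infty}d_{\mathcal{T}}(X_t,Y_t)<\infty$, so that $K(f_t)=e^{2d_{\mathcal{T}}(X_t,Y_t)}\leq K_0$ for some $K_0$ and all large $t$. Using the construction of \S\ref{TheDecomposition}, I would choose the horizontal segments so that the subsurfaces $X_{t,i}$ and $Y_{t,i}$ isometrically embed into $X_{\infty,i}$ and $Y_{\infty,i}$ and exhaust them; this lets me regard each restriction $f_t\big|_{X_{t,i}}$ as a $K_0$-quasiconformal map between growing subdomains of the fixed surfaces $X_{\infty,i}$ and $Y_{\infty,i}$.

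Next I would extract the limit component by component. Fix $i$ and a basepoint singularity $x_i\in\Gamma_{q,i}$, with $y_i\in\Gamma_{q^\prime,i}$ the corresponding singularity of $Y_t$. The crucial point is that $f_t(x_i)$ stays at uniformly bounded distance from $y_i$, and more generally that the image of any fixed compact set $E\subset X_{\infty,i}$ lands, for large $t$, in a fixed compact subset of $Y_{\infty,i}$ rather than escaping into the half-plane and semi-infinite cylinder ends. Granting this, the family $\{f_t\big|_E\}$ has uniformly bounded dilatation and controlled images, so by a normal family argument together with a Cantor diagonalization over an exhaustion of $X_{\infty,i}$, along a subsequence $t_k\to\infty$ realizing $\liminf_{t\to\infty}d_{\mathcal{T}}(X_t,Y_t)$ the maps $f_{t_k}$ converge locally uniformly to a quasiconformal homeomorphism $f_{\infty,i}:X_{\infty,i}\to Y_{\infty,i}$. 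Because each $f_t$ is marking-compatible, $f_{\infty,i}$ lies in the homotopy class defining $d_{\mathcal{T}_i}(X_{\infty,i},Y_{\infty,i})$, and gluing the $f_{\infty,i}$ produces the asserted map $f_\infty:X_\infty\to Y_\infty$.

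Finally, lower semicontinuity of the maximal dilatation under local uniform convergence gives $K(f_{\infty,i})\leq\liminf_k K(f_{t_k})$, whence
$$
d_{\mathcal{T}_i}(X_{\infty,i},Y_{\infty,i})\leq\frac{1}{2}\log K(f_{\infty,i})\leq\liminf_{t\to\infty}d_{\mathcal{T}}(X_t,Y_t).
$$
Taking the maximum over $1\leq i\leq n$ and recalling that $d_{\overline{\mathcal{T}}}(X_\infty,Y_\infty)=\max_{1\leq i\leq n}d_{\mathcal{T}_i}(X_{\infty,i},Y_{\infty,i})$ yields the claim.

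The hard part will be establishing that the limiting map neither degenerates nor escapes into the noncompact ends: concretely, that for each fixed compact $E$ the image $f_t(E)$ remains in a fixed compact part of $Y_{\infty,i}$, and symmetrically for $f_t^{-1}$, so that $f_{\infty,i}$ is a genuine quasiconformal homeomorphism onto $Y_{\infty,i}$ and not a collapse. This rests on controlling how the extremal map $f_t$ moves the critical graph and the chosen basepoints; the absolute continuity of $V(q)$ and $V(q^\prime)$, the uniform dilatation bound, and the exhaustion structure of \S\ref{TheDecomposition} are what make this control possible, and verifying it rigorously via Proposition \ref{TheGHconvergenceWithDifferentBasepoints} is where the real work lies.
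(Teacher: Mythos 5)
Your outline reproduces the architecture of the paper's argument --- uniform bound on $K(f_t)$, exhaustion of $X_{\infty,i}$ by the subsurfaces $X_{t,i}$, normal families plus Cantor diagonalization along a subsequence realizing the liminf, and lower semicontinuity of the dilatation --- but it leaves the step you yourself flag as ``the hard part'' entirely unproved, and that step is the actual mathematical content of the lemma. Two things are missing. First, you assert that $f_t(x_i)$ stays at bounded distance from $y_i$ for a \emph{fixed} pair of corresponding singularities; this is not obvious and is not what the paper proves. A priori the extremal map $f_t$, whose associated quadratic differential is unrelated to $q_t$ and $q^\prime_t$, need not send $\Gamma_{q,i}$ anywhere near $\Gamma_{q^\prime,i}$. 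The paper instead produces a $t$-dependent anchor: it chooses two intersecting simple closed curves $\overline{\alpha}_i,\overline{\beta}_i\subset\Gamma_{q,i}$ consisting of saddle connections (possible exactly when $\mathcal{T}(S_{\infty,i})$ is nontrivial; the case $g_i=0$, $n_i\leq 3$ is handled separately), observes that $f_t(\overline{\alpha}_i)$ is homotopic to $\overline{\alpha}^\prime_i$ and hence must meet $\overline{\beta}^\prime_i\subset\Gamma_{q^\prime,i}$, and takes $y_{t,i}\in f_t(\overline{\alpha}_i)\cap\overline{\beta}^\prime_i$ with $x_{t,i}=f_t^{-1}(y_{t,i})\in\Gamma_{q,i}$. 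Compactness of the finite critical graphs, which are preserved along the rays, then bounds $d_t(x_{t,i},x_i)$ and $d^\prime_t(y_{t,i},y_i)$, and Proposition \ref{TheGHconvergenceWithDifferentBasepoints} applies.

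Second, even with the anchor in place, one must show that the image of a fixed compact subsurface $X_{t_0,i}$ under $f_t$ stays inside $Y_{t,i}$ for large $t$, i.e.\ does not leak into the half-plane and semi-infinite cylinder ends. The paper does this by contradiction with an extremal length computation: if some $\overline{x}\in\partial X_{t_0,i}$ had $f_t(\overline{x})\notin Y_{t,i}$, the image of a thin quadrilateral $Q$ joining $x_{t,i}$ to $\overline{x}$ would traverse an annulus $A\subset Y_{t,i}$ homotopic to the boundary of $Y_{t,i}$; since $\mathrm{Mod}(A)$ can be chosen larger than $K(f_t)\lambda_{X_{t_0,i}}(\Gamma)$ (the ends have unbounded modulus while $K(f_t)$ is uniformly bounded by Ivanov's theorem), this contradicts the quasiconformal distortion property of extremal length. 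Your proposal names this issue but offers no mechanism for resolving it; ``controlling how the extremal map moves the critical graph'' is precisely the statement to be proved, not a tool one may invoke. Without these two ingredients the normal family could degenerate --- the limit could fail to be surjective or could collapse into a puncture --- and the inequality $K(f_{\infty,i})\leq\liminf_{t\to\infty}K(f_t)$ would be vacuous. The remainder of your outline (semicontinuity of the dilatation, taking the maximum over $i$) matches the paper and is fine.
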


\begin{proof}
    Let $X_t=\bigcup_{i=1}^{n}X_{t,i}$ be the decomposition of $X_t$ as in \S\ref{TheDecomposition} and $Y_t=\bigcup_{i=1}^{n}Y_{t,i}$ be the corresponding decomposition of $Y_t$. The decomposition of the limit surfaces are $X_\infty=\bigcup_{i=1}^{n}X_{\infty,i}$ and $Y_\infty=\bigcup_{i=1}^{n}Y_{\infty,i}$. Under the singular flat metric induced by $e^{2t}q_t$, the subsurface $X_{t,i}\subset X_t$ can be isometrically embedded in $X_{\infty,i}$ while preserving the graph $\Gamma_{q,i}$. We treat $X_{t,i}$ as a subsurface of $X_{\infty,i}$.
    
    We select an appropriate horizontal segment $\tau_t$ for each minimal component of $X_t\setminus\Gamma_q$ and denote by  $\tau^\prime_t$ the corresponding horizontal segment on the corresponding minimal component of $Y_t\setminus\Gamma_{q^\prime}$. These segments are chosen such that the first return mappings on $\tau_t$ and $\tau^\prime_t$ coincide. Thus, we can obtain a sequence of surfaces along the Teichm\"uller ray $\mathcal{R}_{q,X}(t)$, still denote by $X_{t,i}$ for simplicity, which forms an exhaustion of the surface $X_{\infty,i}$. The corresponding sequence $Y_{t,i}$ also forms an exhaustion of the surface $Y_{\infty,i}$.

    Since the surfaces $X_{\infty,i}$ and $Y_{\infty,i}$ are of the same type, we have
    $$
    \chi(Y_{\infty,i})=\chi(X_{\infty,i})=2-2g_i-n_i,
    $$
    where $g_i$ is the genus of $X_{\infty,i}$ and $n_i$ is the number of punctures on $X_{\infty,i}$. If $g_i=0$ and $n_i\leq 3$, the Teichm\"uller space containing $X_{\infty,i}$ and $Y_{\infty,i}$ is trivial. Then the Teichm\"uller distance $d_{\mathcal{T}_i}(X_{\infty,i},Y_{\infty,i})=0$.

    We consider the case that the Teichm\"uller space containing $X_{\infty,i}$ and $Y_{\infty,i}$ is not trivial. Then there exist two non-trivial and non-peripheral simple closed curves $\alpha_i$ and $\beta_i$ on $X_{\infty,i}$ such that the intersection number $i(\alpha_i,\beta_i)\neq 0$. Recall that 
    $$
    \chi(\Gamma_{q,i})=\chi(X_{t,i})=\chi(X_{\infty,i})=2-2g_i-n_i.
    $$
    By the construction of $X_{\infty,i}$, the surface $X_{\infty,i}$ can shrink to the graph $\Gamma_{q,i}$. Then there exist two intersecting simple closed curves $\overline{\alpha}_i,\overline{\beta}_i\subset\Gamma_{q,i}$ consisting of saddle connections of $\Gamma_{q,i}$ such that $\overline{\alpha}_i$ and $\overline{\beta}_i$ are isotopic to $\alpha_i$ and $\beta_i$ on $X_{\infty,i}$, respectively. Since the vertical measured foliations $V(q)$ and $V(q^\prime)$ are topologically equivalent, there are two simple closed curves $\alpha^\prime_i$ and $\beta^\prime_i$ on $Y_{\infty,i}$ such that $\alpha^\prime_i$ and $\beta^\prime_i$ are homotopic to $f_t(\overline{\alpha}_i)$ and $f_t(\overline{\beta}_i)$, respectively. Similarly, there are two intersecting simple closed curves $\overline{\alpha}^\prime_i,\overline{\beta}^\prime_i\subset\Gamma_{q^\prime,i}$ consisting of saddle connections of $\Gamma_{q^\prime,i}$ such that $\overline{\alpha}^\prime_i$ and $\overline{\beta}^\prime_i$ are isotopic to $\alpha^\prime_i$ and $\beta^\prime_i$ on $Y_{\infty,i}$, respectively.

    Since $f_t(\overline{\alpha}_i)$ is homotopic to $\overline{\alpha}^\prime_i$ and $i(\overline{\alpha}^\prime_i,\overline{\beta}^\prime_i)\neq 0$, we obtain that the intersection $f_t(\overline{\alpha}_i)\cap\overline{\beta}^\prime_i$ is not empty. We pick a point $y_{t,i}\in f_t(\overline{\alpha}_i)\cap\overline{\beta}^\prime_i\subset\Gamma_{q^\prime,i}$, and let $x_{t,i}\in\Gamma_{q,i}$ be a point on $X_t$ such that $f_t(x_{t,i})=y_{t,i}$. Let $x_i\in\Gamma_{q,i}$ be a singularity of $X$ and $y_i\in\Gamma_{q^\prime,i}$ be a singularity of $Y$. Since the finite critical graphs $\Gamma_{q,i}$ and $\Gamma_{q^\prime,i}$ are preserved along the Teichm\"uller rays $\mathcal{R}_{q,X}(t)$ and $\mathcal{R}_{q^\prime,Y}(t)$, respectively, there is a constant $M>0$ such that  
    $$
    d_t(x_{t,i},x_i)<M \text{ and } d^\prime_t(y_{t,i},y_i)<M,
    $$
    where $d_t$ is the singular flat metric induced by $e^{2t}q_t$ on $X_t$ and $d^\prime_t$ is the singular flat metric induced by $e^{2t}q^\prime_t$ on $Y_t$. By Proposition \ref{TheGHconvergenceWithDifferentBasepoints}, the sequence $(X_t,x_{t,i})$ converges to $(X_{\infty,i},x_{\infty,i})$ in the sense of pointed Gromov-Hausdorff, where $x_{\infty,i}$ is a limit point of the sequence $x_{t,i}$ on $X_{\infty,i}$ as $t\to\infty$, and the sequence $(Y_t,y_{t,i})$ converges to $(Y_{\infty,i},y_{\infty,i})$ in the sense of pointed Gromov-Hausdorff, where $y_{\infty,i}$ is a limit point of the sequence $y_{t,i}$ on $Y_{\infty,i}$ as $t\to\infty$.

    Since $V(q)$ and $V(q^\prime)$ are absolutely continuous, the dilatation $K(f_t)$ of the Teichm\"uller mapping $f_t$ is bounded by the Theorem $3.2$ in \cite{Iva2001}. There is a subsequence of $f_t$, still denoted by $f_t$ for simplicity, satisfying 
    $$
    \lim_{t\to\infty}\frac{1}{2}\log K(f_t)=\liminf_{t\to\infty}d_{\mathcal{T}}(X_t,Y_t)=\frac{1}{2}\log K_{\infty}.
    $$
    Then for any $\varepsilon>0$, there is a $T>0$ such that for any $t>T$, $K(f_t)<K_{\infty}+\varepsilon$.

    We fix a $t_0>T$ and consider the normalized singular flat metric on each $X_t$ and $Y_t$. For any $t>t_0$, the subsurface $X_{t_0,i}\subset X_{t_0}$ can be isometrically embedded in $X_{t,i}\subset X_t$ while preserving the graph $\Gamma_{q,i}$. We show that there is a $T^\prime>t_0$ such that for any $t>T^\prime$, the image of $X_{t_0,i}$ under the Teichm\"uller mapping $f_t:X_t\to Y_t$ is contained in $Y_{t,i}\subset Y_t$, that is $f_t(X_{t_0,i})\subset Y_{t,i}\subset Y_t$. 

    We treat $X_{t_0,i}$ as a subsurface of $X_{t,i}\subset X_t$. By contradiction, assume that there always exists a sufficiently large $t>t_0$ such that $f_t(X_{t_0,i})$ is not contained in $Y_{t,i}$. Then, there is a point $\overline{x}\in\partial X_{t_0,i}$ such that $f_t(\overline{x})\notin Y_{t,i}$. Let $\beta$ be the geodesic arc connecting $x_{t,i}\in\Gamma_{q,i}$ and $\overline{x}\in\partial X_{t_0,i}$ under the normalized singular flat metric. Thus, the arc $f_t(\gamma)$ joining $y_{t,i}=f_t(x_{t,i})\in\Gamma_{q^\prime,i}$ and $f_t(\overline{x})$ intersects the boundary of $Y_{t,i}$. We can choose an annulus $A$ in $Y_{t,i}$ such that the boundary of $A$ is isotopic to the boundary of $Y_{t,i}$, and the arc $f_t(\gamma)$ traverses the annulus $A$. Let $Q$ be a quadrilateral in $X_{t_0,i}$ that contains the geodesic arc $\beta$, and $\beta$ connects a pair of opposite edges of $Q$. Let $\Gamma$ be the family of curves isotopic to $\beta$ in $Q$ that connect the pair of opposite edges of $Q$. We can choose the pair of opposite edges of $Q$ containing $x_{t,i}$ and $\overline{x}$ respectively to be sufficiently small such that the family of curves $f_t(\Gamma)$ traverses the annulus $A$. Then, by the quasiconformal distortion property of extremal length, we have
    \begin{equation}\label{lambda_{Y_t}(f_t(Gamma))}
        \frac{1}{K(f_t)}\lambda_{X_t}(\Gamma)\leq\lambda_{Y_t}(f_t(\Gamma))\leq K(f_t)\lambda_{X_t}(\Gamma).
    \end{equation}
    Since the extremal length of $\Gamma$ is independent of the domain containing the $\Gamma$, there is 
    \begin{equation}\label{lambda_{X_{t_0,i}}(Gamma)}
        \lambda_{X_t}(\Gamma)=\lambda_{X_{t_0,i}}(\Gamma).
    \end{equation}
    Let $\Gamma^\prime$ be the restriction of $f_t(\Gamma)$ in $A\subset Y_{t,i}$. Thus, each curve $\gamma\in f_t(\Gamma)$ contains a curve $\gamma^\prime\in\Gamma^\prime$. Then we have 
    $$
    \lambda_{Y_t}(f_t(\Gamma))\geq \lambda_{Y_t}(\Gamma^\prime).
    $$
    Let $\Gamma^{\prime\prime}$ be the family of curves connecting the two boundaries of $A$. Then there is $\Gamma^\prime\subset\Gamma^{\prime\prime}$, and 
    $$
    \lambda_{Y_t}(\Gamma^\prime)\geq\lambda_{Y_t}(\Gamma^{\prime\prime})=\lambda_A(\Gamma^{\prime\prime}).
    $$
    By the definition of extremal length, we have 
    $$
    \lambda_A(\Gamma^{\prime\prime})\geq\frac{\inf_{\gamma^{\prime\prime}\in\Gamma^{\prime\prime}}\ell_{\rho_t}(\gamma^{\prime\prime})^2}{\mathrm{Area}_{\rho_t}(A)}=\mathrm{Mod}(A),
    $$
    where $\rho_t$ is the singular flat metric induced by $e^{2t}q^\prime_t$ on $Y_t$, and $\mathrm{Mod}(A)$ is the modulus of the annulus $A$. The $A$ can be conformally mapped to an annulus $A_{r_1,r_2}=\{z\in\mathbb{C}\mid r_1\leq |z|\leq r_2\}$, and the modulus of $A$ is defined by $\frac{1}{2\pi}\log\frac{r_2}{r_1}$. Since the dilatation $K(f_t)$ is uniformly bounded, for sufficiently large $t>t_0$, we can choose the annulus $A$ in $Y_{t,i}$ such that 
    $$
    \mathrm{Mod}(A)>K(f_t)\lambda_{X_{t_0,i}}(\Gamma).
    $$
    This is a contradiction to \eqref{lambda_{Y_t}(f_t(Gamma))} and \eqref{lambda_{X_{t_0,i}}(Gamma)}. Then, there is a $T^\prime>t_0$ such that for any $t>T^\prime$, we have $f_t(X_{t_0,i})\subset Y_{t,i}\subset Y_t$.

    Since the surface $Y_{t,i}$ can be isometrically embedded into $Y_{\infty,i}$ under the normalized singular flat metrics, then the Teichm\"uller mapping $f_t:X_t\to Y_t$ induces a quasiconformal mapping from $X_{t_0,i}\subset X_{\infty,i}$ into $Y_{\infty,i}$ for any $t>T^\prime$. We still denote by $f_t$ the quasiconformal mapping from $X_{t_0,i}\subset X_{\infty,i}$ into $Y_{\infty,i}$ for simplicity. Assume that for any $t>T^\prime$, there exists a $t_1>T^\prime$ such that $f_{t_1}(X_{t_0,i})\nsubseteq Y_{t,i}$. By applying a similar argument as before, we arrive at a contradiction. Therefore, there exists a $t^\prime>T^\prime$ such that for all $t>t^\prime$, we have $f_t(X_{t_0,i})\subset Y_{t^\prime,i}\subset Y_{\infty,i}$. Then, there is a sequence of quasiconformal mappings $f_t$ from $X_{t_0,i}\subset X_{\infty,i}$ into $Y_{\infty,i}$, which forms a normal family. We can obtain a subsequence of $f_t$ that converges to a quasiconformal mapping from $X_{t_0,i}\subset X_{\infty,i}$ into $Y_{\infty,i}$.

    Let $C_{t_0}$ be a simple closed curve on the boundary of $X_{t_0,i}$ enclosing a puncture of $X_{\infty,i}$. The simple closed curve $C_{t_0}$ shrinks to the puncture of $X_{\infty,i}$ as $t_0\to\infty$. Then the simple closed curve $f_t(C_{t_0})$ also shrinks to a puncture of $Y_{\infty,i}$. Let $t_0$ tend to infinity. By Cantor diagonalization process, we can obtain a subsequence of $f_t$ which converges to a quasiconformal homeomorphism $f_{\infty,i}:X_{\infty,i}\to Y_{\infty,i}$ uniformly on any compact
    subset of $X_{\infty,i}$, and the dilatation $K(f_{\infty,i})<K_\infty+\varepsilon$. 

    Therefore, we can obtain a quasiconformal mapping $f_\infty:X_\infty\to Y_\infty$ induced by $f_t:X_t\to Y_t$. For each subsurface $X_{\infty,i}$ of $X_\infty$, if $g_i=0$ and $n_i\leq 3$, the restriction $f_\infty\big|_{X_{\infty,i}}$ is a conformal mapping from $X_{\infty,i}$ to $Y_{\infty,i}$. Otherwise, the restriction $f_\infty\big|_{X_{\infty,i}}=f_{\infty,i}$. Moreover, by the arbitrariness of $\varepsilon$ and the definition of Teichm\"uller distance between $X_\infty$ and $Y_\infty$, we can get that 
    $$
    \liminf_{t\to\infty}d_{\mathcal{T}}(X_t,Y_t)\geq d_{\overline{\mathcal{T}}}(X_\infty,Y_\infty).
    $$ 
\end{proof}

\begin{proof}[\textbf{Proof of Theorem} \ref{TheLimitDistance}]
    If the vertical measured foliations $V(q)$ and $V(q^\prime)$ are absolutely continuous, by Lemma \ref{LowerEstimate-Modulus} and Lemma \ref{LowerEstimate-LimitingSurfaces}, we obtain a lower estimate of the limiting Teichm\"uller distance, that is
    $$
    \liminf_{t\to\infty}d_{\mathcal{T}}(X_t,Y_t)\geq\max\left\{\frac{1}{2}\log\max_{1\leq j\leq N}\left\{\frac{m^\prime_j}{m_j},\frac{m_j}{m^\prime_j}\right\},d_{\overline{\mathcal{T}}}(X_\infty,Y_\infty)\right\}.
    $$
    Together with Lemma \ref{UpperEstimate}, we can get the desired equation.
    $$
    \lim_{t\to\infty}d_{\mathcal{T}}(X_t,Y_t)=\max\left\{\frac{1}{2}\log\max_{1\leq j\leq N}\left\{\frac{m^\prime_j}{m_j},\frac{m_j}{m^\prime_j}\right\},d_{\overline{\mathcal{T}}}(X_\infty,Y_\infty)\right\}.
    $$
    If $V(q)$ and $V(q^\prime)$ are not absolutely continuous, by the results in \cite{Iva2001} and \cite{LM2010}, the Teichm\"uller distance $d_{\mathcal{T}}(X_t,Y_t)$ tends to infinity as $t\to\infty$.
\end{proof}

\begin{proof}[\textbf{Proof of Corollary} \ref{TheAsymptoticCondition}]
    Under the assumption of Theorem \ref{TheLimitDistance}, If the two Teichm\"uller rays $\mathcal{R}_{q,X}(t)$ and $\mathcal{R}_{q^\prime,Y}(t)$ are asymptotic, we can assume that
    $$
    \lim_{t\to\infty}d_{\mathcal{T}}(X_t,Y_t)=0.
    $$
    By Theorem \ref{TheLimitDistance}, we get that $d_{\overline{\mathcal{T}}(X_\infty,Y_\infty)}=0$ and $m_j^\prime=m_j$ for any $j=1,\cdots,N$. Then the vertical measured foliations $V(q)$ and $V(q^\prime)$ are modularly equivalent and $X_\infty=Y_\infty$.

    Conversely, if $X_\infty=Y_\infty$ and $V(q)$ and $V(q^\prime)$ are modularly equivalent, the Teichm\"uller distance $d_{\overline{\mathcal{T}}(X_\infty,Y_\infty)}=0$, and there is constant $C>0$ such that $m_j^\prime=Cm_j$ for any $j=1,\cdots,N$. Then for $\sigma=-\frac{1}{2}\log C$,
    $$
    \lim_{t\to\infty}d_{\mathcal{T}}(X_t,Y_{t+\sigma})=\frac{1}{2}\log\max_{1\leq j\leq N}\left\{\frac{e^{2\sigma}m^\prime_j}{m_j},\frac{m_j}{e^{2\sigma}m^\prime_j}\right\}=0.
    $$
    This shows that the Teichm\"uller rays $\mathcal{R}_{q,X}(t)$ and $\mathcal{R}_{q^\prime,Y}(t)$ are asymptotic.
\end{proof}

\begin{proof}[\textbf{Proof of Corollary} \ref{Masur'sTheorem}]
    By the main theorem of \cite{HM1979}, there is a quadratic differential $q^\prime$ on $Y$ such that the vertical measured foliations $V(q^\prime)$ and $V(q)$ are modularly equivalent. Then there is a Teichm\"uller ray $\mathcal{R}_{q^\prime,Y}(t)$ starting from $Y$. Since the finite critical graph $\Gamma_q$ contains no simple closed curves, this implies that each component $X_{q,i}$ of the limit surface $X_\infty$ of $\mathcal{R}_{q,X}(t)$ is simply connected and has a puncture. The same case holds for the limit surface $Y_\infty$ of $\mathcal{R}_{q^\prime,Y}(t)$. Then we can obtain that 
    $$
    d_{\overline{\mathcal{T}}}(X_\infty,Y_\infty)=0.
    $$
    By Corollary \ref{TheAsymptoticCondition}, the Teichm\"uller rays $\mathcal{R}_{q,X}(t)$ and $\mathcal{R}_{q^\prime,Y}(t)$ are asymptotic.
\end{proof}

\section{Minimum value of the limiting Teichm\"uller distance}

The limit of the Teichm\"uller distance between two Teichm\"uller rays is related to the distance between two limit surfaces and the ratio of moudulus for the vertical measured foliations on the initial surfaces. In this section, we shift the initial points along the Teichm\"uller rays and obtain the minimum value of the limiting Teichm\"uller distance. The minimum value can be represented by the detour metric $\delta$ between the endpoints of the Teichm\"uller rays on the Gardiner-Masur boundary of $\mathcal{T}(S)$.

\subsection{The Gardiner-Masur boundary and the horofunction boundary}

We recall the Gardiner-Masur compactification of Teichm\"uller space $\mathcal{T}(S)$. Define the mapping 
\begin{equation*}
    \begin{array}{cccl}
        \varphi: & \mathcal{T}(S) & \to & \mathbb{R}_{\geq 0}^{\mathcal{S}}  \\
        & X & \mapsto & \left\{\mathrm{Ext}_X(\alpha)^{\frac{1}{2}}\right\}_{\alpha\in\mathcal{S}}. 
    \end{array}
\end{equation*}
Let $\pi:\mathbb{R}_{\geq 0}^{\mathcal{S}}\setminus\{0\}\to P\mathbb{R}_{\geq 0}^{\mathcal{S}}$ be the natural projection. Gardiner and Masur \cite{GM1991} showed that the composition $\varPhi=\pi\circ\varphi:\mathcal{T}(S)\to P\mathbb{R}_{\geq 0}^{\mathcal{S}}$ is an embedding and the closure $\overline{\varPhi(\mathcal{T}(S))}$ is compact. The closure $\overline{\varPhi(\mathcal{T}(S))}$ is called the Gardiner-Masur compactification of $\mathcal{T}(S)$ denoted by $\overline{\mathcal{T}(S)}^{GM}$ and the boundary of $\overline{\varPhi(\mathcal{T}(S))}$ is called the Gardiner-Masur boundary denoted by $\partial_{GM}\mathcal{T}(S)$.

The horofunction compactification of a metric space is introduced by Gromov in \cite{Gro1981}. We also refer to \cite{Wal2019} for more details. Let $(M,d)$ be a proper geodesic metric space which means that under the metric $d$, any closed ball is compact and each pair of points in $M$ is joined by a geodesic segment. Choose a basepoint $b\in M$, and for each point $z\in M$, we can define a function $\psi_z:M\to\mathbb{R}$ given by
$$
\psi_z(x):=d(x,z)-d(b,z), \text{ for any } x\in M.
$$
Let $C(M)$ be the space of continuous functions on $M$, which is endowed with the topology of uniform convergence on any compact subset of $M$. Then the mapping $\varPsi:M\to C(M)$ given by $\varPsi(z):=\psi_z$ is an embedding. The closure $\overline{\varPsi(M)}$ is compact in $C(M)$, which is called the horofunction compactification of $M$. The boundary of $\overline{\varPsi(M)}$ is called the horofunction boundary of $M$. We denote by $\partial_{hor}M$ the horofunction boundary of $M$, and call $\xi\in\partial_{hor}M$ a horofunction.

It is known that the Teichm\"uller space $\mathcal{T}(S)$ with $d_{\mathcal{T}}$ is a proper geodesic metric space. We can consider the horofunction compactification of $\mathcal{T}(S)$ which is denoted by $\overline{\mathcal{T}(S)}^{hor}$. The horofunction boundary of $\mathcal{T}(S)$ is denoted by $\partial_{hor}\mathcal{T}(S)$. Liu and Su \cite{LS2014} showed that the horofunction compactification of Teichm\"uller space with the Teichm\"uller metric is homeomorphic to the Gardiner-Masur compactification. This is also proved by Walsh in \cite{Wal2019}. Then we can treat the Gardiner-Masur compactification of Teichm\"uller space as the horofunction compactification.

\subsection{The detour metric}

We recall the detour metric $\delta$ which is defined on a subset of the horofunction boundary of the metric space $(M,d)$ that consists of the horofunctions called Busemann points. 

Let $\gamma:E\to M$ be a mapping into the metric space $(M,d)$, where $E$ is an unbounded subset of $\mathbb{R}_{\geq 0}$ containing $0$. The mapping $\beta$ is called an almost-geodesic ray on $M$ if for any $\varepsilon>0$, there exists a $T\geq 0$ such that 
$$
|d(\gamma(0),\gamma(s))+d(\gamma(s),\gamma(t))-t|<\varepsilon,
$$
for any $s,t\in E$ with $t\geq s\geq T$. Rieffel \cite{Rie2002} proved that every almost-geodesic ray of $(M,d)$ converges to a point in $\partial_{hor}M$. A horofunction which is the limit of an almost-geodesic ray is called a Busemann point in $\partial_{hor}M$. We denote by $\partial_{B}M$ the subset of $\partial_{hor}M$ consisting of all Busemann points.

For any two horofunctions $\xi,\eta\in\partial_{hor}M$, the detour cost is defined as 
$$
H(\xi,\eta):=\sup_{W\ni\xi}\inf_{x\in W}\left(d(b,x)+\eta(x)\right),
$$
where $W$ takes over all neighborhoods of $\xi$ in the horofunction compactification of $(M,d)$. There is an equivalent definition, that is
$$
H(\xi,\eta):=\inf_{\gamma}\liminf_{t\to\infty}\left(d(b,\gamma(t))+\eta(\gamma(t))\right),
$$
where the infimum is taken over all paths $\gamma:\mathbb{R}_{\geq 0}\to M$ converging to $\xi$. Walsh \cite{Wal2014} showed that the symmetrization of detour cost satisfies the axiom of the distance on $\partial_{B}M$. Then for any $\xi,\eta\in\partial_{B}M$, we can define the detour metric as
$$
\delta(\xi,\eta)=H(\xi,\eta)+H(\eta,\xi).
$$
The detour metric $\delta$ may take the value $+\infty$.

Let $\mathcal{R}_{q,X}(t)$ be a Teichm\"uller ray. Recall that for any $\mathcal{F}\in\mathcal{MF}(S)$, 
$$
\mathcal{E}_{q,X}(\mathcal{F})=\left\{\sum_{j=1}^{N}\frac{a_ji(G_j,\mathcal{F})^2}{i(G_j,H(q))}\right\}^{\frac{1}{2}}.
$$
By Theorem \ref{limitofExt}, the Teichm\"uller ray $\mathcal{R}_{q,X}(t)$ converges to the function $\hat{\mathcal{E}}_{q,X}=\pi\circ\mathcal{E}_{q,X}:\mathcal{MF}(S)\to P\mathbb{R}_{\geq 0}^{\mathcal{S}}$ in the Gardiner-Masur compactification of $\mathcal{T}(S)$ (see the Corollary $1$ in \cite{Wal2019}).

We denote by $\partial_B\mathcal{T}(S)$ the subset of $\partial_{hor}\mathcal{T}(S)$ which consists of Busemann points. It is obvious that any Teichm\"uller ray is an almost-geodesic ray in $\mathcal{T}(S)$. Since $\overline{\mathcal{T}(S)}^{hor}$ is homeomorphic to $\overline{\mathcal{T}(S)}^{GM}$, we regard the limit $\hat{\mathcal{E}}_{q,X}$ of $\mathcal{R}_{q,X}(t)$ in $\overline{\mathcal{T}(S)}^{GM}$ as the corresponding Busemann point in $\partial_B\mathcal{T}(S)$.

\begin{proposition}[\cite{Ama2014}]\label{DetourMetric} 
    Let $\mathcal{R}_{q,X}(t)$ and $\mathcal{R}_{q^\prime,Y}(t)$ be two Teichm\"uller rays. If $V(q)=\sum_{j=1}^{N}a_jG_j$ and $V(q^\prime)=\sum_{j=1}^{N}b_jG_j$ are absolutely continuous, then the detour metric between $\hat{\mathcal{E}}_{q,X}$ and $\hat{\mathcal{E}}_{q^\prime,Y}$ is represented by 
    $$
    \delta(\hat{\mathcal{E}}_{q,X},\hat{\mathcal{E}}_{q^\prime,Y})=\frac{1}{2}\log\max_{1\leq j\leq N}\frac{m_j^\prime}{m_j}+\frac{1}{2}\log\max_{1\leq j\leq N}\frac{m_j}{m_j^\prime}.
    $$
    If $V(q)$ and $V(q^\prime)$ are not absolutely continuous, then $\delta(\hat{\mathcal{E}}_{q,X},\hat{\mathcal{E}}_{q^\prime,Y})=+\infty$.
\end{proposition}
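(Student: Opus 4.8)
\emph{The plan is to} compute the detour metric directly as the sum of the two detour costs $H(\hat{\mathcal{E}}_{q,X},\hat{\mathcal{E}}_{q^\prime,Y})$ and $H(\hat{\mathcal{E}}_{q^\prime,Y},\hat{\mathcal{E}}_{q,X})$ \emph{with respect to the single basepoint} $b=X$ (the detour metric on Busemann points being basepoint-independent by Walsh \cite{Wal2014}, this is legitimate). Since every Teichm\"uller ray is an almost-geodesic whose Gardiner--Masur limit is the associated Busemann point, I first record the explicit form of these horofunctions. Combining Kerckhoff's formula (Theorem \ref{Kerckhoff'sFormula}) with Theorem \ref{limitofExt}, for any $Z\in\mathcal{T}(S)$ the horofunction $\hat{\mathcal{E}}_{q^\prime,Y}(Z)=\lim_{t\to\infty}\left(d_{\mathcal{T}}(Z,Y_t)-d_{\mathcal{T}}(X,Y_t)\right)$ equals $\log\sup_{\mathcal{F}}\frac{\mathcal{E}_{q^\prime,Y}(\mathcal{F})}{\mathrm{Ext}_Z(\mathcal{F})^{1/2}}-\log\sup_{\mathcal{F}}\frac{\mathcal{E}_{q^\prime,Y}(\mathcal{F})}{\mathrm{Ext}_X(\mathcal{F})^{1/2}}$, the second term being the normalization ensuring $\hat{\mathcal{E}}_{q^\prime,Y}(X)=0$; the analogous formula holds for $\hat{\mathcal{E}}_{q,X}$ with $(q,X)$ and $(q^\prime,Y)$ exchanged.

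Next I would evaluate the first detour cost along its defining ray. Since $\hat{\mathcal{E}}_{q,X}$ is the Busemann point realized by $\mathcal{R}_{q,X}$, the infimum defining the detour cost is attained along this ray (Rieffel \cite{Rie2002}, Walsh \cite{Wal2014}), so $H(\hat{\mathcal{E}}_{q,X},\hat{\mathcal{E}}_{q^\prime,Y})=\lim_{t\to\infty}\left(d_{\mathcal{T}}(X,X_t)+\hat{\mathcal{E}}_{q^\prime,Y}(X_t)\right)$, where $d_{\mathcal{T}}(X,X_t)=t$ as the ray is unit speed. By Theorem \ref{limitofExt} one has $\mathrm{Ext}_{X_t}(\mathcal{F})^{1/2}\sim e^{t}\mathcal{E}_{q,X}(\mathcal{F})$, and interchanging the supremum with the limit gives $\log\sup_{\mathcal{F}}\frac{\mathcal{E}_{q^\prime,Y}(\mathcal{F})}{\mathrm{Ext}_{X_t}(\mathcal{F})^{1/2}}=-t+\log\sup_{\mathcal{F}}\frac{\mathcal{E}_{q^\prime,Y}(\mathcal{F})}{\mathcal{E}_{q,X}(\mathcal{F})}+o(1)$. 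Writing $A=\sup_{\mathcal{F}}\mathcal{E}_{q^\prime,Y}(\mathcal{F})/\mathrm{Ext}_X(\mathcal{F})^{1/2}$, which is finite in the absolutely continuous case by the remark following Lemma \ref{ratioofE}, the $t$-terms cancel and I obtain $H(\hat{\mathcal{E}}_{q,X},\hat{\mathcal{E}}_{q^\prime,Y})=\log\sup_{\mathcal{F}}\frac{\mathcal{E}_{q^\prime,Y}(\mathcal{F})}{\mathcal{E}_{q,X}(\mathcal{F})}-\log A$. A parallel computation along $\mathcal{R}_{q^\prime,Y}$, still with basepoint $X$, uses $d_{\mathcal{T}}(X,Y_t)=t+\log A+o(1)$ (again Kerckhoff plus Theorem \ref{limitofExt}) together with the normalization identity $\sup_{\mathcal{F}}\mathcal{E}_{q,X}(\mathcal{F})^2/\mathrm{Ext}_X(\mathcal{F})=1$, and yields $H(\hat{\mathcal{E}}_{q^\prime,Y},\hat{\mathcal{E}}_{q,X})=\log\sup_{\mathcal{F}}\frac{\mathcal{E}_{q,X}(\mathcal{F})}{\mathcal{E}_{q^\prime,Y}(\mathcal{F})}+\log A$. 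The identity just used holds because $\mathcal{E}_{q,X}(\mathcal{F})^2=\lim_t e^{-2t}\mathrm{Ext}_{X_t}(\mathcal{F})\le\mathrm{Ext}_X(\mathcal{F})$ by Kerckhoff, with equality at $\mathcal{F}=H(q)$, where $\mathcal{E}_{q,X}(H(q))^2=i(V(q),H(q))=\|q\|=1=\mathrm{Ext}_X(H(q))$.

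Adding the two detour costs, the terms $\pm\log A$ cancel, leaving $\delta(\hat{\mathcal{E}}_{q,X},\hat{\mathcal{E}}_{q^\prime,Y})=\log\sup_{\mathcal{F}}\frac{\mathcal{E}_{q^\prime,Y}(\mathcal{F})}{\mathcal{E}_{q,X}(\mathcal{F})}+\log\sup_{\mathcal{F}}\frac{\mathcal{E}_{q,X}(\mathcal{F})}{\mathcal{E}_{q^\prime,Y}(\mathcal{F})}$. By Lemma \ref{ratioofE} the two suprema are $\big(\max_j m_j^\prime/m_j\big)^{1/2}$ and $\big(\max_j m_j/m_j^\prime\big)^{1/2}$, which gives exactly $\tfrac12\log\max_j\frac{m_j^\prime}{m_j}+\tfrac12\log\max_j\frac{m_j}{m_j^\prime}$, the claimed formula. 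For the non--absolutely--continuous case, Lemma \ref{ratioofE} shows that at least one of $\sup_{\mathcal{F}}\mathcal{E}_{q^\prime,Y}^2/\mathcal{E}_{q,X}^2$ and $\sup_{\mathcal{F}}\mathcal{E}_{q,X}^2/\mathcal{E}_{q^\prime,Y}^2$ equals $+\infty$, so the corresponding detour cost, and hence $\delta$, is $+\infty$.

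The hard part will be justifying the interchange of $\sup_{\mathcal{F}}$ with $\lim_{t}$ used in the two detour-cost computations: Theorem \ref{limitofExt} only provides pointwise convergence of $e^{-2t}\mathrm{Ext}_{X_t}(\mathcal{F})$ to $\mathcal{E}_{q,X}(\mathcal{F})^2$ for each fixed $\mathcal{F}$, whereas the argument needs this convergence to be uniform over $\mathcal{F}$ in the compact projective space $P\mathcal{MF}(S)$ so that the suprema converge. I would obtain the uniformity from equicontinuity: the functions $\mathcal{F}\mapsto e^{-2t}\mathrm{Ext}_{X_t}(\mathcal{F})^{1/2}$ are uniformly Lipschitz on $P\mathcal{MF}(S)$ (extremal-length half-powers satisfy a uniform Lipschitz bound), and pointwise convergence of equicontinuous functions on a compact space is uniform. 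The secondary point, that the infimum in the detour cost is genuinely attained along the defining ray rather than along some more efficient path, I would invoke from the almost-geodesic theory of Rieffel \cite{Rie2002} and Walsh \cite{Wal2014} rather than reprove.
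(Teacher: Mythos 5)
Your proposal is correct in substance, but note first what it is being compared against: the paper does not prove Proposition \ref{DetourMetric} at all --- it is imported wholesale from Amano \cite{Ama2014}. So you have supplied a proof where the paper supplies a citation, and your route is essentially the standard Amano--Walsh computation, assembled from exactly the toolkit the paper itself sets up: the almost-geodesic evaluation of detour cost $H(\xi,\eta)=\lim_{t\to\infty}\bigl(d(b,\gamma(t))+\eta(\gamma(t))\bigr)$ from \cite{Rie2002,Wal2014}, the explicit Liu--Su/Walsh horofunctions obtained by combining Theorem \ref{Kerckhoff'sFormula} with Theorem \ref{limitofExt}, and Lemma \ref{ratioofE} to identify the two suprema with $\bigl(\max_j m_j^\prime/m_j\bigr)^{1/2}$ and $\bigl(\max_j m_j/m_j^\prime\bigr)^{1/2}$. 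Your verification of the normalization identity $\sup_{\mathcal{F}}\mathcal{E}_{q,X}(\mathcal{F})^2/\mathrm{Ext}_X(\mathcal{F})=1$ is also sound: Kerckhoff along the ray gives $e^{-2t}\mathrm{Ext}_{X_t}(\mathcal{F})\leq\mathrm{Ext}_X(\mathcal{F})$, and equality at $\mathcal{F}=H(q)$ uses $\mathrm{Ext}_X(H(q))=\|q\|=1=i(V(q),H(q))$ together with additivity of $i(\cdot,H(q))$ over the disjointly supported $G_j$ --- all legitimate. The bookkeeping of the $\pm\log A$ terms and their cancellation in $\delta=H(\hat{\mathcal{E}}_{q,X},\hat{\mathcal{E}}_{q^\prime,Y})+H(\hat{\mathcal{E}}_{q^\prime,Y},\hat{\mathcal{E}}_{q,X})$ checks out, as does the basepoint-independence appeal.

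The one genuinely soft step is your uniformity argument. The claim that ``extremal-length half-powers satisfy a uniform Lipschitz bound'' is not usable as written: you specify no metric on $P\mathcal{MF}(S)$, and the Lipschitz constant would have to be uniform in $t$, which is precisely what needs proving. There is a cleaner fix available from ingredients you already invoked: since $d_{\mathcal{T}}(X_t,X_{t+s})=s$, Kerckhoff's formula gives $\mathrm{Ext}_{X_{t+s}}(\mathcal{F})\leq e^{2s}\,\mathrm{Ext}_{X_t}(\mathcal{F})$, so $t\mapsto e^{-2t}\mathrm{Ext}_{X_t}(\mathcal{F})$ is non-increasing and decreases pointwise to the continuous limit $\mathcal{E}_{q,X}(\mathcal{F})^2$ of Theorem \ref{limitofExt}; Dini's theorem on the compact section $\{\mathcal{F}\in\mathcal{MF}(S)\mid\mathrm{Ext}_X(\mathcal{F})=1\}$ then upgrades this to uniform convergence, which is all that the asymptotic $d_{\mathcal{T}}(X,Y_t)=t+\log A+o(1)$ requires. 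Better still, for the two horofunction evaluations the monotonicity alone suffices: $e^{-t}\mathrm{Ext}_{X_t}(\mathcal{F})^{1/2}\geq\mathcal{E}_{q,X}(\mathcal{F})$ for every $t$ gives the upper bound $\sup_{\mathcal{F}}\mathcal{E}_{q^\prime,Y}(\mathcal{F})/\bigl(e^{-t}\mathrm{Ext}_{X_t}(\mathcal{F})^{1/2}\bigr)\leq\sup_{\mathcal{F}\notin Z}\mathcal{E}_{q^\prime,Y}(\mathcal{F})/\mathcal{E}_{q,X}(\mathcal{F})$ for free, while the matching lower bound needs only pointwise convergence at a fixed $\mathcal{F}_0$. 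This last observation also tightens your non--absolutely--continuous case, where only the lower bound is needed (together with the finiteness $A\leq e^{d_{\mathcal{T}}(X,Y)}$, which follows from $\mathcal{E}_{q^\prime,Y}(\mathcal{F})^2\leq\mathrm{Ext}_Y(\mathcal{F})$ and Kerckhoff); with that repair the argument is complete.
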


Then we give the proof of Proposition \ref{InfimumofLimitDistance}.

\begin{proof}[\textbf{Proof of Proposition} \ref{InfimumofLimitDistance}]
    Under the assumption of Theorem \ref{TheLimitDistance}, if the vertical measured foliations $V(q)$ and $V(q^\prime)$ are absolutely continuous, by Proposition \ref{DetourMetric}, we get that 
    \begin{align*}
        \frac{1}{2}\log\max_{1\leq j\leq N}\left\{\frac{m^\prime_j}{m_j},\frac{m_j}{m^\prime_j}\right\} & \geq \frac{1}{2}\log\left(\max_{1\leq j\leq N}\left(\frac{m^\prime_j}{m_j}\right)^{\frac{1}{2}}\cdot\max_{1\leq j\leq N}\left(\frac{m_j}{m^\prime_j}\right)^{\frac{1}{2}}\right) \\
        & =\frac{1}{2}\left(\frac{1}{2}\log\max_{1\leq j\leq N}\frac{m_j^\prime}{m_j}+\frac{1}{2}\log\max_{1\leq j\leq N}\frac{m_j}{m_j^\prime}\right) \\
        & =\frac{1}{2}\delta(\hat{\mathcal{E}}_{q,X},\hat{\mathcal{E}}_{q^\prime,Y}).
    \end{align*}
    The detour metric $\delta(\hat{\mathcal{E}}_{q,X},\hat{\mathcal{E}}_{q^\prime,Y})$ and $d_{\overline{\mathcal{T}}}(X_\infty,Y_\infty)$ are independent of the initial points of the Teichm\"uller rays. Therefore, by Theorem \ref{TheLimitDistance}, 
    $$
    \lim_{t\to\infty}d_{\mathcal{T}}(X_t,Y_{t+\sigma})\geq\max\left\{\frac{1}{2}\delta(\hat{\mathcal{E}}_{q,X},\hat{\mathcal{E}}_{q^\prime,Y}),d_{\overline{\mathcal{T}}}(X_\infty,Y_\infty)\right\},
    $$
    for any $\sigma\in\mathbb{R}$. We can choose the $\sigma$ as
    $$
    \sigma=\frac{1}{4}\log\frac{\max_{1\leq j\leq N}\frac{m_j}{m^\prime_j}}{\max_{1\leq j\leq N}\frac{m^\prime_j}{m_j}}.
    $$
    Thus, 
    \begin{align*}
        \max_{1\leq j\leq N}\frac{e^{2\sigma}m^\prime_j}{m_j} & =\max_{1\leq j\leq N}\left\{\frac{\left(\max_{1\leq j\leq N}\frac{m_j}{m^\prime_j}\right)^{\frac{1}{2}}\cdot m_j^\prime}{\left(\max_{1\leq j\leq N}\frac{m^\prime_j}{m_j}\right)^{\frac{1}{2}}\cdot m_j}\right\} \\
        & =\left(\max_{1\leq j\leq N}\frac{m_j}{m^\prime_j}\right)^{\frac{1}{2}}\cdot\left(\max_{1\leq j\leq N}\frac{m^\prime_j}{m_j}\right)^{\frac{1}{2}} \\
        & =\max_{1\leq j\leq N}\frac{m_j}{e^{2\sigma}m^\prime_j}.
    \end{align*}
    Therefore, we obtain that
    \begin{align*}
        \lim_{t\to\infty}d_{\mathcal{T}}(X_t,Y_{t+\sigma}) & =\max\left\{\frac{1}{2}\log\max_{1\leq j\leq N}\left\{\frac{e^{2\sigma}m^\prime_j}{m_j},\frac{m_j}{e^{2\sigma}m^\prime_j}\right\},d_{\overline{\mathcal{T}}}(X_\infty,Y_\infty)\right\} \\
        & =\max\left\{\frac{1}{2}\left(\frac{1}{2}\log\max_{1\leq j\leq N}\frac{m_j^\prime}{m_j}+\frac{1}{2}\log\max_{1\leq j\leq N}\frac{m_j}{m_j^\prime}\right),d_{\overline{\mathcal{T}}}(X_\infty,Y_\infty)\right\} \\
        & =\max\left\{\frac{1}{2}\delta(\hat{\mathcal{E}}_{q,X},\hat{\mathcal{E}}_{q^\prime,Y}),d_{\overline{\mathcal{T}}}(X_\infty,Y_\infty)\right\}.
    \end{align*}
\end{proof}

\bibliography{Reference}
\bibliographystyle{abbrvnat-nourl}

\end{document}